\titleformat{\section}[hang]{\center\Large\bf}{\thesection.}{0.5cm}{}
\DeclareSymbolFont{cyrletters}{OT2}{wncyr}{m}{n}
\DeclareMathSymbol{\Sha}{\mathalpha}{cyrletters}{"58}
\DeclareMathSymbol{\Brusse}{\mathalpha}{cyrletters}{"42}
\theoremstyle{plain}
\newtheorem{theorem}{Th\'eor\`eme}[section]
\newtheorem{lemma}[theorem]{Lemme}
\newtheorem{proposition}[theorem]{Proposition}
\newtheorem{corollary}[theorem]{Corollaire}
\newtheorem{definition}[theorem]{D\'efinition}
\theoremstyle{definition}
\newtheorem{remarque}[theorem]{Remarque}
\newtheorem{example}[theorem]{Exemple}
\newtheoremstyle{hypo}  
  {\topsep}   
  {\topsep}   
  {\itshape}  
  {1.5ex}       
  {\bfseries} 
  {)}         
  {8pt plus 1pt minus 1pt}  
  {}          
\theoremstyle{hypo}
\newtheoremstyle{hypol}  
  {\topsep}   
  {\topsep}   
  {\itshape}  
  {1.5ex}       
  {\bfseries} 
  {)$_{\ell}$}         
  {8pt plus 1pt minus 1pt}  
  {}          
\theoremstyle{hypol}
\newtheoremstyle{DP}  
  {\topsep}   
  {\topsep}   
  {\itshape}  
  {}       
  {\bfseries} 
  {.}         
  {8pt plus 1pt minus 1pt}  
  {}          
\theoremstyle{DP}
\begin{document}

\title{\textbf{\scshape Dualité et principe local-global sur des corps locaux de dimension 2}}

\author{Diego Izquierdo\\
\small
École Normale Supérieure\\
\small
45, Rue d'Ulm - 75005 Paris - France\\
\small
\texttt{diego.izquierdo@ens.fr}
}
\date{}
\normalsize
\maketitle

\section*{Introduction}

\hspace{4ex} Annoncés dans les années 1960 par Tate, les théorèmes de dualité arithmétique ont joué depuis un rôle fondamental dans l'étude des points rationnels des variétés algébriques sur les corps globaux (corps de nombres et corps de fonctions de courbes sur des corps finis). Plus récemment, nous avons connu un regain d'intérêt pour ces questions sur des corps plus compliqués. C'est ainsi que Harari, Scheiderer et Szamuely ont étudié les corps de fonctions de courbes sur des corps $p$-adiques (\cite{HS1}, \cite{HS2}) , puis que Colliot-Thélène et Harari se sont penchés sur les corps de fonctions de courbes sur $\mathbb{C}((t))$ (\cite{CTH}). L'auteur du présent article a ensuite généralisé les précédents résultats aux corps de fonctions de courbes sur des corps locaux supérieurs (\cite{Izq1}, \cite{Izq2}, \cite{Izq3}). \\

\hspace{4ex} Soit $k$ un corps algébriquement clos de caractéristique 0, un corps $p$-adique, ou encore $\mathbb{C}((t))$. Considérons $K_0=k((x,y))$ le corps des séries de Laurent à deux variables sur $k$. On rappelle que $K_0$ est le corps des fractions de $k[[x,y]]$, un anneau local de dimension 2. Il est strictement contenu dans $k((x))((y))$ et, contrairement à $k((x))((y))$, son comportement est similaire à celui d'un corps global. Le but de cet article est d'obtenir des théorèmes de dualité arithmétique sur $K_0$, puis de les appliquer à l'étude des points rationnels sur les $K_0$-variétés algébriques. La principale difficulté que nous devons gérer dans cet article consiste à établir une dualité d'Artin-Verdier sur un tel corps puisqu'elle demande de travailler avec des schémas singuliers (typiquement la clôture intégrale de $\mathbb{C}[[x,y]]$ dans une extension finie de $\mathbb{C}((x,y))$). C'est un phénomène que l'on ne rencontrait pas dans les travaux portant sur les corps de fonctions de courbes, puisque dans ces cas-là la dualité d'Artin-Verdier découlait assez formellement de la dualité de Poincaré et d'un théorème de dualité pour le corps de base.\\

\hspace{4ex} Les motivations pour se pencher sur une telle question sont multiples. Tout d'abord, les corps de la forme $k((x,y))$ ont récemment intéressé plusieurs auteurs (Colliot-Thélène, Gille, Hu, Ojanguren, Parimala, Suresh...). Citons notamment l'article \cite{CTPS}, dans lequel Colliot-Thélène, Parimala et Suresh introduisent certaines obstructions au principe local-global sur $\mathbb{C}((x,y))$ et se demandent si ce sont les seules pour les espaces principaux homogènes sous des tores. Nous répondrons affirmativement à cette question dans le présent article. Ensuite, les corps de la forme $\mathbb{C}((x,y))$ interviennent de manière importante dans les travaux de Harbater, Hartmann et Kraschen dans lesquels ils utilisent des techniques de patching pour étudier le principe local-global sur certains corps de fonctions de courbes (par exemple \cite{HHK}). Il est donc naturel de penser que, si l'on veut comprendre les résultats qu'ils obtiennent avec les techniques de dualité, il convient de commencer par se pencher sur les questions de dualité pour $\mathbb{C}((x,y))$. Finalement, il est probablement nécessaire de comprendre le corps $\mathbb{C}((x,y))$ avant de s'intéresser aux corps de fonctions de surfaces complexes, pour lesquels rien n'est connu actuellement du point de vue de la dualité arithmétique.

\subsection*{Organisation de l'article.} 

\hspace{4ex} Ce texte est constitué de 3 parties. Dans la première section, on établit une dualité de type Artin-Verdier sur certains anneaux locaux de dimension 2. Plus précisément, on montre le théorème suivant:

\begin{theorem} (Corollaire \ref{AVdim2bis})\\
Soient $k$ un corps algébriquement clos de caractéristique 0 et $R_0$ une $k$-algèbre commutative, locale, intègre, normale, hensélienne, excellente, de dimension 2, de corps résiduel $k$. Notons $\mathfrak{m}_0$ l'idéal maximal de $R_0$ et $X_0 = \text{Spec}\; R_0 \setminus \{\mathfrak{m}_0\}$. Soient $j: U \hookrightarrow X_0$ une immersion ouverte avec $U$ non vide et $F$ un schéma en groupes fini étale sur $U$ de $n$-torsion et de dual de Cartier $F' = \underline{\text{Hom}}_U(F,\mu_n)$. Il existe alors un accouplement parfait de groupes finis:
$$H^r(U,F') \times H^{3-r}(X,j_!F) \rightarrow \mathbb{Z}/n\mathbb{Z}(-1).$$
\end{theorem}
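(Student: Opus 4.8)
The plan is to deduce the result from the Artin--Verdier theorem \ref{AVdim2} established in this section (for $\operatorname{Spec}R_0$ and its punctured spectrum, with coefficients $\mathbb{Z}/n\mathbb{Z}(j)$) by two successive dévissages; I write $X_0$ for the scheme denoted $X$ in the statement, and $i\colon Z\hookrightarrow X_0$ for the reduced complement of $U$. Since $R_0$ is normal of dimension $2$ it is regular in codimension $\le 1$, so $X_0$ is regular and $Z$ is a finite set of closed points, each with residue field $\kappa(x)$ of cohomological dimension $1$ and absolute Galois group canonically $\widehat{\mathbb{Z}}(1)$ (here $\operatorname{char}k=0$ is used). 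The pairing of the statement is the cup product: the Cartier pairing $F\otimes F'\to\mu_n$ on $U$, together with the projection formula $j_!F\otimes Rj_*F'\cong j_!(F\otimes F')$ on $X_0$, yields $H^r(U,F')\times H^{3-r}(X_0,j_!F)\to H^3(X_0,j_!\mu_n)\to H^3(X_0,\mu_n)$, and one post-composes with the trace isomorphism $H^3(X_0,\mu_n)\xrightarrow{\ \sim\ }\mathbb{Z}/n\mathbb{Z}(-1)$ supplied by \ref{AVdim2} (the fundamental class of the $3$-dimensional ``link'' of the singularity; for $R_0=k[[x,y]]$ it is obtained by absolute purity at the closed point of $\operatorname{Spec}k[[x,y]]$).

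The first dévissage reduces an arbitrary finite étale $F$ of $n$-torsion to a constant twist $\mathbb{Z}/n\mathbb{Z}(j_0)$. Choose a connected finite étale $\pi\colon U'\to U$ trivialising $F$; after shrinking $U$ --- harmless, since the second dévissage will treat an arbitrary nonempty open --- we may assume that $\pi$ is the base change to $U$ of the normalisation morphism $\operatorname{Spec}R_0'\to\operatorname{Spec}R_0$, where $R_0'$ is the integral closure of $R_0$ in the function field of $U'$, and that this morphism is étale over $U$. Then $R_0'$ is again local, integral, normal, henselian, excellent, of dimension $2$ with residue field $k$ ($R_0'$ is a domain finite over the henselian local ring $R_0$, hence henselian local), \emph{but it is in general singular}: this is precisely why \ref{AVdim2} had to be proved for normal rings without any regularity hypothesis. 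Using Shapiro's lemma for the finite morphisms $\pi$ and $\operatorname{Spec}R_0'\setminus\{\mathfrak{m}_0'\}\to X_0$ (so that $H^\bullet(U',\pi^*{-})=H^\bullet(U,\pi_*\pi^*{-})$ and, with $j'\colon U'\hookrightarrow X_0'$, $H^\bullet(X_0',j'_!\pi^*{-})=H^\bullet(X_0,j_!\pi_*\pi^*{-})$), the unit and trace maps of the adjunction $(\pi^*,\pi_*)$, and the fact that the cup-product pairing is adjoint to itself under them, the perfectness of the pairing for $(U,F)$ follows from its perfectness for $\mathbb{Z}/n\mathbb{Z}(j_0)$ and its Cartier dual on the punctured spectrum of $R_0'$.

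It remains to prove perfectness, with $\bar F=\mathbb{Z}/n\mathbb{Z}(j_0)$ constant on all of $X_0$ (we keep the notation $R_0,X_0,U$), for an arbitrary nonempty open $U\subsetneq X_0$; the case $U=X_0$ is \ref{AVdim2}. Since $X_0$ is regular, Gabber's absolute purity gives $Ri_x^!\,\mathbb{Z}/n\mathbb{Z}(j_0)\cong\mathbb{Z}/n\mathbb{Z}(j_0-1)[-2]$ on $\operatorname{Spec}\kappa(x)$ for $x\in Z$. Feeding the localisation triangles $i_*Ri^!\bar F'\to\bar F'\to Rj_*(\bar F'|_U)$ and $j_!(\bar F|_U)\to\bar F\to i_*i^*\bar F$ into the two arguments of the cup-product pairing, one obtains a morphism between two long exact sequences relating the pairing of the statement to the Artin--Verdier pairing $H^r(X_0,\bar F')\times H^{3-r}(X_0,\bar F)\to\mathbb{Z}/n\mathbb{Z}(-1)$ of \ref{AVdim2} and, for each $x\in Z$, to a ``local'' pairing on $\kappa(x)$ (between $H^{r-2}(\kappa(x),\bar F'(-1))$ and $H^{3-r}(\kappa(x),\bar F)$) with values in $H^1(\kappa(x),\mathbb{Z}/n\mathbb{Z})\cong\mathbb{Z}/n\mathbb{Z}(-1)$; the latter is perfect because it is the cohomological-dimension-one duality for $\kappa(x)$, i.e.\ the duality for the class formation $\widehat{\mathbb{Z}}(1)$. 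Once the three trace maps (on $X_0$ and on each $\kappa(x)$) are shown to be compatible with the boundary morphisms --- the main bookkeeping step, where the Tate twist $(-1)$ must be tracked carefully --- the five lemma yields perfectness for $(U,j_!\bar F)$. Finiteness of all the groups in sight follows from the excellence of $R_0$ (constructibility and finiteness theorems in étale cohomology) together with the bound $\operatorname{cd}_nX_0\le 3$. The two points I expect to require real care are the transfer of the pairing along the corestriction of the first dévissage --- which is exactly what makes the singular ring $R_0'$ unavoidable, the difficulty stressed in the introduction --- and the compatibility of the global and local trace maps in the second.
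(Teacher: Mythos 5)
Your proposal misplaces where the work lies. Théorème \ref{AVdim2}, as established in the paper, already asserts the perfectness of $\text{Ext}^r_U(F,\mathbb{G}_m)\times H^{3-r}_c(U,F)\to\mathbb{Q}/\mathbb{Z}$ for \emph{every} constructible sheaf $F$ on \emph{every} nonempty open $U$ of $X_0$; the corollary then follows in one line by identifying $\text{Ext}^r_U(F,\mathbb{G}_m)$ with $H^r(U,\underline{\text{Hom}}_U(F,\mathbb{G}_m))=H^r(U,F')$ for $F$ locally constant, via the spectral sequence $H^r(U,\underline{\text{Ext}}^s_U(F,\mathbb{G}_m))\Rightarrow\text{Ext}^{r+s}_U(F,\mathbb{G}_m)$ and the vanishing of $\underline{\text{Ext}}^s_U(F,\mathbb{G}_m)$ for $s\geq 1$ (divisibility of the stalks of $\mathbb{G}_m$, Lemme \ref{faisceauext}). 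You never perform this identification; instead you introduce a new cup-product pairing into $\mu_n$ and take as base case the pairing of \ref{AVdim2} for constant coefficients on the full punctured spectrum, so you owe a comparison between the two pairings that the sketch does not supply. This would be acceptable as an alternative route if the two dévissages closed up, but the first one does not.

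Concretely: Shapiro's lemma together with the adjunction $(\pi^*,\pi_*)$ and the self-adjointness of the cup product gives perfectness for the sheaf $\pi_*\pi^*F$ on $U$, assuming perfectness for the constant sheaf $\pi^*F$ on $U'$ (this is the norm argument of la proposition \ref{norm}, and it is indeed where the singular ring $R_0'$ enters). But $F$ is only a \emph{subsheaf} of $\pi_*\pi^*F$; it is a direct summand only when $\deg\pi$ is prime to $n$, which fails in general since the trivialising cover typically has degree divisible by primes dividing $n$. Passing from $\pi_*\pi^*F$ to $F$ forces you to control the cokernel of the unit $F\hookrightarrow\pi_*\pi^*F$, a new constructible sheaf, and the resulting five-lemma argument needs the duality in adjacent degrees for that cokernel --- i.e.\ for arbitrary constructible sheaves, not just constant ones. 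This is precisely why the paper does not argue as you do but runs the descending induction on $r_0$ of la proposition \ref{rec}, seeded by the vanishing in degrees $>3$ and combining la décomposition de la proposition \ref{dec} (which adds a skyscraper summand $F_f$ exactly to absorb this defect) with two diagram chases. The difficulty you flag as ``transfer of the pairing along the corestriction'' is therefore not bookkeeping but a structural gap: the reduction from locally constant to constant coefficients is not formal. Your second dévissage (localisation, purity on the regular scheme $X_0$, and the $\hat{\mathbb{Z}}(1)$-duality for the residue fields at the points of $Z$) is sound and parallels les propositions \ref{suppfer} et \ref{change ouvert}, but it cannot repair the first step.
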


Ici, $\mathbb{Z}/n\mathbb{Z}(-1)$ désigne $\text{Hom}(\mu_n,\mathbb{Z}/n\mathbb{Z})$, qui, quitte à choisir une racine primitive $n$-ième de l'unité, est isomorphe à $\mathbb{Z}/n\mathbb{Z}$. Le théorème, dont la preuve fait l'objet de la totalité de la section \ref{secAVdim2}, s'applique bien sûr à l'anneau $R_0=\mathbb{C}[[x,y]]$. La principale difficulté consiste à comprendre la cohomologie de $\mathbb{G}_m$ sur la normalisation de $R_0$ dans une extension finie de son corps des fractions, puisque cela demande de travailler avec des schémas singuliers, même lorsque $R_0$ est régulier. Ces problèmes sont résolus dans la proposition \ref{G_m}.\\

\hspace{4ex} Dans la deuxième section, nous appliquons le résultat précédent pour obtenir des théorèmes de dualité de type Poitou-Tate pour les modules finis et les tores sur des corps comme $\mathbb{C}((x,y))$ ou $\mathbb{C}((t))((x,y))$ ou $\mathbb{F}_q((x,y))$. Les principaux résultats sont les théorèmes \ref{PTC((x,t))} et \ref{PTC((x,t))tore}. \\

\hspace{4ex}  Finalement, dans la troisième section, nous appliquons les théorèmes de dualité précédents pour étudier les obstructions au principe local-global et à l'approximation faible. Plus précisément, dans les théorèmes \ref{PLGC((x,y))} et \ref{local-global}, nous comprenons les obstructions au principe local-global pour les torseurs sous des groupes linéaires connexes sur $\mathbb{C}((x,y))$ ainsi que pour les torseurs sous des tores sur $\mathbb{F}_q((x,y))$, $\mathbb{C}((t))((x,y))$ ou $\mathbb{Q}_p((x,y))$. Ce sont précisément ces deux théorèmes (\ref{PLGC((x,y))} et \ref{local-global}) qui répondent à la question posée à la fin de l'article \cite{CTPS}. Ils impliquent aussi la proposition suivante:
\begin{proposition} (Proposition \ref{stabrat})\\
Soit $k$ un corps fini de caractéristique $p$ et considérons un tore stablement rationnel $T$ sur $K_0=k((x,y))$. Soit $Y$ un espace principal homogène sous $T$ qui devient trivial sur une extension finie de $K_0$ de degré non divisible par $p$. Alors $Y$ vérifie le principe local-global.
\end{proposition}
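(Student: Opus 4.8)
Le plan est de déduire la proposition des théorèmes local-global \ref{PLGC((x,y))} et \ref{local-global}, combinés à la théorie classique des tores stablement rationnels. On se ramène d'abord à la partie première à $p$. Posons $n=[L:K_0]$, de sorte que $p\nmid n$ et que $Y\times_{K_0}L$ soit un torseur trivial, c'est-à-dire que $\mathrm{res}_{L/K_0}[Y]=0$ dans $H^1(L,T)$. Comme $Y$ est un torseur sous un tore, sa classe $[Y]$ vit dans $H^1(K_0,T)$, et la formule de restriction-corestriction donne $n\,[Y]=\mathrm{cor}_{L/K_0}\,\mathrm{res}_{L/K_0}[Y]=0$. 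Le groupe $H^1(K_0,T)$ étant de torsion, $[Y]$ appartient à sa partie première à $p$, et c'est précisément le domaine de coefficients auquel s'appliquent les théorèmes \ref{PLGC((x,y))} et \ref{local-global}, la partie $p$-primaire en étant exclue pour les raisons habituelles en caractéristique $p$.

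On utilise ensuite le fait, dû à Colliot-Thélène et Sansuc, qu'un tore stablement rationnel sur $K_0$ est facteur direct, dans la catégorie des $K_0$-tores, d'un tore quasi-trivial $Q=\prod_i R_{L_i/K_0}\mathbb{G}_m$, c'est-à-dire que le module des caractères $\hat T$ est facteur direct d'un module de permutation sur $\mathrm{Gal}(\overline{K_0}/K_0)$. Par conséquent $H^1(K_0,T)$ est facteur direct de $H^1(K_0,Q)=\prod_i H^1(L_i,\mathbb{G}_m)=0$, par le lemme de Shapiro et le théorème 90 de Hilbert, de sorte que $H^1(K_0,T)=0$, et en particulier $\Sha^1(K_0,T)=0$.

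Il reste à invoquer le théorème \ref{local-global}, qui décrit l'obstruction au principe local-global pour un torseur sous un tore sur $K_0$ (sur les classes premières à $p$) en termes de $\Sha^1(K_0,T)$, ou dualement, via la dualité de type Poitou-Tate établie dans la deuxième section (théorème \ref{PTC((x,t))tore}), en termes de $\Sha^2(K_0,\hat T)$. Une classe première à $p$ est détectée par la famille de ses localisations exactement lorsqu'elle n'appartient pas à $\Sha^1(K_0,T)$. Puisque $\Sha^1(K_0,T)=0$, cette obstruction est vide pour tout torseur dont la classe est première à $p$, en particulier pour $Y$ d'après la première étape, et donc $Y$ vérifie le principe local-global.

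La seule partie qui demande un peu de soin me semble être la traduction, interne aux théorèmes \ref{PLGC((x,y))} et \ref{local-global}, entre le fait que $Y$ vérifie le principe local-global et l'annulation du groupe de $\Sha$ pertinent. Une fois en place la machinerie de type Poitou-Tate de la deuxième section elle est formelle, et les deux autres ingrédients — la structure des tores stablement rationnels et la réduction aux coefficients premiers à $p$ — sont de pure routine. On peut d'ailleurs court-circuiter entièrement la dualité et observer, comme ci-dessus, que $H^1(K_0,T)=0$ force $Y$ à être trivial sur $K_0$, auquel cas le principe local-global est vérifié pour la raison triviale que $Y(K_0)\neq\emptyset$.
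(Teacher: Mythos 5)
Il y a une erreur fatale dans votre deuxième paragraphe. L'affirmation qu'un tore stablement rationnel est facteur direct d'un tore quasi-trivial (autrement dit que $\hat{T}$ est facteur direct d'un module de permutation, c'est-à-dire que $T$ est \emph{inversible}) est fausse : la rationalité stable équivaut seulement au fait que le module flasque $\hat{F}$ d'une résolution flasque $0 \rightarrow \hat{T} \rightarrow P \rightarrow \hat{F} \rightarrow 0$ est stablement de permutation, ce qui donne une suite exacte $0 \rightarrow \hat{T} \rightarrow P' \rightarrow P'' \rightarrow 0$ avec $P'$, $P''$ de permutation, mais cette suite ne se scinde pas en général (l'obstruction vit dans $\text{Ext}^1_G(P'',\hat{T})$, qui est un produit de groupes $H^1(H_i,\hat{T})$ non nuls en général). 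Le contre-exemple standard est le tore de norme 1 d'une extension cyclique $L/K$ : il est rationnel, donc stablement rationnel, mais $H^1(K,T) = K^{\times}/N_{L/K}(L^{\times})$ est non trivial dès que la norme n'est pas surjective. Votre conclusion $H^1(K_0,T)=0$ est donc fausse, et avec elle toute la fin de l'argument, y compris la remarque finale selon laquelle on pourrait « court-circuiter » la dualité : si $H^1(K_0,T)$ s'annulait pour tout tore stablement rationnel, la proposition serait une trivialité sans aucun contenu local-global.

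La preuve correcte exploite la proposition 6 de \cite{CTS} sous la forme suivante : la rationalité stable de $T$ fournit une suite exacte de tores $0 \rightarrow T' \rightarrow R \rightarrow S \rightarrow 0$ avec $R$ et $S$ quasi-triviaux, où $T'$ est le tore dual. Comme $H^1(K_0,S)=0$ (Shapiro et Hilbert 90), on obtient une injection $H^2(K_0,T') \hookrightarrow H^2(K_0,R)$, donc $\Sha^2(K_0,T') \hookrightarrow \Sha^2(K_0,R)$; or $\Sha^2(K_0,R)_{\text{non}-p}=0$ d'après la remarque \ref{prob} (qui repose sur le théorème de Saito, et n'a rien de formel). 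On en déduit $\Sha^2(K_0,T')_{\text{non}-p}=0$, puis, \emph{via} la dualité de Poitou--Tate du théorème \ref{PTC((x,t))tore}(ii), $\Sha^1(K_0,T)_{\text{non}-p}=0$ ; comme la classe de $Y$ est d'ordre premier à $p$ (votre première étape, qui est correcte), cela donne le principe local-global. La dualité est donc un ingrédient indispensable : on ne contrôle directement que le $\Sha^2$ du tore dual, pas le $H^1$ de $T$.
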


\begin{remarque}
Une proposition analogue à la précédente est vraie sur un corps de la forme $\mathbb{Q}_p(x)$ (corollaire 5.7 de \cite{HS1}), mais pas sur un corps de la forme $\mathbb{C}((t))(x)$ (exemple 2.9 de \cite{CTH}).
\end{remarque}

À la toute fin de l'article, dans le théorème \ref{AFC((x,y))}, nous nous penchons sur l'approximation faible pour les tores sur $\mathbb{C}((x,y))$, $\mathbb{C}((t))((x,y))$ ou encore sur $\mathbb{Q}_p((x,y))$.  \\

\hspace{4ex} Le lecteur remarquera que certaines preuves en fin d'article sont similaires à des preuves pouvant être trouvées dans la littérature, auquel cas elles ne sont qu'esquissées.\\

\hspace{4ex} \textbf{Remerciements.} Je tiens à remercier en premier lieu David Harari pour son soutien et ses conseils, ainsi que pour sa lecture soigneuse de ce texte: sans lui, ce travail n'aurait pas pu voir le jour. Je suis aussi très reconnaissant à Olivier Wittenberg pour d'intéressantes discussions. Je voudrais finalement remercier l'École Normale Supérieure pour ses excellentes conditions de travail.

\subsection*{Rappels et notations}

\textbf{Corps.} Si $L$ est un corps, on notera $L^s$ sa clôture séparable. \\

\textbf{Dimension.} Lorsque $Z$ est un schéma noethérien et $i$ un entier naturel, on notera $Z^{(i)}$ l'ensemble des points de codimension $i$ de $Z$. En particulier, $Z^{(0)}$ désignera l'ensemble des points génériques de $Z$, que l'on confondra parfois avec l'ensemble des composantes irréductibles de $Z$.\\

\textbf{Groupes abéliens.} Pour $M$ un groupe abélien, $n>0$ un entier et $\ell$ un nombre premier, on notera:
\begin{itemize}
\item[$\bullet$] ${_n}M$ la partie de $n$-torsion de $M$.
\item[$\bullet$] $M\{\ell\}$ la partie de torsion $\ell$-primaire de $M$.
\item[$\bullet$] $M_{\text{non}-\ell}=\bigoplus_{p \neq \ell} M\{p\}$ où $p$ décrit les nombres premiers différents de $\ell$. Par commodité, $M_{\text{non}-0}$ désignera $ M$.
\item[$\bullet$] $M^{(\ell)} = \varprojlim_n M/\ell^n$ le complété $\ell$-adique de $M$.
\item[$\bullet$] $\overline{M}$ le quotient de $M$ par son sous-groupe divisible maximal.
\item[$\bullet$] $M^D$ le groupe des morphismes $M \rightarrow \mathbb{Q}/\mathbb{Z}$.
\end{itemize}
\vspace{5 mm}

\textbf{Faisceaux.} Sauf indication du contraire, tous les faisceaux sont considérés pour le petit site étale. Pour $F$ un faisceau sur un schéma $Z$, on note $\text{Hom}_Z(F,-)$ (ou $\text{Hom}(F,-)$ s'il n'y a pas d'ambiguïté) le foncteur qui à un faisceau $G$ sur $Z$ assosie le groupe des morphismes de faisceaux de $F$ vers $G$ et $\underline{\text{Hom}}_Z(F,-)$ (ou $\underline{\text{Hom}}(F,-)$ s'il n'y a pas d'ambiguïté) le foncteur qui à un faisceau $G$ sur $Z$ associe le faisceau étale $U \mapsto \text{Hom}_U(F|_U,G|_U)$. \\

\textbf{Cohomologie.} Par convention, pour $F$ et $G$ des faisceaux sur un schéma $X$ et $r$ un entier strictement négatif, on pose $\text{Ext}^r_X(F,G)=H^r(X,F)=0$.\\

\textbf{Groupe de Brauer.} Lorsque $Z$ est un schéma, on note $\text{Br}(Z)$ le groupe de Brauer cohomologique $H^2(Z,\mathbb{G}_m)$. Si $Z$ est une $L$-variété géométriquement intègre pour un certain corps $L$, on note $\text{Br}_{\text{al}}(Z)$ le quotient du groupe de Brauer algébrique $\text{Ker}(\text{Br}(Z) \rightarrow \text{Br}(Z \times_L L^s))$ par $\text{Im}(\text{Br}(L) \rightarrow \text{Br}(Z))$.\\

\textbf{Complexes de Bloch et cohomologie motivique.} Lorsque $X$ est un schéma de Dedekind, on notera $\mathbb{Z}(i)$ le $i$-ème complexe de Bloch sur le petit site étale de $X$. On renvoie à la partie 0.5 de \cite{Izq1} pour des rappels sur les complexes de Bloch.\\

\textbf{Courbes sur un corps algébriquement clos.} Soient $k$ un corps algébriquement clos et $Y$ une courbe projective réduite connexe sur $k$. La normalisation de $Y$ est, par définition, la réunion disjointe des normalisations des composantes irréductibles de $Y$. Si $\tilde{Y}$ est la normalisation de $Y$, le lemme 7.5.18(b) de \cite{Liu} donne une suite exacte:
$$0 \rightarrow (k^{\times})^{t} \rightarrow \text{Pic} \; Y \rightarrow \text{Pic} \; \tilde{Y} \rightarrow 0,$$
pour un certain entier naturel $t$. De plus, toujours d'après le lemme 7.5.18(b) de \cite{Liu}, on a $t = \mu - c + 1$ où $c$ est le nombre de composantes irréductibles de $Y$ et $\mu =\sum_{y \in Y(k) } (m_y - 1)$, l'entier $m_y$ désignant le nombre de points de $\tilde{Y}$ qui sont au-dessus de $y$. Par ailleurs, d'après le théorème 7.4.39 de \cite{Liu}, le groupe $\text{Pic} \; \tilde{Y}$ est isomorphe à $ \mathbb{Z}^{|Y^{(0)}|} \oplus (\mathbb{R}/\mathbb{Z})^{2\cdot \sum_{v \in Y^{(0)}} g_v}$, où $g_v$ désigne le genre de la composante irréductible de $Y$ de point générique $v$. Comme $k^{\times}$ est divisible, la suite exacte précédente est scindée, d'où:
\begin{equation*}\label{Picbis}
\text{Pic}\; Y \cong (k^{\times})^{t} \oplus \mathbb{Z}^{|Y^{(0)}|} \oplus (\mathbb{R}/\mathbb{Z})^{2\cdot \sum_{v \in Y^{(0)}} g_v}.
\end{equation*}

\textbf{Graphes.} On utilisera notamment les notions de graphe biparti et nombre cyclomatique d'un graphe. Par graphe biparti on entend graphe de nombre chromatique 2 au sens du deuxième paragraphe du chapitre IV de \cite{Berge}. Le nombre cyclomatique est défini dans le premier paragraphe du chapitre IV de \cite{Berge}.

\subsection*{Les corps étudiés}

\hspace{4ex} Soient $k$ un corps et $R_0$ une $k$-algèbre commutative, locale, géométriquement intègre (ie $R_0 \otimes_k k^s$ est intègre), normale, hensélienne, excellente, de dimension 2 et de corps résiduel $k$. Par exemple, $R_0$ pourrait être le complété ou l'hensélisé d'une surface complexe normale en un de ses points fermés. Soient $K_0$ le corps des fractions de $R_0$ et $\mathfrak{m}_0$ l'idéal maximal de $R_0$. Posons $\mathcal{X}_0 = \text{Spec} \; R_0$ et $X_0 = \mathcal{X}_0 \setminus \{\mathfrak{m}_0\}$. Le schéma $\mathcal{X}_0$ est, en général, singulier, alors que $X_0$ est un schéma de Dedekind (non affine).\\

\hspace{4ex} Supposons donnés $\tilde{\mathcal{X}}_0$ un schéma régulier intègre de dimension 2 et un morphisme projectif surjectif $f_0: \tilde{\mathcal{X}}_0 \rightarrow \mathcal{X}_0$ induisant un isomorphisme entre $f_0^{-1}(X_0)$ et $X_0$. Si $f_0$ n'est pas un isomorphisme, la fibre spéciale $f^{-1}_0(\mathfrak{m}_0)$ de $f_0$ est une $k$-courbe en général réductible. Chaque $v \in (\tilde{\mathcal{X}}_0)^{(1)}$ définit une valuation discrète de rang 1 sur $K_0$. On note alors $K_{0,v}$ le complété correspondant de $K_0$ et $k(v)$ son corps résiduel. Si $v \not\in f^{-1}_0(\mathfrak{m}_0)$, le corps $k(v)$ est le corps des fractions d'un anneau de valuation discrète hensélien de corps résiduel $k$. Si $v \in f_0^{-1}(\mathfrak{m}_0)$, le corps $k(v)$ est le corps des fonctions de la composante irréductible de $f^{-1}_0(\mathfrak{m}_0)$ correspondant à $v$.\\

\hspace{4ex} Dans le cas où $k$ est algébriquement clos de caractéristique nulle, le corps $K_0$ est de dimension cohomologique 2, et les $k(v)$ sont de dimension cohomologique 1. De plus, si $v \not\in f^{-1}_0(\mathfrak{m}_0)$, le groupe de Galois absolu de $k(v)$ est isomorphe à $\hat{\mathbb{Z}}$ et, d'après l'exemple I.1.10 de \cite{MilADT}, pour chaque module galoisien fini $F$ sur $k(v)$, on a une dualité parfaite de groupes finis:
$$H^0(k(v),F) \times H^1(k(v),\underline{\text{Hom}}(F,k(v)^{s,\times})) \rightarrow \mathbb{Q}/\mathbb{Z}.$$
Dans ce contexte, la remarque 2.3 de \cite{CTPS} fournit une suite exacte:
$$0 \rightarrow \text{Br} \; K_0 \rightarrow \bigoplus_{v \in (\tilde{\mathcal{X}}_0)^{(1)}} \text{Br} \; K_{0,v} \rightarrow \bigoplus_{v \in (\tilde{\mathcal{X}}_0)^{(2)}} \mathbb{Q}/\mathbb{Z} \rightarrow 0,$$
puisque l'application résidu $H^2(K_{0,v},\mu_n) \rightarrow H^1(k(v),\mathbb{Z}/n\mathbb{Z})$ (définie dans l'appendice du chapitre II de \cite{Ser}) est un isomorphisme.\\

\hspace{4ex} Pour terminer, on rappelle aussi que, dans le cas où $k$ est séparablement clos, l'indice et l'exposant pour les $K_0$-algèbres simples centrales coïncident et que la conjecture de Serre II vaut pour $K_0$. On pourra aller voir le théorème 1.4 de \cite{CGR}.

\section{Dualité d'Artin-Verdier}\label{secAVdim2}

\hspace{4ex} Dans toute cette section, on supposera que $k$ est algébriquement clos de caractéristique 0. Le but est de construire pour chaque entier $r\in \mathbb{Z}$ et chaque faisceau constructible $F$ sur un ouvert non vide $U$ de $X_0$ un accouplement:
$$ AV: \text{Ext}^r_U(F,\mathbb{G}_m) \times H^{3-r}(X_0,j_!F) \rightarrow \mathbb{Q}/\mathbb{Z}$$
(où $j: U \hookrightarrow X_0$ désigne l'immersion ouverte) et de démontrer qu'il s'agit d'un accouplement parfait de groupes finis. Nous allons suivre le même schéma de preuve que pour le théorème classique d'Artin-Verdier sur les corps de nombres (voir la section II.3 de \cite{MilADT}). Cependant, pour ce faire, il est nécessaire de commencer par comprendre la cohomologie du faisceau $\mathbb{G}_m$ sur les ouverts de $X_0$. Et cela est nettement plus compliqué que dans le cas des corps de nombres parce qu'il faut gérer les singularités de $\mathcal{X}_0$. C'est une des principales nouveautés apportées par cet article.

\begin{remarque}
\begin{itemize}
\item[$\bullet$] Dans le cas $K_0=\mathbb{C}((x,y))$, on pourrait penser que l'on n'a pas besoin de gérer des singularités, puisque $\mathbb{C}[[x,y]]$ est régulier. Cependant, dans la preuve du théorème d'Artin-Verdier, on a besoin de pouvoir remplacer $K_0$ par des extensions finies, et la clôture intégrale de $\mathbb{C}[[x,y]]$ dans une extension finie de $\mathbb{C}((x,y))$ est, en général, singulière.
\item[$\bullet$] Dans les articles \cite{HS1}, \cite{CTH} et \cite{Izq1}, la dualité d'Artin-Verdier ne posait pas trop de difficultés puisqu'elle découlait assez formellement de la dualité de Poincaré et d'une dualité sur le corps de base. C'est une différence majeure entre cet article et les articles antérieurs portant sur les théorèmes de dualité sur des corps de fonctions de courbes.
\item[$\bullet$] Il y a très peu de résultats dans la littérature de dualités à la Artin-Verdier dans des cas où il faille considérer des singularités. On peut citer la section II.6 de \cite{MilADT}, mais la situation considérée et les preuves sont très différentes de celles du présent article.
\end{itemize}
\end{remarque}

\subsection{Préliminaire: deux lemmes d'algèbre linéaire}

\hspace{4ex} Dans ce paragraphe, on établit deux lemmes d'algèbre linéaire sur les graphes, qui seront très utiles par la suite.

\begin{lemma} \label{graphe}
Soit $A$ un groupe abélien. Soit $\Gamma$ un graphe fini connexe non orienté biparti. On note $V=V_1 \sqcup V_2$ l'ensemble de ses sommets et $E$ l'ensemble de ses arêtes. Pour $v \in V$, on note $I(v)$ l'ensemble des arêtes ayant $v$ pour extrémité. Soit $(a_v)_{v \in V} \in A^V$ tel que:
$$\sum_{v \in V_1} a_v =\sum_{v \in V_2} a_v.$$
Alors il existe $(x_e)_{e \in E}\in A^E$ tel que, pour tout $v \in V$, on a: $$\sum_{e \in I(v)} x_e = a_v.$$
\end{lemma}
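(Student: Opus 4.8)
The plan is to argue by induction in two stages: first reduce to the case where $\Gamma$ is a tree, and then treat trees by induction on the number of vertices via leaf removal. Since only addition and subtraction in $A$ are ever used, the generality of $A$ (an arbitrary abelian group) causes no trouble. I would begin by recording the reverse implication, which explains the hypothesis and will be checked at every reduction step: if $(x_e)_{e\in E}\in A^E$ is arbitrary and $b_v:=\sum_{e\in I(v)}x_e$, then because $\Gamma$ is bipartite each edge has exactly one endpoint in $V_1$ and one in $V_2$, so $\sum_{v\in V_1}b_v=\sum_{e\in E}x_e=\sum_{v\in V_2}b_v$.

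\textbf{First stage (reduction to a tree).} Let $\mu=|E|-|V|+1$ be the cyclomatic number of $\Gamma$ and induct on $\mu$. If $\mu>0$, the graph contains a cycle, hence an edge $e_0$ that is not a bridge; then $\Gamma'=(V,E\setminus\{e_0\})$ is still finite, connected and bipartite with the same bipartition, its cyclomatic number is $\mu-1$, and the parts $V_1,V_2$ and the datum $(a_v)$ are unchanged, so the compatibility relation still holds. By the induction hypothesis there is $(x_e)_{e\in E\setminus\{e_0\}}$ with $\sum_{e\in I'(v)}x_e=a_v$ for all $v$ (where $I'$ denotes incidence in the reduced graph); extending by $x_{e_0}=0$ and noting that $I(v)$ and $I'(v)$ differ only at the two endpoints of $e_0$, where a zero term has merely been inserted, gives the required family for $\Gamma$. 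This reduces to $\mu=0$, i.e. $\Gamma$ a tree.

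\textbf{Second stage (the tree case).} Induct on $|V|$. If $|V|=1$, then $E=\emptyset$; the single vertex lies in one part, say $V_1$, with $V_2=\emptyset$, so the hypothesis reads $a_v=0$, which is exactly $\sum_{e\in I(v)}x_e=0=a_v$. If $|V|\ge 2$, a finite tree has a leaf $v_0$ with unique incident edge $e_0=\{v_0,w_0\}$; say $v_0\in V_1$ and $w_0\in V_2$ (the opposite case is identical after exchanging $V_1$ and $V_2$). Set $x_{e_0}=a_{v_0}$, let $T'$ be the tree obtained by deleting $v_0$ and $e_0$, and put $a'_{w_0}=a_{w_0}-a_{v_0}$, $a'_v=a_v$ otherwise. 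A short computation gives $\sum_{v\in V_1\setminus\{v_0\}}a'_v=\sum_{v\in V_2}a'_v$ from $\sum_{V_1}a_v=\sum_{V_2}a_v$; the induction hypothesis then yields $(x_e)_{e\in E(T')}$ solving the problem on $T'$, and adjoining $x_{e_0}$ gives a solution on $\Gamma$: at $v_0$ the equation reads $x_{e_0}=a_{v_0}$, at $w_0$ it reads $x_{e_0}+\sum_{e\in I'(w_0)}x_e=a_{v_0}+a'_{w_0}=a_{w_0}$, and at the other vertices nothing has changed.

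The argument is entirely elementary, so there is no real obstacle, only bookkeeping to keep straight: one must check that each deletion preserves connectedness (whence the restriction to non-bridge edges in the first stage), that deleting a leaf from a tree yields a tree, that the bipartition is carried along coherently, and above all that the single relation $\sum_{V_1}a_v=\sum_{V_2}a_v$ survives each modification of $(a_v)$. One may remark in passing that the proof in fact identifies the image of the incidence homomorphism $A^E\to A^V$, $(x_e)\mapsto\bigl(\sum_{e\in I(v)}x_e\bigr)_v$, as exactly the subgroup of $A^V$ cut out by that relation.
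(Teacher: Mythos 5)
Your proof is correct and follows essentially the same route as the paper: reduce to a spanning tree by setting $x_e=0$ on the discarded edges (you do this one non-bridge edge at a time by induction on the cyclomatic number, the paper picks a spanning tree in one step, but this is the same idea), then induct on $|V|$ by deleting a leaf $v_0$, setting $x_{e_0}=a_{v_0}$ and replacing $a_{w_0}$ by $a_{w_0}-a_{v_0}$ at its neighbour. Your extra bookkeeping (checking the base case, the preservation of the compatibility relation, and the converse inclusion identifying the image of the incidence map) is sound but not a departure from the paper's argument.
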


\begin{proof}
Considérons un sous-ensemble d'arêtes $E' \subseteq E$ tel que le sous-graphe $\Gamma'$ de $\Gamma$ dont l'ensemble des sommets est $V$ et celui des arêtes est $E'$ soit un arbre connexe. En posant $x_e=0$ pour $e \in E'$, on se ramène à prouver le lemme pour $\Gamma'$ au lieu de $\Gamma$. On peut donc supposer que $\Gamma$ est un arbre.\\ On procède alors par récurrence sur $|V|$. Si $|V|=1$, le résultat est évident. Soit maintenant $n \geq 1$ et supposons le résultat prouvé dans le cas $|V| = n$. Supposons que $|V|=n+1$. Soit $\Gamma'$ le sous-graphe de $\Gamma$ obtenu en enlevant une feuille $v_0 \in V$ et l'unique arête $e_0 \in E$ qui lui est incidente. Soit $v_1$ l'autre extrémité de $e_0$ et considérons la famille $(b_v)_{v \in V \setminus \{v_0\}}$ définie par $b_v = a_v$ pour $v \not\in \{v_0,v_1\}$ et $b_{v_1}=a_{v_1} - a_{v_0}$. Par hypothèse de récurrence, il existe une famille $(x_e)_{e \in E \setminus \{e_0\}}$ telle que, pour chaque $v \in  V \setminus \{v_0\}$, on a:
$$\sum_{e \in I(v) \setminus \{e_0\}} x_e = b_v.$$
On pose $x_{e_0} = a_{v_0}$. On a alors, pour tout $v \in V$: $$\sum_{e \in I(v)} x_e = a_v.$$
\end{proof}

\begin{lemma} \label{graphe2}
On reprend les notations du lemme précédent. Soit $c$ le nombre cyclomatique (ou premier nombre de Betti) de $\Gamma$. Pour $v \in V$, on note $N(v)$ l'ensemble des voisins de $v$. Lorsque $S$ est un ensemble, on note $\Sigma_S$ le morphisme somme $\bigoplus_{s \in S} A \rightarrow A$.  On pose:
$$\Theta_A(\Gamma):=\left( \bigoplus_{v \in V_1} \text{Ker} \; \Sigma_{N(v)} \right) \cap \left( \bigoplus_{v \in V_2} \text{Ker} \; \Sigma_{N(v)} \right)  \subseteq \bigoplus_{E} A.$$
\begin{itemize}
\item[(i)] Soit $e_0 \in E$ une arête de $\Gamma$ telle que le sous-graphe de $\Gamma$ obtenu en enlevant $e_0$ est connexe. Notons $p_{e_0}^\Gamma: \bigoplus_{E} A \rightarrow A$ la projection sur la composante indexée par $e_0$. Alors la restriction de $p_{e_0}^{\Gamma}$ à $\Theta_A(\Gamma)$ est surjective et scindée.
\item[(ii)] Le groupe $\Theta_A(\Gamma)$ est isomorphe à $A^{c}$.
\end{itemize}
\end{lemma}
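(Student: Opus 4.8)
The guiding observation is that, after orienting every edge of $\Gamma$ from its endpoint in $V_1$ towards its endpoint in $V_2$ (possible since $\Gamma$ is bipartite), the subgroup $\Theta_A(\Gamma)$ is nothing but the group of $1$-cycles of the graph $\Gamma$ with coefficients in $A$: unwinding the definition, a family $(x_e)_{e\in E}\in\bigoplus_E A$ lies in $\Theta_A(\Gamma)$ exactly when $\sum_{e\in I(v)}x_e=0$ for every vertex $v\in V$, which is precisely the condition that its boundary vanishes. Thus (ii) is the assertion that $H_1$ of a connected graph of cyclomatic number $c$, with coefficients in $A$, is $A^c$, and (i) says that restricting a $1$-cycle to a fixed non-separating edge is a split epimorphism. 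The plan is to prove (i) directly and then deduce (ii) by induction on $c$, with (i) furnishing the inductive step. At the outset I would also record the elementary facts that deleting a non-separating edge from a connected graph lowers the cyclomatic number by $1$ (immediate from $c=|E|-|V|+1$) and preserves connectedness and bipartiteness for the same partition $V_1\sqcup V_2$.

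\textbf{Step 1 (proof of (i)).} Since deleting $e_0$ keeps $\Gamma$ connected, $e_0$ is not a bridge, hence lies on some cycle $\gamma$; as $\Gamma$ is bipartite, $\gamma$ has even length $2k$, with consecutive edges $e_0=f_0,f_1,\dots,f_{2k-1}$. For $a\in A$ I would set $x^{(a)}_{f_j}=(-1)^j a$ for $0\le j\le 2k-1$ and $x^{(a)}_e=0$ for edges $e$ outside $\gamma$; the even length makes this consistent around $\gamma$, and at each vertex of $\gamma$ the two incident cycle-edges receive opposite values, so $x^{(a)}\in\Theta_A(\Gamma)$. The assignment $a\mapsto x^{(a)}$ is then a group homomorphism $A\to\Theta_A(\Gamma)$ with $p_{e_0}^{\Gamma}(x^{(a)})=a$, i.e. a section of $p_{e_0}^{\Gamma}|_{\Theta_A(\Gamma)}$; in particular the latter is surjective and split.

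\textbf{Step 2 (proof of (ii)).} I would argue by induction on $c$. If $c=0$ then $\Gamma$ is a tree, and peeling off leaves one at a time — the relation at a leaf forces the value on its unique incident edge to be $0$ — yields $\Theta_A(\Gamma)=0=A^0$. If $c\ge 1$, pick an edge $e_0$ lying on a cycle and set $\Gamma'=\Gamma\setminus\{e_0\}$ (keeping all vertices): it is connected, bipartite for the same partition, of cyclomatic number $c-1$. Comparing the defining relations of $\Theta_A(\Gamma)$ and $\Theta_A(\Gamma')$ vertex by vertex — the relation at a vertex not incident to $e_0$ is literally unchanged, while at an endpoint of $e_0$ the two relations differ exactly by the term $x_{e_0}$ — gives $\Theta_A(\Gamma')=\{x\in\Theta_A(\Gamma):x_{e_0}=0\}=\ker\bigl(p_{e_0}^{\Gamma}|_{\Theta_A(\Gamma)}\bigr)$. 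By Step 1 this kernel sits in a split short exact sequence with quotient $A$, so $\Theta_A(\Gamma)\cong A\oplus\Theta_A(\Gamma')\cong A\oplus A^{c-1}=A^c$ by the inductive hypothesis.

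\textbf{Expected difficulties.} There is no serious obstacle; the work is all bookkeeping. The two points that need care are: checking that the alternating assignment on the even cycle $\gamma$ genuinely closes up and lies in $\Theta_A(\Gamma)$ — this is exactly where bipartiteness enters, and explains why it is hypothesized — and pinning down the identification of $\bigoplus_{v\in V_1}\bigoplus_{s\in N(v)}A$ (and likewise with $V_2$ in place of $V_1$) with $\bigoplus_E A$, so that the condition defining $\Theta_A(\Gamma)$ really reads $\sum_{e\in I(v)}x_e=0$, before comparing $\Theta_A(\Gamma)$ with $\Theta_A(\Gamma')$.
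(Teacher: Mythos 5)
Your proof is correct. For part (ii) you follow exactly the paper's route: remove an edge $e_0$ lying on a cycle, identify $\Theta_A(\Gamma')$ with $\ker\bigl(p_{e_0}^{\Gamma}|_{\Theta_A(\Gamma)}\bigr)$, and use (i) to split the resulting short exact sequence, concluding by induction on $c$ with the leaf-peeling argument for trees as base case — this is word for word the paper's argument. Where you genuinely differ is in (i): the paper proves it by a second induction on the cyclomatic number, reducing to the unicyclic case $c=1$, where $\Theta_A(\Gamma)$ is visibly a copy of $A$ supported with alternating signs on the unique cycle; you instead give a direct one-step construction of the section, taking any simple cycle through $e_0$ (which exists because $e_0$ is not a bridge), whose length is even by bipartiteness, and placing the alternating values $(-1)^j a$ on its edges. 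The mechanism is the same as in the paper's base case, but your packaging avoids the auxiliary induction and makes explicit exactly where the bipartite hypothesis is used. Your opening identification of $\Theta_A(\Gamma)$ with the group of $1$-cycles $H_1(\Gamma;A)$ after orienting edges from $V_1$ to $V_2$ is a pleasant conceptual gloss that the paper leaves implicit; it also explains a priori why the answer should be $A^c$.
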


\begin{proof}
\begin{itemize}
\item[(i)] Procédons par récurrence sur $c$.
\begin{itemize}
\item[$\bullet$] Supposons que $c = 1$. Soit $\Gamma'$ l'unique sous-graphe cyclique de $\Gamma$. On remarque alors immédiatement que $\Theta_A(\Gamma) = \Theta_A(\Gamma')\cong A$ et que $p_{e_0}^{\Gamma}|_{\Theta_A(\Gamma)}$ s'identifie à l'identité sur $A$.
\item[$\bullet$] Soit $c_0 \geq 1$ et supposons le résultat prouvé pour $c = c_0$. Supposons que $\Gamma$ est de nombre cyclomatique $c_0+1$. Comme $\Gamma$ est de nombre cyclomatique au moins 2, il existe une arête $e_1$ telle que le sous-graphe de $\Gamma$ obtenu en enlevant les arêtes $e_0$ et $e_1$ est connexe. Soit alors $\Gamma'$ le sous-graphe de $\Gamma$ obtenu en enlevant l'arête $e_1$. Par hypothèse de récurrence, $p_{e_0}^{\Gamma'}|_{\Theta_A(G')}$ est un morphisme surjectif et scindé. On en déduit immédiatement que $p_{e_0}^{\Gamma}|_{\Theta_A(\Gamma)}$ l'est aussi.
\end{itemize}

\item[(ii)] Procédons par récurrence sur $c$. 
\begin{itemize}
\item[$\bullet$] Supposons que $c=0$, c'est-à-dire que $\Gamma$ est un arbre. Soit $(x_e)_{e \in E} \in \Theta_A(\Gamma)$. On remarque alors que, si $f$ une feuille de $\Gamma$ et si $e_f$ est l'unique arête de $\Gamma$ ayant $f$ pour extrémité, alors $x_{e_f}=0$. Une récurrence simple sur le nombre de sommets de $\Gamma$ permet donc de conclure que $x_e=0$ pour tout $e \in E$, autrement dit, que $\Theta_A(\Gamma)=0$.
\item[$\bullet$] Soit $c_0 \geq 0$ et supposons le résultat prouvé pour $c = c_0$. Supposons que $\Gamma$ est de nombre cyclomatique $c_0+1$. Considérons alors un sous-graphe $\Gamma'$ de $\Gamma$, connexe, de nombre cyclomatique $c_0$, obtenu à partir de $\Gamma$ en enlevant une arête $e_0$. On remarque alors immédiatement que l'on a une suite exacte:
$$0 \rightarrow \Theta_A(\Gamma') \rightarrow \Theta_A(\Gamma) \rightarrow A$$
où le morphisme $\Theta_A(\Gamma) \rightarrow A$ est induit par la projection $p_{e_0}^{\Gamma}$. Par conséquent, en utilisant (i) et l'hypothèse de récurrence, on obtient des isomorphismes: $$\Theta_A(\Gamma) \cong \Theta_A(\Gamma') \oplus A \cong A^{c_0+1}.$$
\end{itemize}
\end{itemize}
\end{proof}

\subsection{Résolution des singularités}\label{desingu}

\hspace{4ex} Soient $K$ une extension finie de $K_0$ et $\mathcal{O}_K$ la normalisation de $R_0$ dans $K$. Comme l'anneau $R_0$ est local hensélien excellent de dimension 2, il en est de même de $\mathcal{O}_K$ (scholie 7.8.3(ii) de \cite{EGA42} et proposition 18.5.9(i) de \cite{EGA44}). On note alors $\mathfrak{m}$ l'idéal maximal de $\mathcal{O}_K$ et on pose $\mathcal{X} = \text{Spec}(\mathcal{O}_K)$ et $X = \mathcal{X} \setminus \{\mathfrak{m}\}$. On notera aussi $\eta = \text{Spec} \; K$ et $g: \eta \hookrightarrow X$ le point générique de $X$.

\hspace{4ex} Considérons un morphisme de schémas $f: \tilde{\mathcal{X}} \rightarrow \mathcal{X} = \text{Spec} \; \mathcal{O}_K$ vérifiant les hypothèses suivantes:
\begin{itemize}
\item[$\bullet$] $\tilde{\mathcal{X}}$ est intègre régulier de dimension 2 et $f$ est projectif;
\item[$\bullet$] $f: f^{-1}(X) \rightarrow X$ est un isomorphisme;
\item[$\bullet$] $f^{-1}(\mathfrak{m})$ est un diviseur à croisements normaux de $\mathcal{X}$ (au sens de la définition 9.1.6 de \cite{Liu}).
\end{itemize}

Un tel morphisme existe d'après \cite{Lipbis}. On pose alors $Y = f^{-1}(\mathfrak{m})$ muni de la structure réduite. Ainsi, $Y$ est une $k$-courbe réduite qui n'est pas forcément irréductible, mais dont les composantes irréductibles sont lisses. Pour $v \in \tilde{\mathcal{X}}^{(1)} \setminus X^{(1)} = Y^{(0)}$, on désigne par $Y_v$ la $k$-courbe projective lisse correspondant à $v$. On note $g_v$ le genre de cette dernière.\\

\hspace{4ex}Par ailleurs, à la courbe $Y$ on associe le graphe biparti $\Gamma$ suivant:
\begin{itemize}
\item[$\bullet$] sommets: $V= V_1 \sqcup V_2$, où $V_1 = Y^{(0)}$ est l'ensemble des (points génériques des) composantes irréductibles de $Y$ et $V_2$ est l'ensemble des points fermés de $Y$ qui sont intersection de deux composantes irréductibles de $Y$;
\item[$\bullet$] arêtes: $E$ est l'ensemble des couples $(v_1,v_2) \in V_1 \times V_2$ tels que $v_2\in Y_{v_1}$.
\end{itemize}
Soit $c_{\Gamma}$ le nombre cyclomatique du graphe $\Gamma$. On pose finalement:
$$n_X= 2 \sum_{v \in \tilde{\mathcal{X}}^{(1)} \setminus X^{(1)}} g_v + c_{\Gamma}.$$
A priori, la quantité $n_X$ dépend de $f$, mais nous verrons dans la suite qu'en fait elle ne dépend que de $X$.

\begin{remarque}\label{deuxgraphes}
Usuellement ce n'est pas le graphe $\Gamma$ qu'on associe à $Y$ mais le graphe $\Gamma_{\text{réd}}$ défini par:
\begin{itemize}
\item[$\bullet$] sommets: $V_{\text{réd}} = Y^{(0)}$;
\item[$\bullet$] arêtes: $E_{\text{réd}}$ est l'ensemble des couples $(v_1,v_2) \in V_{\text{réd}}^2$ tels que $Y_{v_1}$ et $Y_{v_2}$ s'intersectent.
\end{itemize}
Le graphe $\Gamma$ est obtenu à partir de $\Gamma_{\text{réd}}$ est rajoutant un sommet sur chaque arête. En particulier, les deux graphes ont même nombre cyclomatique.
\end{remarque}

\subsection{Les modules $\Lambda$ et $\Upsilon$}

\hspace{4ex} On rappelle que, pour $v \in \tilde{\mathcal{X}}^{(1)}$ et $w \in \tilde{\mathcal{X}}^{(2)}$ , le corps $k(Y_v)_w$ est isomorphe à $k((t))$. Son groupe de Galois absolu est isomorphe à $\hat{\mathbb{Z}}$, et donc on a un isomorphisme naturel $H^1(k(Y_v),\mathbb{Q}/\mathbb{Z}(1)) \cong \mathbb{Q}/\mathbb{Z}$.
On peut alors considérer le morphisme naturel:
$$\phi: \bigoplus_{v \in \tilde{\mathcal{X}}^{(1)} \setminus X^{(1)}} H^1(k(Y_v),\mathbb{Q}/\mathbb{Z}(1)) \rightarrow \bigoplus_{w \in \tilde{\mathcal{X}}^{(2)}} \mathbb{Q}/\mathbb{Z},$$
induit par les morphismes de restriction:
$$H^1(k(Y_v),\mathbb{Q}/\mathbb{Z}(1)) \rightarrow H^1(k(Y_v)_w,\mathbb{Q}/\mathbb{Z}(1)) \cong \mathbb{Q}/\mathbb{Z},$$
pour $v \in Y^{(0)}$ et $w \in Y_v^{(1)}$. On pose $\Lambda = \text{Coker} \; \phi$ et $\Upsilon = \text{Ker} \; \phi$.

\begin{lemma}\label{Lambda}
Le morphisme somme $\Sigma: \bigoplus_{w \in \tilde{\mathcal{X}}^{(2)}} \mathbb{Q}/\mathbb{Z} \rightarrow \mathbb{Q}/\mathbb{Z}$ induit un isomorphisme $\Lambda \cong \mathbb{Q}/\mathbb{Z}$.
\end{lemma}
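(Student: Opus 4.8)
L'idée est de décrire explicitement le conoyau de $\phi$ en utilisant la suite exacte des résidus sur chaque courbe $Y_v$, puis de recoller ces informations grâce au graphe $\Gamma$ et au lemme \ref{graphe}. Pour chaque $v \in Y^{(0)}$, la courbe $Y_v$ est projective lisse connexe sur $k$ algébriquement clos de caractéristique $0$, donc la théorie du corps de classes géométrique (ou simplement la suite de localisation pour $\mathbb{G}_m$ sur $Y_v$) fournit une suite exacte
\begin{equation*}
H^1(k(Y_v),\mathbb{Q}/\mathbb{Z}(1)) \xrightarrow{\ \bigoplus \mathrm{res}_w\ } \bigoplus_{w \in Y_v^{(1)}} \mathbb{Q}/\mathbb{Z} \xrightarrow{\ \Sigma\ } \mathbb{Q}/\mathbb{Z} \rightarrow 0,
\end{equation*}
où la dernière flèche est le morphisme somme (c'est la dualité pour la courbe $Y_v$ sur un corps algébriquement clos : $H^1(k(Y_v),\mathbb{Q}/\mathbb{Z}(1)) = \mathrm{Hom}(\pi_1^{\mathrm{ab}}(k(Y_v)), \mathbb{Q}/\mathbb{Z})$ se surjecte sur le noyau du morphisme somme de $\bigoplus_w \mathbb{Q}/\mathbb{Z}$, via le fait que $\mathrm{Pic}^0 Y_v$ est divisible). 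Ainsi l'image de $\phi$ contient, pour chaque $v$, toutes les familles supportées sur $Y_v^{(1)}$ de somme nulle.

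Il reste donc à comprendre, dans $\bigoplus_{w \in \tilde{\mathcal{X}}^{(2)}} \mathbb{Q}/\mathbb{Z}$, le sous-groupe engendré par l'image de $\phi$, à savoir le sous-groupe engendré par les familles supportées sur un seul $Y_v^{(1)}$ et de somme nulle. D'abord, le morphisme somme global $\Sigma$ s'annule manifestement sur $\mathrm{Im}\,\phi$ (chaque générateur est de somme nulle), donc $\Sigma$ se factorise par un morphisme surjectif $\Lambda \rightarrow \mathbb{Q}/\mathbb{Z}$ ($\Sigma$ est surjectif car, $Y$ étant connexe, il existe au moins un point $w$ ; en fait il suffit de remarquer que $\Sigma$ est déjà surjectif en restriction à un seul $Y_v^{(1)}$ dès que $Y_v^{(1)}$ a au moins deux éléments, et le cas où toutes les $Y_v$ sont de cardinal $\le 1$ se traite à part). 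Pour l'injectivité du morphisme $\Lambda \to \mathbb{Q}/\mathbb{Z}$, on doit montrer que toute famille $(a_w)_{w}$ de somme globale nulle est dans $\mathrm{Im}\,\phi$. C'est ici qu'intervient le lemme \ref{graphe} : on associe à la famille $(a_w)_w$ les éléments $a_{v_2} = \sum_{w} a_w$ où la somme porte sur les $w$ correspondant au sommet $v_2 \in V_2$ (un point fermé intersection de deux composantes), avec la convention symétrique côté $V_1$ ; la condition $\sum_{v \in V_1} a_v = \sum_{v \in V_2} a_v$ se traduit exactement par $\Sigma((a_w)) = 0$, et le lemme \ref{graphe} fournit alors une écriture de $(a_w)$ comme combinaison de familles de somme nulle supportées chacune sur un $Y_v^{(1)}$, ce qui la place dans $\mathrm{Im}\,\phi$.

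\textbf{Point délicat.} Le point qui demande le plus de soin n'est pas le recollement combinatoire — le lemme \ref{graphe} est taillé sur mesure pour cela — mais la traduction correcte entre la structure de graphe biparti $\Gamma$ (où les sommets de $V_2$ sont les points d'intersection de deux composantes) et les points $w \in \tilde{\mathcal{X}}^{(2)}$ apparaissant dans le but de $\phi$. Il faut notamment vérifier que, $f^{-1}(\mathfrak{m})$ étant un diviseur à croisements normaux, chaque point fermé de $Y$ appartient à au plus deux composantes, de sorte que les points d'intersection correspondent bien aux sommets de $V_2$ et que la décomposition $\bigoplus_{w \in \tilde{\mathcal{X}}^{(2)}} \mathbb{Q}/\mathbb{Z} = \bigoplus_{v \in V_1} \bigl(\bigoplus_{w \in Y_v^{(1)}} \mathbb{Q}/\mathbb{Z}\bigr) / (\text{identifications aux points doubles})$ est compatible avec la définition de $\Gamma$ ; c'est précisément ce qu'encode le fait que les arêtes de $\Gamma$ soient les couples $(v_1, v_2)$ avec $v_2 \in Y_{v_1}$. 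Une fois cette correspondance posée proprement, l'application du lemme \ref{graphe} donne la surjectivité de $\phi$ sur $\mathrm{Ker}\,\Sigma$, donc l'isomorphisme $\Lambda \cong \mathbb{Q}/\mathbb{Z}$ annoncé.
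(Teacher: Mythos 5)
Votre preuve est correcte et suit essentiellement la même démarche que celle de l'article : la suite exacte des résidus sur chaque courbe $Y_v$ (que l'article tire du théorème 2.9 de \cite{Izq1} dans le cas $d=-1$) identifie $\text{Im}\,\phi_v$ au noyau de la somme sur $Y_v^{(1)}$ et donne la surjectivité, puis le lemme \ref{graphe} appliqué au graphe biparti $\Gamma$ fournit l'inclusion $\text{Ker}\,\Sigma \subseteq \text{Im}\,\phi$. Veillez seulement à deux détails que l'article explicite : la convention symétrique côté $V_1$ doit être $a_{v_1} = -\sum_{w \in Y_{v_1}^{(1)} \setminus V_2} \alpha_w$ (le signe moins est indispensable pour que la condition $\sum_{v \in V_1} a_v = \sum_{v \in V_2} a_v$ équivaille à $\Sigma(\alpha)=0$), et la connexité de $\Gamma$, requise pour appliquer le lemme \ref{graphe}, se déduit du principe de connexité de Zariski.
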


\begin{proof}
Le morphisme $\phi$ est induit par les morphismes:
$$\phi_v: H^1(k(Y_v),\mathbb{Q}/\mathbb{Z}(1)) \rightarrow \bigoplus_{w \in Y_v^{(1)}} H^1(k(Y_v)_w,\mathbb{Q}/\mathbb{Z}(1)).$$
Or, d'après le théorème 2.9 de \cite{Izq1} dans le cas $d=-1$, on a une suite exacte:\\
\begin{equation}\label{cbecomplexe}
\xymatrix{
H^1(k(Y_v),\mathbb{Q}/\mathbb{Z}(1)) \ar[r]^-{\phi_v} & \bigoplus_{w \in Y_v^{(1)}} H^1(k(Y_v)_w,\mathbb{Q}/\mathbb{Z}(1)) \ar[r]^-{\Sigma} & \mathbb{Q}/\mathbb{Z} \ar[r] & 0.
}\tag{$\star$}
\end{equation}
Cela montre immédiatement que $\Sigma$ induit un morphisme surjectif:
$$\Lambda \rightarrow \mathbb{Q}/\mathbb{Z}.$$
Reste à prouver qu'il est injectif. Pour ce faire, on se donne $\alpha=(\alpha_w)_{w \in \tilde{\mathcal{X}}^{(2)}} \in \text{Ker}\; \Sigma$ et on considère le graphe biparti $\Gamma$. Pour $v_1 \in V_1$, on pose $a_{v_1} = -\sum_{w \in Y_{v_1}^{(1)}\setminus V_2} \alpha_w$. Pour $v_2 \in V_2$, on pose $a_{v_2} = \alpha_{v_2}$. Comme $(\alpha_w) \in \text{Ker} \; \Sigma$, on vérifie immédiatement que: $$\sum_{v \in V_1} a_v =\sum_{v \in V_2} a_v.$$
De plus, le graphe $\Gamma$ est connexe d'après le principe de connexité de Zariski (corollaire 5.3.16 de \cite{Liu}). Par conséquent, le lemme \ref{graphe} montre qu'il existe $(x_e)_{e \in E} \in (\mathbb{Q}/\mathbb{Z})^E$ tel que, pour tout $v \in V$, on a: $$\sum_{e \in I(v)} x_e = a_v.$$
Pour $v_1 \in V_1$, on considère la famille $y_{v_1}=(y_{v_1,w})_{w \in \tilde{\mathcal{X}}^{(2)}}$ définie par:
\begin{itemize}
\item[$\bullet$] $y_{v_1,w} = 0$ si $w \not\in Y_{v_1}$;
\item[$\bullet$] $y_{v_1,w} = \alpha_w$ si $w \in Y_{v_1} \setminus V_2$;
\item[$\bullet$] $y_{v_1,w} = x_{(v_1,w)}$ si $(v_1,w) \in E$.
\end{itemize}
On remarque alors que, pour chaque $v_1 \in V_1$, on a $\sum_{w \in \tilde{\mathcal{X}}^{(2)}} y_{v_1,w} = 0$. Par conséquent, la suite exacte (\ref{cbecomplexe}) montre que $y \in \text{Im}(\phi_{v_1})$. Comme $\sum_{v \in V_1} y_v = \alpha$, on en déduit que $\alpha \in \text{Im}(\phi)$ et $\Sigma$ induit bien un isomorphisme $\Lambda \rightarrow \mathbb{Q}/\mathbb{Z}$.
\end{proof}

\begin{lemma}\label{Upsilon}
Le groupe abélien $\Upsilon$ est isomorphe à $(\mathbb{Q}/\mathbb{Z})^{n_X}$.
\end{lemma}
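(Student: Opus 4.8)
The plan is to factor $\phi$ through the ``obvious'' intermediate group and then invoke the two graph lemmas. Write $\psi=\bigoplus_{v}\phi_v\colon \bigoplus_{v\in Y^{(0)}}H^1(k(Y_v),\mathbb{Q}/\mathbb{Z}(1))\to \bigoplus_{v\in Y^{(0)}}\bigl(\bigoplus_{w\in Y_v^{(1)}}H^1(k(Y_v)_w,\mathbb{Q}/\mathbb{Z}(1))\bigr)$ for the direct sum of the maps appearing in $(\ref{cbecomplexe})$, and let $\rho$ be the natural ``collapsing'' map from the target of $\psi$ to $\bigoplus_{w\in\tilde{\mathcal{X}}^{(2)}}\mathbb{Q}/\mathbb{Z}$ which, at a closed point $w$ of $Y$, sends a family to the sum of the (at most two, by the normal crossings hypothesis) components indexed by pairs $(v,w)$ with $w\in Y_v$. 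Then $\phi=\rho\circ\psi$, and since $\psi^{-1}(\text{Ker}\,\rho)=\text{Ker}\,\phi$ one gets an exact sequence of abelian groups
$$0\to \text{Ker}\,\psi \to \Upsilon \to (\text{Ker}\,\rho)\cap(\text{Im}\,\psi)\to 0.$$

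First I would compute $\text{Ker}\,\psi=\bigoplus_v \text{Ker}\,\phi_v$. Extending $(\ref{cbecomplexe})$ to the left (equivalently, using the Kummer sequence on $Y_v$ and on $\text{Spec}\,k(Y_v)$ together with absolute purity: $H^1_w(Y_v,\mu_n)=0$ forces injectivity and $H^2_w(Y_v,\mu_n)\cong\mathbb{Z}/n$), one identifies $\text{Ker}\,\phi_v$ with the image of the injection $H^1(Y_v,\mathbb{Q}/\mathbb{Z}(1))\hookrightarrow H^1(k(Y_v),\mathbb{Q}/\mathbb{Z}(1))$. Since $k^\times$ is divisible, the Kummer sequence gives $H^1(Y_v,\mu_n)\cong\text{Pic}(Y_v)[n]=\text{Pic}^0(Y_v)[n]\cong(\mathbb{Z}/n)^{2g_v}$, whence $H^1(Y_v,\mathbb{Q}/\mathbb{Z}(1))=\varinjlim_n\text{Pic}^0(Y_v)[n]\cong(\mathbb{Q}/\mathbb{Z})^{2g_v}$. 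Therefore $\text{Ker}\,\psi\cong(\mathbb{Q}/\mathbb{Z})^{2\sum_v g_v}$.

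Next I would analyse $(\text{Ker}\,\rho)\cap(\text{Im}\,\psi)$. By $(\ref{cbecomplexe})$, $\text{Im}\,\phi_v=\text{Ker}\,\Sigma_{Y_v^{(1)}}$, so an element of $\text{Im}\,\psi$ is a family $(x_{v,w})$ with $\sum_w x_{v,w}=0$ for each $v$; lying in $\text{Ker}\,\rho$ forces $x_{v,w}=0$ whenever $w$ lies on a single component of $Y$, and $x_{v_1,w}+x_{v_2,w}=0$ whenever $w$ is the intersection point of $Y_{v_1}$ and $Y_{v_2}$ (no signs intervene, since $\rho$ is built from restriction maps; this is where strict normal crossings is used). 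The surviving coordinates are thus indexed by the edges of $\Gamma$ — a pair $(v,w)$ with $w$ a double point of $Y$ lying on $Y_v$ being exactly an edge of $\Gamma$ — and under the bijections ``edges at $v$'' $\leftrightarrow$ ``neighbours of $v$'' the two families of relations become precisely membership in $\bigoplus_{v\in V_1}\text{Ker}\,\Sigma_{N(v)}$ and in $\bigoplus_{v\in V_2}\text{Ker}\,\Sigma_{N(v)}$ inside $\bigoplus_E\mathbb{Q}/\mathbb{Z}$. Hence $(\text{Ker}\,\rho)\cap(\text{Im}\,\psi)\cong\Theta_{\mathbb{Q}/\mathbb{Z}}(\Gamma)$. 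As $\Gamma$ is finite, bipartite and connected (connectedness by Zariski's connectedness principle, corollaire 5.3.16 of \cite{Liu}, exactly as in the proof of Lemma \ref{Lambda}; the case $Y=\emptyset$ being trivial), Lemma \ref{graphe2}(ii) gives $\Theta_{\mathbb{Q}/\mathbb{Z}}(\Gamma)\cong(\mathbb{Q}/\mathbb{Z})^{c_\Gamma}$.

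Finally, the displayed exact sequence exhibits $\Upsilon$ as an extension of $(\mathbb{Q}/\mathbb{Z})^{c_\Gamma}$ by $(\mathbb{Q}/\mathbb{Z})^{2\sum_v g_v}$; since $\mathbb{Q}/\mathbb{Z}$ is an injective $\mathbb{Z}$-module one has $\text{Ext}^1_{\mathbb{Z}}((\mathbb{Q}/\mathbb{Z})^{c_\Gamma},(\mathbb{Q}/\mathbb{Z})^{2\sum_v g_v})=0$, so the extension splits and $\Upsilon\cong(\mathbb{Q}/\mathbb{Z})^{2\sum_v g_v+c_\Gamma}=(\mathbb{Q}/\mathbb{Z})^{n_X}$. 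The genuinely delicate point is the combinatorial identification of $(\text{Ker}\,\rho)\cap(\text{Im}\,\psi)$ with $\Theta_{\mathbb{Q}/\mathbb{Z}}(\Gamma)$: one must keep careful track of which closed points of $Y$ lie on one versus two components, of the bijection between edges of $\Gamma$ and the surviving coordinates, and of the absence of signs — all of which rely on the normal-crossings hypothesis (smooth components, no triple points) ensuring that $\rho$ is, fibrewise, an honest sum of at most two terms.
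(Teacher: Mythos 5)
Your proof is correct and follows essentially the same route as the paper: the paper also extracts the exact sequence $0 \to \bigoplus_v \operatorname{Ker}\phi_v \to \Upsilon \to \Theta \to 0$ from $(\ref{cbecomplexe})$, identifies $\operatorname{Ker}\phi_v$ with $H^1(Y_v,\mathbb{Q}/\mathbb{Z}(1)) \cong (\mathbb{Q}/\mathbb{Z})^{2g_v}$, and recognizes $\Theta$ (your $(\operatorname{Ker}\rho)\cap(\operatorname{Im}\psi)$) as $\Theta_{\mathbb{Q}/\mathbb{Z}}(\Gamma) \cong (\mathbb{Q}/\mathbb{Z})^{c_\Gamma}$ via Lemme \ref{graphe2}. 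You merely spell out in more detail the combinatorial identification with $\Theta_{\mathbb{Q}/\mathbb{Z}}(\Gamma)$ and the splitting of the extension, which the paper leaves implicit.
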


\begin{proof}
En reprenant les notations du lemme précédent et en utilisant la suite (\ref{cbecomplexe}), on remarque que l'on dispose d'une suite exacte:
$$0 \rightarrow \bigoplus_{v \in \tilde{\mathcal{X}}^{(1)} \setminus X^{(1)}} \text{Ker} \; \phi_v \rightarrow \Upsilon \rightarrow \Theta \rightarrow 0,$$
où: $$\Theta = \text{Ker}\left( \Sigma: \bigoplus_{v \in \tilde{\mathcal{X}}^{(1)} \setminus X^{(1)}} \text{Ker} \left( \Sigma: \bigoplus_{w \in Y_v^{(1)}} \mathbb{Q}/\mathbb{Z} \rightarrow \mathbb{Q}/\mathbb{Z}\right) \rightarrow \bigoplus_{w \in \mathcal{X}^{(1)}} \mathbb{Q}/\mathbb{Z} \right).$$
Or:
\begin{itemize}
\item[$\bullet$] pour $v \in \tilde{\mathcal{X}}^{(1)} \setminus X^{(1)}$, le morphisme $\phi_v$ s'identifie au morphisme:
$$H^1(k(Y_v),\mathbb{Q}/\mathbb{Z}(1)) \rightarrow \bigoplus_{w \in Y_v^{(1)}} H^0(k,\mathbb{Q}/\mathbb{Z})$$
 induit par les résidus (dont on peut trouver la définition dans l'appendice du chapitre II de \cite{Ser}), et donc $\text{Ker} \; \phi_v = H^1(Y_v, \mathbb{Q}/\mathbb{Z}(1)) \cong (\mathbb{Q}/\mathbb{Z})^{2g_v}$;
 \item[$\bullet$] le groupe abélien $\Theta$ est en fait $\Theta_{\mathbb{Q}/\mathbb{Z}}(\Gamma)$ et donc, d'après le lemme \ref{graphe2}, il est isomorphe à $(\mathbb{Q}/\mathbb{Z})^{c_{\Gamma}}$.
\end{itemize}
On en déduit que $\Upsilon \cong (\mathbb{Q}/\mathbb{Z})^{n_X}$.
\end{proof}

\subsection{Accouplement d'Artin-Verdier}

\hspace{4ex} Fixons un ouvert $U$ de $X$ non vide, ainsi qu'un faisceau constructible $F$ sur $U$. Soit $j: U \rightarrow X$ l'immersion ouverte et posons $H^r_c(U,F) = H^r(X,j_!F)$ pour $r \geq 0$. Si $D(U)$ désigne la catégorie dérivée bornée des faisceaux sur le petit site étale de $U$, en identifiant $\text{Ext}^r_U(F,\mathbb{G}_m) = \text{Hom}_{D(U)}(F,\mathbb{G}_m[r])$, on obtient un accouplement naturel:
$$ AV: \text{Ext}^r_U(F,\mathbb{G}_m) \times H^{3-r}_c(U,F) \rightarrow H^3_c(U,\mathbb{G}_m).$$

\begin{remarque}
En prenant $K=K_0$, on obtient un accouplement:
$$ AV: \text{Ext}^r_U(F,\mathbb{G}_m) \times H^{3-r}(X_0,j_!F) \rightarrow H^3_c(U,\mathbb{G}_m)$$
pour chaque immersion ouverte $j: U \hookrightarrow X_0$ avec $U$ non vide et chaque faisceau constructible $F$ sur $U$.
\end{remarque}

\begin{proposition} \textbf{(Cohomologie de $\mathbb{G}_m$)} \label{G_m}
\begin{itemize}
\item[(i)] On a une suite exacte:
$$0 \rightarrow H^2(U,\mathbb{G}_m) \rightarrow \text{Br}(K) \rightarrow \bigoplus_{v\in U^{(1)}} \text{Br}(K_v) \rightarrow H^3(U,\mathbb{G}_m) \rightarrow 0.$$
De plus, $H^r(U,\mathbb{G}_m)=0$ pour $r>3$.
\item[(ii)] Le groupe $H^3_c(U,\mathbb{G}_m)$ est isomorphe à $\mathbb{Q}/\mathbb{Z}$. 
\item[(iii)] Le groupe $\text{Br} \; X$ est isomorphe à $(\mathbb{Q}/\mathbb{Z})^{n_X}$.
\item[(iv)] Pour tout entier naturel non nul $m$, les groupes ${_m}\text{Pic}\; X$ et $\text{Pic}\; X /m$ sont finis et on a:
$$\frac{|{_m}\text{Pic}\; X|}{|\text{Pic} \; X /m|} = m^{n_X}.$$
\end{itemize}
\end{proposition}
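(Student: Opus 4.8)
The plan is to prove the four parts in turn, systematically reducing everything to the cohomology of $\mathbb{G}_m$ and $\mu_m$ on the regular Dedekind scheme $X$, on the curve $Y$, and on the regular surface $\tilde{\mathcal{X}}$.

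\emph{Part (i).} Here $U$ is a regular Dedekind scheme with function field $K$ and generic point $g\colon\eta\hookrightarrow U$, and the divisor exact sequence $0\to\mathbb{G}_m\to g_*\mathbb{G}_{m,\eta}\to\bigoplus_{v\in U^{(1)}}(i_v)_*\mathbb{Z}\to 0$ reduces matters to two computations. First, $R^qg_*\mathbb{G}_m=0$ for $q\geq 1$: the stalk at $v$ is the cohomology of $\mathbb{G}_m$ over the fraction field of a strictly henselian discrete valuation ring with separably closed residue field, i.e.\ over a field with absolute Galois group $\hat{\mathbb{Z}}$, and $H^q$ of $\mathbb{G}_m$ there is torsion (a colimit of $H^q$ over finite subextensions, each killed by the degree) while being uniquely divisible for $q\geq 1$ (Kummer plus $\mathrm{cd}\leq 1$), hence $0$. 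Second, $H^r(K,\mathbb{G}_m)=0$ for $r\geq 3$ by the same torsion/uniquely-divisible argument and $\mathrm{cd}(K)\leq 2$, while $H^r(k(v),\mathbb{Z})$ equals $\mathbb{Z},0,H^1(k(v),\mathbb{Q}/\mathbb{Z}),0,\dots$ for $r=0,1,2,\geq 3$ since $\mathrm{cd}(k(v))\leq 1$. Substituting into the long exact sequence and using the residue isomorphisms $H^1(k(v),\mathbb{Q}/\mathbb{Z})\cong\mathrm{Br}(K_v)$ (valid because $\mathrm{cd}(k(v))\leq 1$) gives the displayed four-term sequence and the vanishing in degrees $>3$.

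\emph{Parts (ii) and (iii).} From $0\to j_!\mathbb{G}_{m,U}\to\mathbb{G}_{m,X}\to i_*\mathbb{G}_{m,X\setminus U}\to 0$ together with $H^r(k(v),\mathbb{G}_m)=0$ for $r\geq 2$ (again $\mathrm{cd}\leq 1$, plus the torsion/divisibility argument in degree $3$) one gets $H^3_c(U,\mathbb{G}_m)\cong H^3(X,\mathbb{G}_m)$, so it suffices to treat $X$. Since $\mathcal{O}_K$ satisfies the same standing hypotheses as $R_0$, the exact sequence of Remarque~2.3 of \cite{CTPS} applies to $K$: using $\tilde{\mathcal{X}}^{(1)}=X^{(1)}\sqcup Y^{(0)}$ and $\tilde{\mathcal{X}}^{(2)}=Y^{(1)}$ it reads $0\to\mathrm{Br}\,K\xrightarrow{(\mathrm{res}_X,\mathrm{res}_Y)}\big(\bigoplus_{v\in X^{(1)}}\mathrm{Br}\,K_v\big)\oplus\big(\bigoplus_{v\in Y^{(0)}}\mathrm{Br}\,K_v\big)\xrightarrow{\psi}\bigoplus_{w\in Y^{(1)}}\mathbb{Q}/\mathbb{Z}\to 0$, and, after the residue identifications $\mathrm{Br}\,K_v\cong H^1(k(Y_v),\mathbb{Q}/\mathbb{Z}(1))$ for $v\in Y^{(0)}$, the restriction of $\psi$ to the second summand is exactly the morphism $\phi$ defined above (a compatibility of iterated residues). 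By (i) applied to $X$ one has $\mathrm{Br}\,X=\ker(\mathrm{res}_X)$ and $H^3(X,\mathbb{G}_m)=\mathrm{coker}(\mathrm{res}_X)$; since $\mathrm{Br}\,K=\ker\psi$, a short diagram chase (the first summand being a complement of the second) identifies $\mathrm{Br}\,X$ with $\ker\phi=\Upsilon$ and $H^3(X,\mathbb{G}_m)$ with $\mathrm{coker}\,\phi=\Lambda$. Lemmas \ref{Lambda} and \ref{Upsilon} now give $H^3(X,\mathbb{G}_m)\cong\mathbb{Q}/\mathbb{Z}$ (hence (ii)) and $\mathrm{Br}\,X\cong(\mathbb{Q}/\mathbb{Z})^{n_X}$ (hence (iii), and in particular $n_X$ depends only on $X$).

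\emph{Part (iv).} I would feed the Kummer sequence on $X$ into the above. As $\mathcal{O}_X^\times=\mathcal{O}_K^\times=k^\times\times(1+\mathfrak{m})$ with $k^\times$ divisible and $1+\mathfrak{m}$ uniquely divisible (Hensel's lemma, residue characteristic $0$), we get $\mathcal{O}_X^\times/m=0$, so $H^1(X,\mu_m)\cong{}_m\mathrm{Pic}\,X$, and $0\to\mathrm{Pic}\,X/m\to H^2(X,\mu_m)\to{}_m\mathrm{Br}\,X\to 0$ with ${}_m\mathrm{Br}\,X\cong(\mathbb{Z}/m)^{n_X}$ by (iii); using (i)--(iii) one also finds $H^0(X,\mu_m)\cong\mathbb{Z}/m\cong H^3(X,\mu_m)$ and $H^r(X,\mu_m)=0$ for $r\geq 4$. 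Once finiteness of all these groups is known, the asserted identity becomes equivalent to $|H^1(X,\mu_m)|=|H^2(X,\mu_m)|$, i.e.\ to $\chi(X,\mu_m)=1$, where $\chi$ denotes the alternating product of the orders of the $\mu_m$-cohomology groups (legitimate, as they are all finite and vanish in large degree). To compute $\chi(X,\mu_m)$ I would use the localization sequence for $X\hookrightarrow\tilde{\mathcal{X}}$ with reduced complement $Y$, which gives $\chi(X,\mu_m)=\chi(\tilde{\mathcal{X}},\mu_m)/\chi_Y(\tilde{\mathcal{X}},\mu_m)$. Since $\tilde{\mathcal{X}}\to\mathcal{X}$ is proper and $\mathcal{O}_K$ is strictly henselian, proper base change gives $H^r(\tilde{\mathcal{X}},\mu_m)\cong H^r(Y,\mu_m)$; these I compute from Kummer on $Y$, using $\mathcal{O}(Y)^\times=k^\times$, $\mathrm{Br}\,Y=0$ and $H^{\geq 3}(Y,\mathbb{G}_m)=0$ (pass to the normalization $\nu\colon\tilde{Y}\to Y$: the cokernel of $\mathbb{G}_{m,Y}\to\nu_*\mathbb{G}_{m,\tilde{Y}}$ is a skyscraper, $R^{>0}\nu_*\mathbb{G}_m=0$, and $\mathrm{Br}\,Y_v=0$ by Tsen), together with the description of $\mathrm{Pic}\,Y$ from the introduction — noting that its integer $t$ equals the cyclomatic number $c_\Gamma$, since each of the $|\Sigma|$ nodes contributes $1$ to $\mu$, so $t=\mu-c+1=|\Sigma|-c+1=c_\Gamma$ with $c=|Y^{(0)}|$ and $\Sigma$ the set of nodes. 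This yields $|H^r(Y,\mu_m)|=m,\,m^{n_X},\,m^{c},\,1,\dots$ for $r=0,1,2,\geq 3$, hence $\chi(\tilde{\mathcal{X}},\mu_m)=m^{1+c-n_X}$. For the local cohomology I would use absolute purity to compute the sheaves $\underline{H}^q_Y(\mu_m)$: their stalk at a smooth point of $Y$ is $\mathbb{Z}/m$ in degree $2$ and zero otherwise, while at a node a Mayer--Vietoris computation for the two branches (each contributing $\mathbb{Z}/m$ in degree $2$, their intersection contributing $\mathbb{Z}/m$ in degree $4$) gives stalk $(\mathbb{Z}/m)^2$ in degree $2$ and $\mathbb{Z}/m$ in degree $3$; therefore $\underline{H}^2_Y(\mu_m)\cong\nu_*(\mathbb{Z}/m)$, $\underline{H}^3_Y(\mu_m)\cong(i_\Sigma)_*(\mathbb{Z}/m)$, the rest vanish, and the local-to-global spectral sequence gives $\chi_Y(\tilde{\mathcal{X}},\mu_m)=\chi(\tilde{Y},\mathbb{Z}/m)\cdot\chi(\Sigma,\mathbb{Z}/m)^{-1}=m^{2c-2\sum_vg_v-|\Sigma|}$. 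Putting the two together, $\chi(X,\mu_m)=m^{(1+c-n_X)-(2c-2\sum_vg_v-|\Sigma|)}$, and the exponent vanishes because $n_X=2\sum_vg_v+c_\Gamma$ and $c_\Gamma=|\Sigma|-c+1$; finiteness of all groups involved is visible from these computations. The step I expect to be the main obstacle is exactly this local cohomology computation at the nodes of $Y$: it is there that the non-smoothness of the special fibre — the "singularités" advertised in the introduction — genuinely has to be confronted, whereas parts (i)--(iii) are essentially formal once the relevant cohomological dimensions and the \cite{CTPS} sequence are available.
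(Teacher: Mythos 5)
Your proof is correct, and parts (i)--(iii) follow essentially the paper's own route: the divisor sequence $0\to\mathbb{G}_m\to g_*\mathbb{G}_m\to\mathrm{Div}_U\to 0$ with the vanishing of $R^qg_*\mathbb{G}_m$ for (i), then the Colliot-Th\'el\`ene--Parimala--Suresh sequence on the regular model $\tilde{\mathcal{X}}$ combined with the lemmes \ref{Lambda} et \ref{Upsilon} to identify $H^3(X,\mathbb{G}_m)$ avec $\Lambda$ et $\mathrm{Br}\,X$ avec $\Upsilon$ (your reduction of $H^3_c(U,\mathbb{G}_m)$ to $H^3(X,\mathbb{G}_m)$ uses the excision triangle for $j_!$ on $X$ rather than the localization sequence of \cite{HS1}, but this is an equivalent formality).

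For (iv) you take a genuinely different route. The paper never touches local cohomology along $Y$: it writes the divisor sequence on $\tilde{\mathcal{X}}$ relative to $Y$ to get the short exact sequence $0\to\bigoplus_{v\in Y^{(0)}}\mathbb{Z}\to\mathrm{Pic}\,\tilde{\mathcal{X}}\to\mathrm{Pic}\,X\to 0$ (using divisibility of $\mathcal{O}_K^{\times}$), identifies ${}_m\mathrm{Pic}\,\tilde{\mathcal{X}}$ and $\mathrm{Pic}\,\tilde{\mathcal{X}}/m$ with the corresponding groups for $Y$ via proper base change, Kummer and $\mathrm{Br}\,\tilde{\mathcal{X}}=0$, and then reads off the ratio from the explicit structure of $\mathrm{Pic}\,Y$ (with the same identification $t=c_\Gamma$ that you make). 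You instead recast the identity as $\chi(X,\mu_m)=1$ and compute this Euler characteristic from the localization sequence for $X\hookrightarrow\tilde{\mathcal{X}}$, which forces you through absolute purity and the Mayer--Vietoris computation of $\underline{H}^q_Y(\mu_m)$ at the nodes. Your arithmetic checks out (the exponent $1-c-n_X+2\sum_v g_v+|\Sigma|$ does vanish), but you pay for it: you need Gabber-type purity on the excellent regular scheme $\tilde{\mathcal{X}}$, and the identification $\underline{H}^2_Y(\mu_m)\cong\nu_*(\mathbb{Z}/m)$, $\underline{H}^3_Y(\mu_m)\cong(i_\Sigma)_*(\mathbb{Z}/m)$ must be promoted from a stalk computation to a sheaf isomorphism (the natural maps from $\underline{H}^2_{Y_v}$ do realize it, but this should be said). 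The paper's route confronts the singularities of $Y$ only through the combinatorics of $\mathrm{Pic}\,Y$ (Liu, lemme 7.5.18), which is why it gets away with no purity at all; your route is heavier but has the merit of packaging the statement as an Euler-characteristic identity, which is stable under the various d\'evissages and makes transparent why only $n_X=2\sum g_v+c_\Gamma$ survives. Two small points of hygiene: the sheaf you quotient by in the reduction of (ii) is $i_*i^*\mathbb{G}_{m,X}$ (stalk the units of the strict henselization), not $\mathbb{G}_m$ of the residue field, though the vanishing you need holds for both; and the formula $c_\Gamma=|E|-|V|+1$ uses the connectedness of $\Gamma$, which the paper derives from Zariski's connectedness principle.
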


\begin{remarque}
En particulier, $n_X$ ne dépend que de $X$.
\end{remarque}

\begin{proof} 
\begin{itemize}
\item[(i)] On dispose d'une suite exacte de faisceaux sur $U$: $$0 \rightarrow \mathbb{G}_m \rightarrow g_*\mathbb{G}_m \rightarrow \text{Div}_U \rightarrow 0,$$
où $g:  \eta \hookrightarrow U$ désigne l'inclusion du point générique et $\text{Div}_U$ le faisceau des diviseurs sur $U$ (le lecteur pourra trouver sa définition dans l'exemple II.3.9 de \cite{MilADT}). La suite exacte longue associée s'écrit:
$$ ... \rightarrow H^r(U,\mathbb{G}_m) \rightarrow H^r(U,g_*\mathbb{G}_m) \rightarrow H^r(U,\text{Div}_U) \rightarrow ...$$
Si $i_v$ désigne l'immersion fermée $\text{Spec} \; (k(v)) \hookrightarrow U$, on a $\text{Div}_U = \bigoplus_{v \in U^{(1)}} {i_v}_*\mathbb{Z}$, et donc, comme $i_v$ est un morphisme fini: 
$$H^r(U,\text{Div}_U) = \bigoplus_{v \in U^{(1)}} H^r(U,{i_v}_*\mathbb{Z})  \cong \bigoplus_{v \in U^{(1)}} H^r(k(v),\mathbb{Z}).$$
Par ailleurs, pour $s>0$, la tige de $R^sg_*\mathbb{G}_m$ en un point géométrique $\overline{\eta}$ d'image $\eta$ est $H^s(K^s,\mathbb{G}_m) = 0$ et, pour $v \in U \setminus \{ \eta \}$, la tige de $R^sg_*\mathbb{G}_m$ en un point géométrique $\overline{v}$ d'image $v$ est $H^s(K_v^{nr},\mathbb{G}_m)$, qui vaut 0 pour $s=1$ d'après le théorème de Hilbert 90 et qui vaut 0 pour $s>1$ parce que $K_v^{nr}$ est un corps de dimension cohomologique au plus 1 (exemple II.3.3.c de \cite{Ser}). On en déduit que $R^sg_*\mathbb{G}_m = 0$. La suite spectrale de Leray:
$$H^r(U,R^sg_*\mathbb{G}_m) \Rightarrow H^{r+s}(K,\mathbb{G}_m)$$
fournit alors des isomorphismes $H^r(U,g_*\mathbb{G}_m) \cong H^r(K,\mathbb{G}_m)$.
Par conséquent, on obtient la suite exacte:
$$0 \rightarrow H^2(U,\mathbb{G}_m) \rightarrow \text{Br}(K) \rightarrow \bigoplus_{v\in U^{(1)}} \text{Br}(K_v) \rightarrow H^3(U,\mathbb{G}_m) \rightarrow 0$$
ainsi que la trivialité de $H^r(U,\mathbb{G}_m)$ pour $r>3$ puisque $K$ est de dimension cohomologique 2.
\item[(ii)] Le schéma $\tilde{\mathcal{X}}$ étant régulier, d'après la remarque 2.3 de \cite{CTPS}, il existe une suite exacte:
\begin{equation}\label{suitebrauer}
0 \rightarrow \text{Br} \; K \rightarrow \bigoplus_{v \in \tilde{\mathcal{X}}^{(1)}} \text{Br} \; K_v \rightarrow \bigoplus_{w \in \tilde{\mathcal{X}}^{(2)}} \; \mathbb{Q}/\mathbb{Z} \rightarrow 0. \tag{$\star \star$}
\end{equation}
On obtient alors une suite exacte:
$$ \text{Br} \; K \rightarrow \bigoplus_{v \in X^{(1)}} \text{Br} \; K_v \rightarrow \Lambda \rightarrow 0,$$
où $\Lambda \cong \mathbb{Q}/\mathbb{Z}$ d'après le lemme \ref{Lambda}. Avec (i), cela montre alors que:
\begin{itemize}
\item[$\bullet$] si $U=X$, alors $H^3_c(X,\mathbb{G}_m) = H^3(X,\mathbb{G}_m)=\mathbb{Q}/\mathbb{Z}$;
\item[$\bullet$] si $U \neq X$, alors $\text{Br} \; K$ se surjecte sur $\bigoplus_{v \in U^{(1)}} \text{Br} \; K_v$ et donc $H^3(U,\mathbb{G}_m)=0$. Cela permet d'obtenir une suite exacte:
$$H^2(U,\mathbb{G}_m) \rightarrow \bigoplus_{v \in X \setminus U} \text{Br} \; K_v \rightarrow \mathbb{Q}/\mathbb{Z} \rightarrow 0.$$
Il suffit alors d'écrire la suite exacte de localisation (proposition 3.1.(1) de \cite{HS1}):
$$0 \rightarrow H^2_c(U,\mathbb{G}_m) \rightarrow H^2(U,\mathbb{G}_m) \rightarrow \bigoplus_{v \in X \setminus U} \text{Br}\; K_v\rightarrow  H^3_c(U,\mathbb{G}_m) \rightarrow 0$$
pour conclure que $H^3_c(U,\mathbb{G}_m) \cong \mathbb{Q}/\mathbb{Z}$.
\end{itemize}
\item[(iii)] D'après (i), on a une suite exacte:
$$0 \rightarrow H^2(X,\mathbb{G}_m) \rightarrow \text{Br}(K) \rightarrow \bigoplus_{v\in X^{(1)}} \text{Br}(K_v) \rightarrow H^3(X,\mathbb{G}_m) \rightarrow 0.$$
En exploitant la suite (\ref{suitebrauer}), on obtient alors une suite exacte:
$$0 \rightarrow \text{Br} \; X \rightarrow \bigoplus_{v \in \tilde{\mathcal{X}}^{(1)} \setminus X^{(1)}} \text{Br} \; K_v \rightarrow \bigoplus_{w \in \tilde{\mathcal{X}}^{(2)}} \; \mathbb{Q}/\mathbb{Z} \rightarrow H^3(X,\mathbb{G}_m) \rightarrow 0.$$
Donc, d'après le lemme \ref{Upsilon}, on a $\text{Br} \; X = \Upsilon \cong (\mathbb{Q}/\mathbb{Z})^{n_X}$.
\item[(iv)] Si $j: X \hookrightarrow \tilde{\mathcal{X}}$ désigne l'immersion ouverte et $\text{Div}_{\tilde{\mathcal{X}}}^{Y}$ le faisceau des diviseurs de $\tilde{\mathcal{X}}$ à support dans $Y=f^{-1}(\mathfrak{m})$ (c'est-à-dire le faisceau associé au préfaisceau qui à un morphisme étale $\varphi: T \rightarrow \tilde{\mathcal{X}}$ associe le groupe des diviseurs de $\tilde{\mathcal{X}}$ à support dans $\varphi^{-1}(Y)$), la suite exacte $0 \rightarrow \mathbb{G}_m \rightarrow j_* \mathbb{G}_m \rightarrow \text{Div}_{\tilde{\mathcal{X}}}^{Y} \rightarrow 0$ induit une suite exacte longue:
$$0 \rightarrow \mathcal{O}_{\tilde{\mathcal{X}}}(\tilde{\mathcal{X}})^{\times} \rightarrow \mathcal{O}_X(X)^{\times} \rightarrow \bigoplus_{v \in Y^{(0)}} \mathbb{Z} \rightarrow \text{Pic} \; \tilde{\mathcal{X}} \rightarrow \text{Pic} \; X \rightarrow 0.$$
Comme $\mathcal{O}_X(X) = \mathcal{O}_K$ est hensélien et la caractéristique résiduelle de $K$ est nulle, le lemme de Hensel montre que $\mathcal{O}_X(X)^{\times}$ est divisible. Par conséquent, le morphisme $\mathcal{O}_X(X)^{\times} \rightarrow \bigoplus_{v \in Y^{(0)}} \mathbb{Z}$ est nul, et on a une suite exacte:
\begin{equation}\label{Pic}
0 \rightarrow \bigoplus_{v \in Y^{(0)}} \mathbb{Z} \rightarrow \text{Pic} \; \tilde{\mathcal{X}} \rightarrow \text{Pic} \; X \rightarrow 0
\tag{$\dagger$} 
\end{equation}
et un isomorphisme:
$$\mathcal{O}_{\tilde{\mathcal{X}}}(\tilde{\mathcal{X}})^{\times} \cong \mathcal{O}_X(X)^{\times}.$$
Cela entraîne en particulier que le groupe $\mathcal{O}_{\tilde{\mathcal{X}}}(\tilde{\mathcal{X}})^{\times}$ est divisible. Comme le groupe de Brauer de $\tilde{\mathcal{X}}$ est trivial (d'après le corollaire 1.10 de \cite{COP}), la suite exacte de Kummer $1 \rightarrow \mu_m \rightarrow \mathbb{G}_m \rightarrow \mathbb{G}_m \rightarrow 1$ fournit des isomorphismes:
\begin{gather*}
{_m}\text{Pic} \;\tilde{\mathcal{X}} \cong H^1(\tilde{\mathcal{X}}, \mu_m) \cong H^1(Y,\mu_m) \cong {_m} \text{Pic} \; Y,\\
\text{Pic} \; \tilde{\mathcal{X}} \cong H^2(\tilde{\mathcal{X}},\mu_m) \cong H^1(Y,\mu_m) \cong (\text{Pic}\; Y) / m.
\end{gather*}
Nous devons donc calculer $\text{Pic}\; Y$. Comme cela a été rappelé au début de l'article, on a un isomorphisme: 
$$\text{Pic}\; Y \cong (k^{\times})^{t} \oplus \mathbb{Z}^{|Y^{(0)}|} \oplus (\mathbb{R}/\mathbb{Z})^{2\cdot \sum_{v \in Y^{(0)}} g_v}.$$
En notant $m_y$ le nombre de points de la normalisation de $Y$ qui sont au-dessus de $y$ pour chaque $y \in Y(k)$, l'entier $t$ est égal à $\mu - c + 1$ où $c$ est le nombre de composantes irréductibles de $Y$ et $\mu =\sum_{y \in Y(k) } (m_y - 1)$. Dans notre situation, $m_y = 2$ si $y$ est intersection de deux composantes de $Y$ et $m_y = 1$ sinon. Par conséquent, en termes du graphe $\Gamma$, on a $\mu = |V_2|$, $c=|V_1|$ et $t=|V_2|-|V_1|+1$. Comme les sommets de $\Gamma$ qui sont dans $V_2$ sont tous de degré 2, on a $|E| = 2|V_2|$, et donc la formule d'Euler pour les graphes montre que $t = |V_2|-|V_1|+1=-|V| + |E| + 1 =  c_{\Gamma}$. On obtient ainsi que:
\begin{equation*}\label{Picbis}
\text{Pic}\; Y \cong (k^{\times})^{c_{\Gamma}} \oplus \mathbb{Z}^{|Y^{(0)}|} \oplus (\mathbb{R}/\mathbb{Z})^{2\cdot \sum_{v \in Y^{(0)}} g_v}. \tag{$\dagger \dagger$}
\end{equation*}
On déduit alors de (\ref{Pic}) et (\ref{Picbis}) que, pour tout $m>0$, les groupes ${_m}\text{Pic}\; X$ et $\text{Pic} \; X /m$ sont finis, et que:
\begin{align*}
\frac{|{_m}\text{Pic}\; X|}{|\text{Pic} \; X /m|} & = m^{|Y^{(0)}|}\cdot\frac{|{_m}\text{Pic}\; \tilde{\mathcal{X}}|}{|\text{Pic} \; \tilde{\mathcal{X}} /m|} \;\;\;\;\;\;\;\;\;\;\;\;\;\;\;\;\;\;\; \text{(d'après (\ref{Pic}))}\\
& = m^{|Y^{(0)}|}\cdot\frac{|{_m}\text{Pic}\; Y|}{|(\text{Pic} \; Y) /m|}\\
& = m^{|Y^{(0)}|}\cdot m^{c_{\Gamma}-|Y^{(0)}| + 2 \sum_{v \in Y^{(0)}} g_v} \;\;\;\;\; \text{(d'après (\ref{Picbis}))}\\
& = m^{n_X}.
\end{align*}
\end{itemize}
\end{proof}

\hspace{4ex} Par conséquent, on obtient pour chaque entier $r \in \mathbb{Z}$ un accouplement:
$$ AV: \text{Ext}^r_U(F,\mathbb{G}_m) \times H^{3-r}_c(U,F) \rightarrow \mathbb{Q}/\mathbb{Z}.$$

Dans le paragraphe suivant, nous allons démontrer le théorème suivant:

\begin{theorem}\label{AVdim2}
L'accouplement $AV$ est un accouplement parfait de groupes finis.
\end{theorem}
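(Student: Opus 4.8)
The plan is to run the proof of the classical Artin--Verdier theorem over a number field (Milne, \emph{Arithmetic Duality Theorems}, \S II.3), with Proposition~\ref{G_m} playing the role that global class field theory plays there. Write $\alpha^r(F)\colon \operatorname{Ext}^r_U(F,\mathbb{G}_m)\to H^{3-r}_c(U,F)^D$ for the map induced by $AV$. Since $F\mapsto\operatorname{Ext}^r_U(F,\mathbb{G}_m)$ and $F\mapsto H^{3-r}_c(U,F)$ are $\delta$-functors and the pairing is compatible with their connecting morphisms, any short exact sequence of constructible sheaves produces a morphism of long exact sequences built from the $\alpha^r$. First I would check that all the groups in sight are finite: by the dévissage below this reduces, via the Kummer sequence and absolute purity, to the finiteness assertions of Proposition~\ref{G_m}(iii)(iv) and to the finiteness of $H^*(k(v),\,\cdot\,)$ with finite coefficients (as $\operatorname{Gal}(k(v)^s/k(v))\cong\hat{\mathbb{Z}}$). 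Granting this, the five lemma reduces the theorem to showing $\alpha^r$ is an isomorphism for a generating family of sheaves. Using that a constructible sheaf on the Dedekind scheme $X$ is, up to iterated extensions, built from skyscrapers and from $j'_!\mathcal{F}$ with $\mathcal{F}$ locally constant on an open $V$, that such an $\mathcal{F}$ is resolved by induced sheaves $\pi_*(\mathbb{Z}/n)$ (for a trivialising finite étale $\pi\colon V'\to V$), and that one may truncate such resolutions since $X$ has torsion cohomological dimension $3$, it suffices to treat: \textbf{(a)} $F=i_{v*}M$, a skyscraper at a closed point $v\in X^{(1)}$ with $M$ finite; and \textbf{(b)} $F=j'_!\pi_*(\mathbb{Z}/n)$ with $j'\colon V\hookrightarrow X$ open and $\pi\colon V'\to V$ finite étale (including $\pi=\operatorname{id}$).

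For case \textbf{(a)}, the Gysin isomorphism $i_v^!\mathbb{G}_m\cong\mathbb{Z}[-1]$ (legitimate since $X$ is regular of dimension $1$ and $\operatorname{char}k=0$) gives $R\operatorname{Hom}_X(i_{v*}M,\mathbb{G}_m)\cong i_{v*}M^D[-2]$, hence $\operatorname{Ext}^r_X(i_{v*}M,\mathbb{G}_m)\cong H^{r-2}(k(v),M^D)$, while $H^{3-r}_c(U,F)=H^{3-r}(k(v),M)$. Thus $AV$ is identified with the pairing $H^{r-2}(k(v),M^D)\times H^{3-r}(k(v),M)\to\mathbb{Q}/\mathbb{Z}$, which is precisely the local Tate duality over $k(v)$ recalled in the preliminaries (I.1.10 of \cite{MilADT}). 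It remains to verify that the isomorphism $H^3_c(U,\mathbb{G}_m)\cong\mathbb{Q}/\mathbb{Z}$ of Proposition~\ref{G_m}(ii) restricts, on each local summand, to the local invariant map; this is a diagram chase with the localization exact sequences.

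For case \textbf{(b)}, extend $\pi$ to a finite morphism $\pi\colon X'\to X$, where $X'$ is the punctured spectrum of the normalization of $R_0$ in the function field of $V'$; by \S\ref{desingu}, $X'$ is again of the type covered by Proposition~\ref{G_m}. As $\pi$ is finite one has $j'_!\pi_*=\pi_*j''_!$ and $R\pi_*=\pi_*$, and as $\pi$ is étale over $V$ one has $\pi^!\mathbb{G}_m\cong\mathbb{G}_m$ there; combining the adjunctions for $j'_!$ and $\pi_*$ (and the compatibility of the invariant maps, so that the induced map $H^3_c(X',\mathbb{G}_m)\to H^3_c(X,\mathbb{G}_m)$ is an isomorphism), the pairing $AV$ for $F$ on $X$ is identified with the pairing $AV$ for the constant sheaf $\mathbb{Z}/n$ on $V''=\pi^{-1}(V)\subseteq X'$. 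By the localization exact sequences, removing the finitely many points of $X'\setminus V''$ one at a time and invoking case \textbf{(a)}, this reduces further to the constant sheaf $\mathbb{Z}/n$ on $X'$ itself. For the latter, the Kummer sequence and Proposition~\ref{G_m} express $H^r(X',\mu_n)$ in terms of ${}_n\!\operatorname{Pic}X'$, $\operatorname{Pic}X'/n$, ${}_n\!\operatorname{Br}X'$ and ${}_nH^3(X',\mathbb{G}_m)$ (all finite), while $H^r(X',\mathbb{Z}/n)\cong H^r(X',\mu_n)$ since $k$ is algebraically closed, and the pairing is the cup product into $H^3(X',\mu_n)\cong\mathbb{Z}/n$. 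In degrees $0$ and $3$ it is the tautological perfect pairing $\mu_n\times\mathbb{Z}/n\to\mu_n$. In degrees $1$ and $2$, Proposition~\ref{G_m}(iv) forces $|H^1(X',\mu_n)|=|H^2(X',\mathbb{Z}/n)|$, so it suffices to prove $\alpha^1$ (equivalently $\alpha^2$) injective; this I would extract from the compatibility of $AV$ with the Brauer sequence $(\star\star)$ and the divisor sequence for $X'$, together with the duality of the completions $K_v$ (itself a consequence of local Tate duality over $k(v)$) and the comparison with the regular model $\tilde{\mathcal{X}}$ and its special fibre $Y$ underlying Proposition~\ref{G_m}.

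The genuinely new difficulty is concentrated in this last step and is exactly the one isolated in Proposition~\ref{G_m}: because $\mathcal{X}$ is singular, $\operatorname{Br}X'\cong(\mathbb{Q}/\mathbb{Z})^{n_{X'}}$ is large and $\operatorname{Pic}X'$, $\operatorname{Br}X'$ are not computed by any naive Poincaré duality but only through the careful comparison with $\tilde{\mathcal{X}}$ and the curve $Y$ (done in \S\ref{desingu} and Proposition~\ref{G_m}); the middle-degree perfectness rests entirely on this. Everything else is the usual Artin--Verdier bookkeeping: making the dévissage terminate (resolution by induced sheaves together with the bound on the cohomological dimension) and checking that the various invariant and trace maps are compatibly normalized.
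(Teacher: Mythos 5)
Your overall architecture coincides with the paper's: dévissage to skyscraper sheaves (handled by the divisor/Gysin sequence and local duality over $k(v)$, as in Propositions \ref{2.1} and \ref{suppfer}), reduction of locally constant sheaves to constant sheaves on the normalization of $X$ in a finite extension via the norm map (Propositions \ref{dec} and \ref{norm}), shrinking and enlarging the open set (Proposition \ref{change ouvert}), finiteness and vanishing in high degrees, and, for the constant sheaf $\mathbb{Z}/n\mathbb{Z}$ on $X'$, the explicit pairing in degrees $(0,3)$ together with the cardinality identity $|{}_{n}\mathrm{Pic}\,X'|\,/\,|\mathrm{Pic}\,X'/n|=n^{n_{X'}}=|{}_{n}\mathrm{Br}\,X'|$ of Proposition \ref{G_m}(iv) in the middle degrees. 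Up to that point your sketch matches the paper step for step.

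The gap is at the decisive middle-degree step. Knowing that $\mathrm{Ext}^2_{X'}(\mathbb{Z}/n\mathbb{Z},\mathbb{G}_m)$ and $H^1(X',\mathbb{Z}/n\mathbb{Z})$ have the same finite cardinality does reduce perfectness to the injectivity (equivalently surjectivity) of one of the induced maps, as you say; but your plan to ``extract'' this injectivity from the compatibility of $AV$ with the Brauer sequence $(\star\star)$, the divisor sequence and local duality over the $K_v$ is a declaration, not an argument, and it is precisely the point the paper is built to avoid: the kernel you would have to control lives in the pairing between $\mathrm{Pic}\,X'/n\oplus{}_{n}\mathrm{Br}\,X'$ and ${}_{n}\mathrm{Pic}\,X'$, and its nondegeneracy would amount to an explicit duality between the $(\mathbb{Q}/\mathbb{Z})^{n_{X'}}$ of Proposition \ref{G_m}(iii) and the corresponding part of $\mathrm{Pic}\,X'$, i.e.\ to intersection-theoretic dualities on the special fibre $Y$ of the desingularization that Proposition \ref{G_m} deliberately does not prove. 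The paper never establishes injectivity directly: it proves \emph{surjectivity} of $\alpha^{r_0}(X,\mathcal{F})$ by a descending induction on $r_0$, starting at $r_0=3$ where the pairing $\tfrac{1}{m}\mathbb{Z}/\mathbb{Z}\times\mathbb{Z}/m\mathbb{Z}\to\mathbb{Q}/\mathbb{Z}$ is explicit; the induction step (Proposition \ref{rec}(i)) embeds $\mathcal{F}$ into a flasque torsion sheaf $\mathcal{I}$, writes $\mathcal{I}$ as a filtered colimit of constructible subsheaves so that any class of $H^{3-r_0}(X,\mathcal{F})$ dies in some constructible $\mathcal{F}_0\supseteq\mathcal{F}$, and deduces that $(\alpha^{r_0}(X,\mathcal{F}))^D$ is injective from the already-proved cases $r>r_0$. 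Surjectivity plus your cardinality count then gives bijectivity. Your proof needs this flasque-resolution induction (or an equivalent source of one-sided injectivity); without it, the counting supplied by Proposition \ref{G_m}(iv) says nothing about nondegeneracy.
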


Pour ce faire, on introduit le morphisme:
$$\alpha^r(U,F): \text{Ext}^r_U(F,\mathbb{G}_m) \rightarrow H^{3-r}_c(U,F)^D$$
induit par l'accouplement $AV$.

\subsection{Le cas d'un faisceau à support dans un fermé strict}\label{AVdébut}

Nous nous intéressons au cas où le support de $F$ est contenu dans un fermé strict de $U$.

\begin{lemma} \label{2.1}
Supposons que $F$ est à support dans un fermé strict $Z$ de $U$. Alors, pour tout $r\geq 0$, 
$$\bigoplus_{v \in Z} \text{Ext}^{r-1}_v(i_v^*F,\mathbb{Z}) \cong \text{Ext}^r_U(F,\mathbb{G}_m),$$
où, pour $v \in Z$, $i_v$ désigne l'immersion fermée $\text{Spec} \; k(v) \hookrightarrow U$.
\end{lemma}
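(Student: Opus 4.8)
The plan is to reduce the computation of $\text{Ext}^r_U(F,\mathbb{G}_m)$ for a sheaf supported on a closed subset $Z$ to a computation over the residual points $v \in Z$, using the adjunction between $i_v^*$ and $(i_v)_*$ together with a local analysis of $\mathbb{G}_m$ near the points of $Z$. Since $F$ is supported on $Z$, it decomposes (up to the relevant derived-category manipulations) as a direct sum over $v \in Z$ of pushforwards $(i_v)_* i_v^* F$; thus it suffices to treat a single $v$ and a sheaf of the form $(i_v)_* \mathcal{F}$ with $\mathcal{F}$ a Galois module over $k(v)$. The key input will be the identification, in the derived category, of $R\,\underline{\text{Hom}}_U((i_v)_*\mathcal{F}, \mathbb{G}_m)$ with $(i_v)_* R\,\underline{\text{Hom}}_v(\mathcal{F}, R i_v^! \mathbb{G}_m)$, so that everything comes down to computing $R i_v^! \mathbb{G}_m$ on $U$.

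First I would invoke the closed-open decomposition: let $i = i_v$ and let $j$ be the complementary open immersion, giving the triangle relating $j_! j^* \mathbb{G}_m$, $\mathbb{G}_m$, and $i_* i^* \mathbb{G}_m$, from which one extracts $R i^! \mathbb{G}_m$. Here $U$ is a Dedekind scheme of (absolute) dimension one over the base, and $v$ is a codimension-one point with residue field $k(v)$; the standard purity / Gysin computation on a regular one-dimensional scheme gives $R i^! \mathbb{G}_m \cong \mathbb{Z}[-1]$, i.e. the only nonvanishing cohomology sheaf is $R^1 i^! \mathbb{G}_m = \mathbb{Z}$, concentrated in degree $1$ (this is the usual statement that $i^!\mathbb{G}_m$ computes the valuation/residue at $v$). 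Substituting into the adjunction $\text{Ext}^r_U(i_*\mathcal{F}, \mathbb{G}_m) \cong \text{Ext}^r_v(\mathcal{F}, R i^! \mathbb{G}_m) \cong \text{Ext}^{r-1}_v(\mathcal{F}, \mathbb{Z})$ then yields exactly the claimed formula after summing over $v \in Z$. Note that since $\mathbb{G}_m$ lives in degree $0$ and the $\text{Ext}$ groups are defined to vanish in negative degrees by the paper's convention, the case $r = 0$ gives both sides zero, consistent with the statement.

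The main obstacle I anticipate is making the adjunction $R\text{Hom}_U(i_* \mathcal{F}, G) \cong R\text{Hom}_v(\mathcal{F}, R i^! G)$ fully rigorous in this singular/non-affine setting — one must check that $i_v$ is a closed immersion into the \emph{regular} Dedekind scheme $U$ (not into the singular $\mathcal{X}$), so that purity applies and $R i_v^! \mathbb{G}_m$ is as clean as asserted — and in verifying that the direct sum decomposition of $F$ over the finitely many points of $Z$ is compatible with the $\text{Ext}$ computation (this uses that $Z$ is a finite set of closed points of $U$, or at least that the supports are disjoint, so that the contributions do not interfere). A secondary technical point is that $\mathcal{F} = i_v^* F$ is a constructible, hence finite, Galois module over $k(v)$, so the local $\text{Ext}$ groups $\text{Ext}^{r-1}_v(i_v^*F, \mathbb{Z})$ are the expected Galois-cohomology groups and are finite; this is needed to ensure the isomorphism has the right target. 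Once purity and the adjunction are in place, the remainder is formal.
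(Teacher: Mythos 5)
Your argument is correct; it reaches the isomorphism by a formally different route from the paper's. You decompose $F\cong\bigoplus_{v\in Z}i_{v*}i_v^*F$ (legitimate: $Z$ is a finite set of closed points of the integral one-dimensional scheme $U$, and $j^*F=0$ on the open complement), then combine the derived adjunction $R\text{Hom}_U(i_{v*}\mathcal{F},\mathbb{G}_m)\cong R\text{Hom}_v(\mathcal{F},Ri_v^!\mathbb{G}_m)$ with the purity computation $Ri_v^!\mathbb{G}_m\cong\mathbb{Z}[-1]$. The paper instead applies $\text{Ext}^\bullet_U(F,-)$ to the divisor sequence $0\to\mathbb{G}_m\to g_*\mathbb{G}_m\to\text{Div}_U\to 0$, where $g$ is the generic point and $\text{Div}_U=\bigoplus_{v\in U^{(1)}}i_{v*}\mathbb{Z}$: the terms $\text{Ext}^r_U(F,g_*\mathbb{G}_m)$ vanish because $R^sg_*\mathbb{G}_m=0$ for $s>0$ and $g^*F=0$, while $\text{Ext}^r_U(F,i_{v*}\mathbb{Z})\cong\text{Ext}^r_v(i_v^*F,\mathbb{Z})$ by the (underived, exact) adjunction for the finite morphism $i_v$, so the connecting maps of the long exact sequence produce the shift $r\mapsto r-1$. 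The two proofs rest on identical inputs: your purity isomorphism is exactly the statement that $\text{cone}(\mathbb{G}_m\to Rj_*j^*\mathbb{G}_m)\cong i_*\mathbb{Z}$, which, like $R^sg_*\mathbb{G}_m=0$, comes down to Hilbert 90 plus $\text{cd}(K_v^{nr})\le 1$; and the point you rightly flag as the main obstacle — that $v$ must be a closed point of the \emph{regular} one-dimensional scheme $U\subseteq X=\mathcal{X}\setminus\{\mathfrak{m}\}$ rather than of the singular $\mathcal{X}$ — is indeed satisfied and is precisely what the paper uses implicitly when computing the stalks of $R^sg_*\mathbb{G}_m$. What the paper's packaging buys is that no $Ri^!$ or derived adjunction is needed, only ordinary long exact sequences and the exactness of $i_{v*}$; what yours buys is that the local contribution of each $v$ is isolated from the start, making the degree shift transparent, and your observation that $r=0$ gives $0$ on both sides matches $R^0i_v^!\mathbb{G}_m=0$.
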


\begin{proof}
Nous disposons d'une suite exacte de faisceaux sur $U$:
$$ 0 \rightarrow \mathbb{G}_m \rightarrow g_*\mathbb{G}_m \rightarrow \text{Div}_U \rightarrow 0$$
avec $\text{Div}_U = \bigoplus_{v \in U^{(1)}}  {i_v}_*\mathbb{Z}$, où $U^{(1)}$. On obtient alors une suite exacte longue:
$$ ... \rightarrow \text{Ext}^r_U(F,\mathbb{G}_m) \rightarrow \text{Ext}^r_U(F,g_*\mathbb{G}_m) \rightarrow \text{Ext}^r_U(F,\text{Div}_U) \rightarrow ... $$
\begin{itemize}
\item[$\bullet$] D'une part, comme dans la preuve de la proposition \ref{G_m}, pour $s >0$ le faisceau $R^sg_*\mathbb{G}_m$ est nul. Cela implique que la suite spectrale $\text{Ext}^r_U(F,R^sg_*\mathbb{G}_m) \Rightarrow \text{Ext}^{r+s}_{K}(g^*F,\mathbb{G}_m)$ induit des isomorphismes $\text{Ext}^r_U(F,g_*\mathbb{G}_m) \cong \text{Ext}^{r}_{K}(g^*F,\mathbb{G}_m)$. Or $g^*F=0$. Donc $\text{Ext}^r_U(F,g_*\mathbb{G}_m)=0$.
\item[$\bullet$] D'autre part: $$\text{Ext}^r_U(F,\text{Div}_U) = \bigoplus_{v \in U^{(1)}} \text{Ext}^r_U(F,{i_v}_*\mathbb{Z}) = \bigoplus_{v \in U^{(1)}} \text{Ext}^r_v({i_v}^*F,\mathbb{Z}) = \bigoplus_{v \in Z} \text{Ext}^r_v({i_v}^*F,\mathbb{Z}),$$
car $i_v^*F=0$ pour $v \in Z$.
\end{itemize}
Par conséquent, on obtient des isomorphismes $\bigoplus_{v \in Z} \text{Ext}^{r-1}_v(i_v^*F,\mathbb{Z}) \cong \text{Ext}^r_U(F,\mathbb{G}_m) $ pour $r \geq 0$.
\end{proof}

\begin{proposition}\label{suppfer}
Supposons que $F$ est à support dans un fermé strict $Z$ de $U$. Alors $\alpha^r(U,F)$ est un isomorphisme de groupes finis pour tout entier $r$.
\end{proposition}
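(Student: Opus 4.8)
The plan is to follow Milne's treatment of the analogous statement for number fields (\cite{MilADT}, II.3): one reduces the assertion to a local duality at each point of $Z$ and then appeals to the known duality for the cohomology of $\hat{\mathbb{Z}}$.

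First I would reduce to the local situation. Since $U$ is integral of dimension $1$, the strict closed subset $Z$ is a finite set of closed points, and for $v \in Z$ the residue field $k(v)$ is the fraction field of a henselian discrete valuation ring with residue field $k$; hence $\text{Gal}(k(v)) \cong \hat{\mathbb{Z}}$ and $k(v)$ has cohomological dimension $1$. Write $M_v := i_v^* F$ (a finite Galois module over $k(v)$, since $F$ is constructible). On the $\text{Ext}$ side, Lemme \ref{2.1} already gives $\text{Ext}^r_U(F,\mathbb{G}_m) \cong \bigoplus_{v \in Z} \text{Ext}^{r-1}_v(M_v,\mathbb{Z})$. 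On the cohomology side, $F$ being supported on the set $Z$ of closed points — which is also closed in $X$ — one has $j_! F \cong \bigoplus_{v \in Z} (i_v)_* M_v$ as sheaves on $X$ (where now $i_v$ denotes the closed immersion $\text{Spec}\, k(v) \hookrightarrow X$); since $i_v$ is a closed immersion the Leray spectral sequence degenerates and yields $H^{3-r}_c(U,F) = H^{3-r}(X, j_! F) \cong \bigoplus_{v \in Z} H^{3-r}(k(v), M_v)$. All the groups involved are finite, so it suffices to show that for each $v \in Z$ the $v$-th component $\text{Ext}^{r-1}_v(M_v,\mathbb{Z}) \to H^{3-r}(k(v),M_v)^D$ of $\alpha^r(U,F)$ is an isomorphism.

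Next I would rewrite the left-hand side cohomologically. As $M_v$ is killed by some integer $n$ whereas multiplication by $n$ is invertible on $\mathbb{Q}$, one has $\text{Ext}^i_v(M_v,\mathbb{Q}) = 0$ for all $i$, so the triangle $\mathbb{Z} \to \mathbb{Q} \to \mathbb{Q}/\mathbb{Z}$ gives $\text{Ext}^{r-1}_v(M_v,\mathbb{Z}) \cong \text{Ext}^{r-2}_v(M_v,\mathbb{Q}/\mathbb{Z})$. Because $\mathbb{Q}/\mathbb{Z}$ is an injective abelian group the sheaves $\underline{\text{Ext}}^q_{k(v)}(M_v,\mathbb{Q}/\mathbb{Z})$ vanish for $q > 0$, so the local-to-global spectral sequence for $\text{Ext}$ collapses to $\text{Ext}^{r-2}_v(M_v,\mathbb{Q}/\mathbb{Z}) \cong H^{r-2}(k(v),M_v^\vee)$, where $M_v^\vee := \underline{\text{Hom}}(M_v,\mathbb{Q}/\mathbb{Z})$. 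Then I would invoke the standard duality for the cohomology of $\hat{\mathbb{Z}} \cong \text{Gal}(k(v))$ with dualizing module $\mathbb{Q}/\mathbb{Z}$ (a reformulation, after the identification $\mu_n \cong \mathbb{Z}/n\mathbb{Z}$ over $k(v)$, of the perfect pairing recalled in the introduction; see Exemple I.1.10 de \cite{MilADT}): cup product induces perfect pairings of finite groups
$$H^i(k(v),M_v^\vee) \times H^{1-i}(k(v),M_v) \rightarrow H^1(k(v),\mathbb{Q}/\mathbb{Z}) \cong \mathbb{Q}/\mathbb{Z},$$
both sides vanishing unless $i \in \{0,1\}$ since $k(v)$ has cohomological dimension $1$. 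Taking $i = r-2$, so that $1 - i = 3-r$, the bidegrees match those of the two sides of $\alpha^r(U,F)$.

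The hard part will be to verify that the pairing $AV$, transported through the isomorphisms of Lemme \ref{2.1} and of the previous paragraph, coincides up to sign with this cup-product pairing — exactly the delicate step in \cite{MilADT}, II.3. Concretely one must unwind the Yoneda $\text{Ext}$-pairing together with the connecting homomorphism attached to $0 \to \mathbb{G}_m \to g_* \mathbb{G}_m \to \text{Div}_U \to 0$ (which accounts for the degree shift in Lemme \ref{2.1}) and with the shift coming from $\mathbb{Z} \to \mathbb{Q} \to \mathbb{Q}/\mathbb{Z}$, using the standard compatibility of cup products with these boundary maps; and one must check that the induced map $H^1(k(v),\mathbb{Q}/\mathbb{Z}) \cong \mathbb{Q}/\mathbb{Z} \to H^3_c(U,\mathbb{G}_m) \cong \mathbb{Q}/\mathbb{Z}$ is $\pm \, \text{id}$, which follows from the description of $H^3_c(U,\mathbb{G}_m)$ in the proof of Proposition \ref{G_m}(ii) and from the fact that the residue maps $H^2(K_v,\mu_n) \to H^1(k(v),\mathbb{Z}/n\mathbb{Z})$ are isomorphisms. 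Granting this compatibility, each $v$-th component of $\alpha^r(U,F)$ is the isomorphism furnished by local duality, so $\alpha^r(U,F)$ is an isomorphism of finite groups for every $r \in \mathbb{Z}$.
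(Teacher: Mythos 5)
Your proposal is correct and follows essentially the same route as the paper: reduce via Lemme \ref{2.1} and the closed support to a direct sum over $v \in Z$, shift $\mathrm{Ext}^{r-1}(\cdot,\mathbb{Z})$ to $H^{r-2}(k(v),\mathrm{Hom}(F_{\overline v},\mathbb{Q}/\mathbb{Z}))$ via $\mathbb{Z}\to\mathbb{Q}\to\mathbb{Q}/\mathbb{Z}$, identify $H^{3-r}_c(U,F)$ with $\bigoplus_v H^{3-r}(k(v),F_{\overline v})$, and conclude by the duality for $\mathrm{Gal}(k(v))\cong\hat{\mathbb{Z}}$. The paper simply asserts the compatibility of the transported pairing with the natural local one, whereas you flag it as a step to verify; this is a matter of level of detail, not of method.
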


\begin{proof}
Reprenons les notations de la démonstration précédente. Nous avons des isomorphismes:
\begin{align*}
\text{Ext}^r_U(F,\mathbb{G}_m) & \cong \bigoplus_{v \in Z} \text{Ext}^{r-1}_v(i_v^*F,\mathbb{Z}) \cong \bigoplus_{v \in Z} \text{Ext}^{r-1}_{k(v)}(F_{\overline{v}},\mathbb{Z}) \\ & \cong \bigoplus_{v \in Z} \text{Ext}^{r-2}_{k(v)}(F_{\overline{v}},\mathbb{Q}/\mathbb{Z}) \cong \bigoplus_{v \in Z} H^{r-2}(k(v),\text{Hom}(F_{\overline{v}},\mathbb{Q}/\mathbb{Z})), 
\end{align*}
$$ H^r_c(U,F) \cong H^r(Z,i^*F) \cong \bigoplus_{v \in Z} H^r(v,i_v^*F) \cong \bigoplus_{v \in Z} H^r(k(v),F_{\overline{v}}).$$
À travers ces isomorphismes, l'accouplement:
$$ \text{Ext}^r_U(F, \mathbb{G}_m) \times H_c^{3-r}(U,F) \rightarrow H_c^{3}(U,\mathbb{G}_m) \cong \mathbb{Q}/\mathbb{Z}$$
s'identifie à l'accouplement naturel:
$$ \bigoplus_{v \in Z} H^{r-2}(k(v),\text{Hom}(F_{\overline{v}},\mathbb{Q}/\mathbb{Z})) \times \bigoplus_{v \in Z} H^{3-r}(k(v),F_{\overline{v}})\rightarrow \mathbb{Q}/\mathbb{Z},$$
qui est bien un accouplement parfait de groupes finis.
\end{proof}

\subsection{Changement d'ouvert}

Nous allons maintenant voir que, si $V$ désigne un ouvert de $U$, alors $\alpha^r(U,F)$ est un isomorphisme pour tout entier $r$ si, et seulement si, $\alpha^r(V,F|_V)$ l'est. 

\begin{lemma}
Soit $0 \rightarrow F' \rightarrow F \rightarrow F'' \rightarrow 0$ une suite exacte de faisceaux constructibles sur $U$. Si $\alpha^r(U,F')$ et $\alpha^r(U,F'')$ sont des isomorphismes pour tout entier $r$, alors $\alpha^r(U,F)$ est un isomorphisme pour tout entier $r$.
\end{lemma}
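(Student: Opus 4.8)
The plan is a standard \emph{five lemma} argument resting on two long exact sequences together with the bifunctoriality of the pairing $AV$.

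First I would apply the contravariant cohomological $\delta$-functor $\text{Ext}^\bullet_U(-,\mathbb{G}_m)$ to the short exact sequence $0 \to F' \to F \to F'' \to 0$, obtaining the long exact sequence
$$\cdots \to \text{Ext}^r_U(F'',\mathbb{G}_m) \to \text{Ext}^r_U(F,\mathbb{G}_m) \to \text{Ext}^r_U(F',\mathbb{G}_m) \xrightarrow{\ \partial\ } \text{Ext}^{r+1}_U(F'',\mathbb{G}_m) \to \cdots .$$
Since the functor $j_!$ is exact, the sequence $0 \to j_!F' \to j_!F \to j_!F'' \to 0$ is exact on $X$, and its long exact cohomology sequence, rewritten with $H^\bullet_c(U,-) = H^\bullet(X,j_!(-))$ and the substitution $n = 3-r$, reads
$$\cdots \to H^{2-r}_c(U,F'') \xrightarrow{\ \delta\ } H^{3-r}_c(U,F') \to H^{3-r}_c(U,F) \to H^{3-r}_c(U,F'') \to H^{4-r}_c(U,F') \to \cdots .$$
As $\mathbb{Q}/\mathbb{Z}$ is an injective abelian group, $(-)^D$ is exact, so dualizing this sequence keeps it exact.

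Next I would assemble these into a commutative ladder whose vertical maps are the $\alpha^\bullet(U,-)$:
\[
\xymatrix@C=0.6em{
\text{Ext}^{r-1}_U(F',\mathbb{G}_m) \ar[r]\ar[d] & \text{Ext}^r_U(F'',\mathbb{G}_m) \ar[r]\ar[d] & \text{Ext}^r_U(F,\mathbb{G}_m) \ar[r]\ar[d] & \text{Ext}^r_U(F',\mathbb{G}_m) \ar[r]\ar[d] & \text{Ext}^{r+1}_U(F'',\mathbb{G}_m) \ar[d]\\
H^{4-r}_c(U,F')^D \ar[r] & H^{3-r}_c(U,F'')^D \ar[r] & H^{3-r}_c(U,F)^D \ar[r] & H^{3-r}_c(U,F')^D \ar[r] & H^{2-r}_c(U,F'')^D
}
\]
with vertical arrows $\alpha^{r-1}(U,F')$, $\alpha^r(U,F'')$, $\alpha^r(U,F)$, $\alpha^r(U,F')$, $\alpha^{r+1}(U,F'')$. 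The squares not involving a connecting map commute simply because $AV$ is functorial in each of its two arguments. The remaining two squares commute up to sign, which amounts to the identity $\langle \partial x, y\rangle = \pm\,\langle x, \delta y\rangle$ for $AV$; this compatibility of the pairing with the boundary maps of the two long exact sequences is the one point requiring real work, and I would verify it by unwinding the Yoneda description $\text{Ext}^\bullet_U(F,\mathbb{G}_m) = \text{Hom}_{D(U)}(F,\mathbb{G}_m[\bullet])$ exactly as in the classical Artin–Verdier argument of \cite{MilADT}, II.3.

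Finally, by hypothesis $\alpha^{r-1}(U,F')$, $\alpha^r(U,F'')$, $\alpha^r(U,F')$ and $\alpha^{r+1}(U,F'')$ are isomorphisms of finite groups, so the five lemma gives that $\alpha^r(U,F)$ is an isomorphism for every $r$; the finiteness of $\text{Ext}^r_U(F,\mathbb{G}_m)$ and $H^{3-r}_c(U,F)$ is read off from the same two long exact sequences, each such group being wedged between the finite groups attached to $F'$ and $F''$. Thus the only non-formal ingredient is the sign-compatibility of $AV$ with connecting homomorphisms; everything else is routine.
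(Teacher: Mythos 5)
Your proof is correct and is essentially identical to the paper's: the same five-term ladder built from the $\mathrm{Ext}$ long exact sequence and the dual of the compactly supported cohomology sequence, concluded by the five lemma. You are in fact more careful than the paper, which asserts the commutativity of the diagram without comment, whereas you rightly isolate the sign-compatibility of $AV$ with the connecting homomorphisms as the one point needing verification.
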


\begin{proof}
On a un diagramme commutatif à lignes exactes:\\
\scriptsize{
\xymatrix{
 \text{Ext}^{r-1}_U(F',\mathbb{G}_m) \ar[r] \ar[d]^{\cong} & \text{Ext}^r_U(F'',\mathbb{G}_m)\ar[r] \ar[d]^{\cong} & \text{Ext}^r_U(F,\mathbb{G}_m)\ar[r] \ar[d] & \text{Ext}^r_U(F',\mathbb{G}_m) \ar[r] \ar[d]^{\cong} & \text{Ext}^{r+1}_U(F'',\mathbb{G}_m) \ar[d]^{\cong} \\
 (H^{4-r}_c(U,F'))^D \ar[r] & (H^{3-r}_c(U,F''))^D \ar[r] & (H^{3-r}_c(U,F))^D \ar[r] & (H^{3-r}_c(U,F'))^D \ar[r] & (H^{2-r}_c(U,F''))^D  
}}
\\
\normalsize
où les flèches verticales sont données par les $\alpha^r$. Le lemme des cinq permet de conclure.
\end{proof}

\begin{proposition}\label{change ouvert}
Soit $V$ un ouvert non vide de $U$. Alors $\alpha^r(U,F)$ est un isomorphisme pour tout entier $r$ si, et seulement si, $\alpha^r(V,F|_V)$ l'est.
\end{proposition}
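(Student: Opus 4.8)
The plan is to carry out the usual ``shrinking the open set'' step of Artin--Verdier-type proofs (one may compare with the proof of Theorem II.3.1 of \cite{MilADT}): first I will reduce, by excision together with the already-treated case of a sheaf supported in a strict closed subset, to the case of a sheaf of the form $j'_!(F|_V)$; then I will identify the whole situation relative to $j'_!(F|_V)$ on $U$ with the situation relative to $F|_V$ on $V$.

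Write $j': V \hookrightarrow U$ for the open immersion and $i: Z \hookrightarrow U$ for the complementary closed immersion, where $Z = U \setminus V$ carries its reduced structure. As $X$ is an integral Dedekind scheme and $V$ is nonempty, $Z$ is a finite set of closed points of $X$. One has the excision exact sequence of constructible sheaves on $U$:
$$0 \rightarrow j'_!(F|_V) \rightarrow F \rightarrow i_* i^* F \rightarrow 0.$$
The sheaf $i_* i^* F$ is supported in the strict closed subset $Z$ of $U$, so $\alpha^r(U, i_* i^* F)$ is an isomorphism of finite groups for every $r$, by Proposition \ref{suppfer}. Feeding the associated long exact sequences, of $\text{Ext}^{\bullet}_U(-, \mathbb{G}_m)$ and of $H^{\bullet}_c(U, -)^D$, into the five lemma, exactly as in the previous lemma, one gets that $\alpha^r(U, F)$ is an isomorphism for all $r$ if and only if $\alpha^r\big(U, j'_!(F|_V)\big)$ is. So it remains to identify, compatibly with the pairings, the morphism $\alpha^r(U, j'_! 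G)$ with $\alpha^r(V, G)$, where $G := F|_V$.

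For the first factor, the adjunction $j'_! \dashv j'^*$, the fact that $j'^*$ preserves injective sheaves (having the exact left adjoint $j'_!$), and the equality $j'^* \mathbb{G}_m = \mathbb{G}_m$ provide a natural isomorphism $\text{Ext}^r_U(j'_! G, \mathbb{G}_m) \cong \text{Ext}^r_V(G, \mathbb{G}_m)$, sending a morphism $\phi: j'_! G \to \mathbb{G}_m[r]$ of $D(U)$ to the morphism $j'^*\phi: G \to \mathbb{G}_m[r]$ of $D(V)$ (using here $j'^* j'_! \cong \mathrm{id}$). For the second factor, since $j_! \circ j'_!$ is the extension by zero from $V$ to $X$, one has tautologically $H^{3-r}_c(U, j'_! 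G) = H^{3-r}(X, j_! j'_! G) = H^{3-r}_c(V, G)$. Finally, the adjunction morphism $\epsilon: j'_! j'^* \mathbb{G}_m \to \mathbb{G}_m$ on $U$ induces $j_! j'_! \mathbb{G}_m \to j_! \mathbb{G}_m$, hence a natural morphism $\theta: H^3_c(V, \mathbb{G}_m) \to H^3_c(U, \mathbb{G}_m)$. Applying $H^{\bullet}_c(U, -)$ to the excision sequence for $\mathbb{G}_m$ and using that each $k(v)$, for $v \in Z$, has cohomological dimension at most $1$, so that $\bigoplus_{v \in Z} H^s(k(v), \mathbb{G}_m) = 0$ for $s \geq 2$, one sees that $\theta$ is an isomorphism (both groups being isomorphic to $\mathbb{Q}/\mathbb{Z}$ by Proposition \ref{G_m}(ii)).

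It then remains to check that $\theta$ transforms the pairing $AV$ relative to $G$ on $V$ into the pairing $AV$ relative to $j'_! G$ on $U$, that is, that $\theta\big(AV(j'^*\phi, \gamma)\big) = AV(\phi, \gamma)$ for every $\phi$ and every $\gamma \in H^{3-r}(X, j_! j'_! G)$. Since $AV$ is obtained by applying $j_!$ (resp. $j_! j'_!$) to the corresponding morphism of the derived category, then passing to hypercohomology on $X$, this reduces to the identity $\epsilon_{\mathbb{G}_m[r]} \circ j'_!(j'^*\phi) = \phi$ in $D(U)$, which is nothing but the naturality of $\epsilon$ evaluated at $\phi$, together with the triangle identities (the unit $\mathrm{id} \to j'^* j'_!$ of an open immersion being an isomorphism, $\epsilon_{j'_! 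G}$ is identified with the identity of $j'_! G$). Hence $\alpha^r(U, j'_! G)$ and $\alpha^r(V, G)$ correspond to each other, and the proposition follows. The only point requiring genuine care is this last compatibility check between the cup-product pairings and the adjunction isomorphisms; the rest --- including the fact that $\theta$ is an isomorphism, as well as finiteness, inherited from the fact that the groups attached to $j'_! G$ on $U$ are literally those attached to $G$ on $V$ --- is formal.
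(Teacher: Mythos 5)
Your argument is correct and is essentially the proof given in the paper: the same excision sequence $0 \rightarrow j'_!(F|_V) \rightarrow F \rightarrow i_*i^*F \rightarrow 0$, the same reduction via Proposition \ref{suppfer} and the five lemma, and the same identification of $\text{Ext}^r_U(j'_!(F|_V),\mathbb{G}_m)$ with $\text{Ext}^r_V(F|_V,\mathbb{G}_m)$ and of $H^r_c(U,j'_!(F|_V))$ with $H^r_c(V,F|_V)$. The only difference is that you spell out the compatibility of the two pairings and check that $H^3_c(V,\mathbb{G}_m)\rightarrow H^3_c(U,\mathbb{G}_m)$ is an isomorphism, points that the paper leaves implicit.
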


\begin{proof}
Nous disposons d'une immersion ouverte $j: V \hookrightarrow U$ et d'une immersion fermée $i: U\setminus V \hookrightarrow U$, et donc d'une suite exacte de faisceaux sur $U$:
$$0 \rightarrow j_!j^*F \rightarrow F \rightarrow i_*i^*F \rightarrow 0.$$
D'après la proposition \ref{suppfer}, $\alpha^r(U,i_*i^*F)$ est un isomorphisme pour tout $r$, et donc, d'après le lemme précédent, $\alpha^r(U,F)$ est un isomorphisme pour tout $r$ si, et seulement si, $\alpha^r(U,j_!j^*F)$ l'est.\\
Or $\text{Ext}^r_U(j_!j^*F, \mathbb{G}_m) \cong \text{Ext}^r_V(j^*F,\mathbb{G}_m)$ et $H^r_c(U,j_!j^*F) \cong H^r_c(V,j^*F)$. À travers ces isomorphismes, $\alpha^r(U,j_!j^*F)$ s'identifie à $\alpha^r(V,j^*F)$. On en déduit immédiatement le résultat.
\end{proof}

\begin{remarque}\label{suppferrq}
La proposition \ref{suppfer} et la démonstration précédente montrent aussi que:
\begin{itemize}
\item[$\bullet$] $\text{Ext}_U^r(F,\mathbb{G}_m)$ est fini si, et seulement si, $\text{Ext}_V^r(F,\mathbb{G}_m)$ l'est;
\item[$\bullet$] pour $r\geq 4$, le groupe $\text{Ext}_U^r(F,\mathbb{G}_m)$ est nul si, et seulement si, $\text{Ext}_V^r(F,\mathbb{G}_m)$ l'est;
\item[$\bullet$] $H^r_c(U,F)$ est fini si, et seulement si, $H^r_c(V,F)$ l'est.
\end{itemize}
Par ailleurs, la suite exacte:
$$ \bigoplus_{v \in X \setminus U} H^{r-1}(K_v,F) \rightarrow H^r_c(U,F) \rightarrow H^r(U,F) \rightarrow \bigoplus_{v \in X \setminus U} H^r(K_v,F) $$
montre que $H^r(U,F)$ est fini si, et seulement si, $H^r_c(U,F)$ l'est.
\end{remarque}

\subsection{Propriétés de finitude}

Nous allons maintenant établir que $H^r_c(U,F)$ et $\text{Ext}^r_U(F,\mathbb{G}_m)$ sont bien finis.

\begin{lemma}\label{finiZ/m}
Soit $m \in \mathbb{N}^*$. Pour tout $r \in \mathbb{N}$, les groupes $H^r(U,\mathbb{Z}/m\mathbb{Z})$ et $\text{Ext}^r_U(\mathbb{Z}/m\mathbb{Z},\mathbb{G}_m)$ sont finis.
\end{lemma}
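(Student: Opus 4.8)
The plan is to reduce the statement to cohomology over the much simpler regular model $\tilde{\mathcal{X}}$ and its special fiber $Y$, where finiteness is classical. First I would treat $H^r(U,\mathbb{Z}/m\mathbb{Z})$. Since we already know (Proposition \ref{G_m}(iv)) that ${_m}\text{Pic}\,X$ and $\text{Pic}\,X/m$ are finite, the Kummer sequence $1 \to \mu_m \to \mathbb{G}_m \to \mathbb{G}_m \to 1$ on $X$, together with $\mathcal{O}_X(X)^\times = \mathcal{O}_K^\times$ divisible and $\text{Br}\,X \cong (\mathbb{Q}/\mathbb{Z})^{n_X}$ (Proposition \ref{G_m}(iii)), already pins down $H^r(X,\mu_m)$ for all $r$: one gets $H^1(X,\mu_m) \cong {_m}\text{Pic}\,X$ finite, $H^2(X,\mu_m)$ sitting in an extension of ${_m}\text{Br}\,X \cong (\mathbb{Z}/m)^{n_X}$ by $\text{Pic}\,X/m$ hence finite, $H^3(X,\mu_m) \cong \text{Br}(X)/m = 0$, and $H^r(X,\mu_m) = 0$ for $r > 3$ by $H^r(X,\mathbb{G}_m) = 0$ for $r>3$ (Proposition \ref{G_m}(i), noting $X$ has no closed point to worry about beyond what is covered there). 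Since $k$ is algebraically closed of characteristic $0$, it contains $\mu_m$, so $\mathbb{Z}/m\mathbb{Z} \cong \mu_m$ as sheaves on $X$ (non-canonically), giving finiteness of $H^r(X,\mathbb{Z}/m\mathbb{Z})$ for all $r$. For a general open $U \subseteq X$, I would invoke the last part of Remark \ref{suppferrq}: the localization sequence relating $H^r_c(U,-)$, $H^r(U,-)$ and $\bigoplus_{v \in X\setminus U} H^r(K_v,-)$ shows $H^r(U,F)$ is finite iff $H^r_c(U,F)$ is, and combining with the fact (Proposition \ref{suppfer} and the proof of Proposition \ref{change ouvert}) that $H^r_c(U,F)$ finiteness is insensitive to shrinking $U$ — here applied to $F = \mathbb{Z}/m\mathbb{Z}$, comparing $U$ with $X$ itself via $j_!j^*(\mathbb{Z}/m\mathbb{Z})$ on $X$ and the finite-support quotient $i_*i^*(\mathbb{Z}/m\mathbb{Z})$, whose cohomology is finite because each $k(v)$ has cohomological dimension $1$ with finite $H^i(k(v),\mathbb{Z}/m\mathbb{Z})$ — we conclude $H^r(U,\mathbb{Z}/m\mathbb{Z})$ is finite for all $U$ and all $r$.

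Next, for $\text{Ext}^r_U(\mathbb{Z}/m\mathbb{Z},\mathbb{G}_m)$: I would use the local-to-global spectral sequence $H^p(U,\underline{\text{Ext}}^q_U(\mathbb{Z}/m\mathbb{Z},\mathbb{G}_m)) \Rightarrow \text{Ext}^{p+q}_U(\mathbb{Z}/m\mathbb{Z},\mathbb{G}_m)$. The sheaf $\underline{\text{Hom}}_U(\mathbb{Z}/m\mathbb{Z},\mathbb{G}_m) = \mu_m$, while $\underline{\text{Ext}}^1_U(\mathbb{Z}/m\mathbb{Z},\mathbb{G}_m)$ is computed from the free resolution $0 \to \mathbb{Z} \xrightarrow{m} \mathbb{Z} \to \mathbb{Z}/m\mathbb{Z} \to 0$, giving $\underline{\text{Ext}}^1_U(\mathbb{Z}/m\mathbb{Z},\mathbb{G}_m) = \mathbb{G}_m/m$, and $\underline{\text{Ext}}^q = 0$ for $q \geq 2$ (since $\underline{\text{Ext}}^q_U(\mathbb{Z},\mathbb{G}_m) = 0$ for $q \geq 1$, $\mathbb{Z}$ being free). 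Now $\mathbb{G}_m/m \cong \mu_m$ via the Kummer sequence, up to a possible contribution; more precisely, from $1 \to \mu_m \to \mathbb{G}_m \xrightarrow{m} \mathbb{G}_m \to 1$ one reads $\mathbb{G}_m/m = 0$ as an étale sheaf (the multiplication map is surjective on the small étale site because $\mu_m \subset \mathcal{O}_{U,\bar u}^\times$ for every geometric point, $k$ being characteristic $0$ and algebraically closed). Hence $\underline{\text{Ext}}^1_U(\mathbb{Z}/m\mathbb{Z},\mathbb{G}_m) = 0$, and the spectral sequence collapses to isomorphisms $\text{Ext}^r_U(\mathbb{Z}/m\mathbb{Z},\mathbb{G}_m) \cong H^r(U,\mu_m) \cong H^r(U,\mathbb{Z}/m\mathbb{Z})$, which is finite by the first part.

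The main obstacle is the bookkeeping around which cohomology groups over $X$ are actually finite, i.e. making sure Proposition \ref{G_m} genuinely supplies finiteness of all $H^r(X,\mu_m)$ and not merely of $\text{Pic}\,X/m$ and ${_m}\text{Pic}\,X$ — in particular handling $H^2(X,\mu_m)$, which requires the finiteness of ${_m}\text{Br}\,X$ that is immediate from part (iii), and confirming vanishing in degrees $\geq 3$. A secondary, more delicate point is the passage from $X$ to an arbitrary open $U$: one must be careful that the localization sequence of Remark \ref{suppferrq} is available for the \emph{constant} sheaf $\mathbb{Z}/m\mathbb{Z}$ (it is, being a special case of a constructible sheaf) and that the boundary terms $H^{r}(K_v,\mathbb{Z}/m\mathbb{Z})$ for $v \in X \setminus U$ are finite — which holds because each $K_v$ has finite cohomology with finite coefficients (it is the fraction field of a henselian DVR with residue field either $k$ or $k(v)$, both of finite cohomological dimension with finite torsion cohomology). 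Once these two reductions are in place the argument is purely formal.
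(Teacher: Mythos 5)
Your argument is correct and follows essentially the same route as the paper: finiteness over $X$ via the Kummer sequence and the resolution $0\to\mathbb{Z}\to\mathbb{Z}\to\mathbb{Z}/m\mathbb{Z}\to 0$ (the paper uses the resulting long exact sequence of global $\text{Ext}$ groups directly, where you compute the $\underline{\text{Ext}}$ sheaves and collapse the spectral sequence, which amounts to the same thing), with Proposition \ref{G_m} as the key finiteness input and Remark \ref{suppferrq} for the passage from $X$ to a general $U$. One small slip that does not affect the conclusion: $H^3(X,\mu_m)$ is not $0$ but is isomorphic to $\mathbb{Z}/m\mathbb{Z}$, since the Kummer sequence exhibits it as an extension of ${_m}H^3(X,\mathbb{G}_m)\cong {_m}(\mathbb{Q}/\mathbb{Z})$ by $\text{Br}(X)/m=0$, not just the latter term.
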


\begin{proof}
Les suites exactes de faisceaux:
\begin{gather*}
0 \rightarrow \mathbb{Z} \rightarrow \mathbb{Z} \rightarrow \mathbb{Z}/m\mathbb{Z} \rightarrow 0\\
0 \rightarrow \mu_m \rightarrow \mathbb{G}_m \rightarrow \mathbb{G}_m \rightarrow 0
\end{gather*}
fournissent des suites exactes longues:
\begin{gather*}
... \rightarrow \text{Ext}^r_X(\mathbb{Z}/m\mathbb{Z}, \mathbb{G}_m) \rightarrow H^r(X, \mathbb{G}_m) \rightarrow H^r(X,\mathbb{G}_m) \rightarrow ... \\
... \rightarrow H^r(X,\mathbb{Z}/m\mathbb{Z}) \rightarrow H^r(X,\mathbb{G}_m) \rightarrow H^r(X,\mathbb{G}_m) \rightarrow ...
\end{gather*}
Or, d'après le lemme \ref{G_m}, les groupes ${_m}H^r(X,\mathbb{G}_m)$ et $H^r(X,\mathbb{G}_m)/m$ sont finis pour tout $r \in \mathbb{N}$. On en déduit que $H^r(X,\mathbb{Z}/m\mathbb{Z})$ et $\text{Ext}^r_X(\mathbb{Z}/m\mathbb{Z},\mathbb{G}_m)$ sont finis. La remarque \ref{suppferrq} permet alors de conclure. 
\end{proof}

\begin{proposition} \label{finitude}
Rappelons que $U$ est un ouvert non vide de $X$ et que $F$ est un faisceau constructible sur $U$. Soit $r \in \mathbb{N}$.
\begin{itemize}
\item[(i)] Le groupe $H^r_c(U,F)$ est fini.
\item[(ii)] Le groupe $ \text{Ext}^r_U(F,\mathbb{G}_m)$ est fini.
\end{itemize}
\end{proposition}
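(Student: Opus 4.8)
Le plan est de se ramener, par dévissages successifs, à une application du lemme \ref{finiZ/m} sur un revêtement convenable de $U$. Je commencerais par réduire au cas où $F$ est localement constant constructible : pour $F$ constructible quelconque, il existe un ouvert dense $j:V\hookrightarrow U$ sur lequel $F|_V$ est localement constant constructible, de complémentaire $i:Z\hookrightarrow U$ un ensemble fini de points fermés, et la suite exacte $0\to j_!(F|_V)\to F\to i_*i^*F\to 0$ ramène, via les suites exactes longues, la finitude de $\text{Ext}^r_U(F,\mathbb{G}_m)$ et de $H^r_c(U,F)$ à celle des groupes analogues pour $j_!(F|_V)$ et pour $i_*i^*F$. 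Or, d'après la proposition \ref{suppfer}, les groupes $\text{Ext}^r_U(i_*i^*F,\mathbb{G}_m)$ et $H^r_c(U,i_*i^*F)$ sont finis, tandis que $\text{Ext}^r_U(j_!(F|_V),\mathbb{G}_m)\cong\text{Ext}^r_V(F|_V,\mathbb{G}_m)$ et $H^r_c(U,j_!(F|_V))\cong H^r_c(V,F|_V)$, comme dans la preuve de la proposition \ref{change ouvert}. On peut donc remplacer $(U,F)$ par $(V,F|_V)$ et supposer $F$ localement constant constructible, tué par un entier $m\geq 1$.

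La seconde réduction ramène le calcul des $\text{Ext}$ à celui de la cohomologie. Comme la caractéristique est nulle, la suite de Kummer montre que le faisceau $\mathbb{G}_m$ est divisible sur le petit site étale de $U$, au sens où $n:\mathbb{G}_m\to\mathbb{G}_m$ est un épimorphisme de faisceaux pour tout $n$ ; il en résulte que $\underline{\text{Ext}}^q_U(F,\mathbb{G}_m)=0$ pour $q\geq 1$ (on vérifie l'annulation après restriction à un revêtement étale trivialisant $F$) et que $\underline{\text{Ext}}^0_U(F,\mathbb{G}_m)=\underline{\text{Hom}}_U(F,\mu_m)=:F'$, qui est encore localement constant constructible et tué par $m$. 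La suite spectrale locale-globale donne alors un isomorphisme $\text{Ext}^r_U(F,\mathbb{G}_m)\cong H^r(U,F')$. Par ailleurs, la dernière suite exacte de la remarque \ref{suppferrq}, jointe à la finitude des $H^i(K_v,F)$ pour $v\in X\setminus U$ (le corps $K_v$ étant de dimension cohomologique $2$ et $F$ étant fini), montre que $H^r_c(U,F)$ est fini si et seulement si $H^r(U,F)$ l'est. Tout se ramène donc à l'énoncé suivant : pour tout faisceau localement constant constructible $G$ sur $U$ tué par $m$, le groupe $H^r(U,G)$ est fini pour tout $r$.

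Pour établir ce dernier point, j'observerais que $U$ est connexe (ouvert non vide du schéma intègre $X$) et je choisirais un revêtement étale fini galoisien connexe $h:U'\to U$, de groupe $\mathfrak{G}$, trivialisant $G$, de sorte que $h^*G\cong M_{U'}$ où $M=G_{\overline{\eta}}$ est un groupe abélien fini tué par $m$. Comme $U'$ est normal et fini sur $U$, c'est la normalisation de $U$ dans $L:=k(U')$, donc un ouvert de $X_L=\text{Spec}(\mathcal{O}_L)\setminus\{\mathfrak{m}_L\}$, où $\mathcal{O}_L$ est la clôture intégrale de $R_0$ dans $L$ ; comme $L/K_0$ est finie, $\mathcal{O}_L$ vérifie encore les hypothèses de cette section, et donc tous les résultats précédents, en particulier le lemme \ref{finiZ/m}, valent sur $X_L$ et sur son ouvert $U'$. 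On en déduit que $H^q(U',\mathbb{Z}/p\mathbb{Z})$ est fini pour tout nombre premier $p$ et tout $q$, puis, en filtrant $M$ par des sous-groupes à quotients successifs de la forme $\mathbb{Z}/p\mathbb{Z}$, que $H^q(U',M_{U'})$ est fini pour tout $q$. Enfin, la suite spectrale de Hochschild-Serre $E_2^{p,q}=H^p(\mathfrak{G},H^q(U',M_{U'}))\Rightarrow H^{p+q}(U,G)$ est concentrée dans le premier quadrant et tous ses termes $E_2^{p,q}$ sont finis ($\mathfrak{G}$ étant un groupe fini et les coefficients étant finis) ; chaque $H^n(U,G)$ admettant une filtration finie dont les quotients sont des sous-quotients des groupes finis $E_\infty^{p,n-p}$ pour $0\leq p\leq n$, il est fini. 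En remontant les réductions précédentes, on obtient la proposition.

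Le point délicat est ce dernier argument : les faisceaux localement constants constructibles sur $U$ n'admettent pas de résolution finie par des faisceaux constants sur $U$ lui-même, ce qui impose de passer à un revêtement galoisien ; l'observation clef est qu'un tel revêtement est de nouveau un ouvert dans un schéma du type étudié dans cette section, ce qui rend le lemme \ref{finiZ/m} disponible en amont. La suite spectrale de Hochschild-Serre permet alors de redescendre la finitude sur $U$, l'essentiel étant qu'une suite spectrale du premier quadrant calcule chaque $H^n$ à l'aide d'un nombre fini de termes $E_2$, bien que le groupe fini $\mathfrak{G}$ possède de la cohomologie en tout degré.
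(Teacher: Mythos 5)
Votre démonstration est correcte et suit pour l'essentiel la même stratégie que celle de l'article : réduction au cas localement constant via la décomposition $0\to j_!j^*F\to F\to i_*i^*F\to 0$ et la proposition \ref{suppfer} (c'est le contenu de la remarque \ref{suppferrq}), puis passage à la normalisation $U'$ de $U$ dans une extension finie galoisienne $L$ de $K$ trivialisant le faisceau --- l'observation clef, commune aux deux textes, étant que $U'$ est de nouveau un ouvert d'un schéma du type étudié dans cette section, ce qui rend le lemme \ref{finiZ/m} disponible --- et enfin suite spectrale de Hochschild-Serre. La seule divergence concerne le point (ii) : l'article applique directement la suite spectrale $H^r(G,\text{Ext}^s_{V'}(F|_{V'},\mathbb{G}_m))\Rightarrow\text{Ext}^{r+s}_V(F|_V,\mathbb{G}_m)$ en utilisant la finitude des $\text{Ext}^s_{V'}(\mathbb{Z}/m\mathbb{Z},\mathbb{G}_m)$ fournie par le lemme \ref{finiZ/m}, tandis que vous ramenez d'abord $\text{Ext}^r_U(F,\mathbb{G}_m)$ à $H^r(U,F')$ grâce à l'annulation des faisceaux $\underline{\text{Ext}}^q_U(F,\mathbb{G}_m)$ pour $q\geq 1$ --- c'est exactement le lemme \ref{faisceauext}, que l'article n'établit que dans le paragraphe suivant --- puis appliquez (i) au faisceau $F'=\underline{\text{Hom}}_U(F,\mu_m)$. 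Les deux variantes sont valables et de difficulté comparable ; la vôtre a l'avantage de ne traiter la descente galoisienne qu'une seule fois, au prix d'invoquer un peu plus tôt la divisibilité des tiges de $\mathbb{G}_m$ en caractéristique résiduelle nulle.
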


\begin{proof}
\begin{itemize}
\item[(i)]
Soit $ V$ un ouvert non vide de $U$ sur lequel $F$ est localement constant. Considérons un morphisme fini étale $V' \rightarrow V$ tel que $V'$ est connexe et $F|_{V'}$ est constant. On remarque alors que $V'$ est la normalisation de $V$ dans une extension finie $L$ de $K$. Si l'on note $G= \text{Gal } (L/K)$, on dispose alors de la suite spectrale de Hochschild-Serre:
$$H^r(G,H^s(V',F|_{V'})) \Rightarrow H^{r+s}(V,F|_V).$$
Or $H^s(V',F|_{V'})$ est fini pour $s \geq 0$ d'après le lemme \ref{finiZ/m}. Par conséquent, $H^r(V,F|_V)$ est fini pour tout $r \geq 0$. La remarque \ref{suppferrq} permet alors de conclure. 
\item[(ii)] En reprenant les notations de (i), on a une suite spectrale:
$$H^r(G,\text{Ext}^s_{V'}(F|_{V'},\mathbb{G}_m)) \Rightarrow \text{Ext}^{r+s}_V(F|_V,\mathbb{G}_m).$$
Or $\text{Ext}^s_{V'}(F|_{V'},\mathbb{G}_m)$ est fini pour $s \geq 0$ d'après le lemme \ref{finiZ/m}. Par conséquent, $\text{Ext}^{r}_V(F|_V,\mathbb{G}_m)$ est fini pour tout $r \geq 0$. La remarque \ref{suppferrq} permet alors de conclure. 
\end{itemize}
\end{proof}

\subsection{Annulation de $H^r_c(U,F)$ pour $r\geq 4$}

\begin{lemma}\label{H=0}
On a $H^r(U,F) = 0$ pour $r \geq 4$.
\end{lemma}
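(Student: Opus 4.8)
The statement $H^r(U,F)=0$ for $r\geq 4$ should follow from a dévissage reducing $F$ to a locally constant sheaf, then to the constant sheaf $\mathbb{Z}/m\mathbb{Z}$ via a finite étale cover and Hochschild–Serre (which can only raise the cohomological degree by a bounded amount once one controls the cohomology of the cover), and finally to the already-established vanishing $H^r(U,\mathbb{G}_m)=0$ for $r>3$ from Proposition \ref{G_m}(i). More precisely, first I would pass to an open $V\subseteq U$ on which $F$ is locally constant; by the localisation sequence relating $H^r(U,F)$, $H^r(V,F|_V)$ and the $H^r(k(v),F)$ for $v\in U\setminus V$ (the latter vanishing for $r\geq 2$ since the $k(v)$ have cohomological dimension $1$), it suffices to prove the vanishing on $V$. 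Then I would choose a connected finite étale cover $V'\to V$ trivialising $F$, so that $V'$ is the normalisation of $R_0$ in a finite extension of $K$, write $G=\mathrm{Gal}(L/K)$, and use the Hochschild–Serre spectral sequence $H^p(G,H^q(V',F|_{V'}))\Rightarrow H^{p+q}(V,F|_V)$.

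**Reducing to $\mathbb{Z}/m\mathbb{Z}$ on $V'$.** The point is then that $H^q(V',\mathbb{Z}/m\mathbb{Z})=0$ for $q\geq 4$: indeed $V'$ is again an open of the punctured spectrum of a two-dimensional normal henselian excellent local ring of residue field $k$, so Proposition \ref{G_m}(i) applied to it gives $H^q(V',\mathbb{G}_m)=0$ for $q>3$, and the Kummer sequence $1\to\mu_m\to\mathbb{G}_m\to\mathbb{G}_m\to 1$ forces $H^q(V',\mathbb{Z}/m\mathbb{Z})=H^q(V',\mu_m)=0$ for $q\geq 4$ as well (after choosing a root of unity to identify $\mu_m$ with $\mathbb{Z}/m\mathbb{Z}$). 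Since $F|_{V'}$ is a constant sheaf of finite abelian groups, it is a finite direct sum of copies of various $\mathbb{Z}/m\mathbb{Z}$, so $H^q(V',F|_{V'})=0$ for $q\geq 4$. Now the only nonzero columns of the Hochschild–Serre spectral sequence are $q\in\{0,1,2,3\}$; combined with $\mathrm{cd}(G)$: since $G$ is a quotient of $\mathrm{Gal}(K^s/K)$ and $K$ has cohomological dimension $2$, only $p\in\{0,1,2\}$ contribute — wait, this alone gives vanishing only for $r\geq 6$, so I need the sharper input that $H^p(G,-)$ applied to a torsion module vanishes for $p$ large, which for a general finite $G$ it does not.

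**The real argument.** So instead of bounding $\mathrm{cd}(G)$ I would bound things directly: the cleanest route is to observe that $V'$ itself is a Dedekind scheme of the same type (open in the punctured spectrum of $\mathcal{O}_L$), apply Proposition \ref{G_m}(i) \emph{to $V'$} to get $H^q(V',\mathbb{G}_m)=0$ for $q>3$, hence $H^q(V',\mathbb{Z}/m\mathbb{Z})=0$ for $q\geq 4$, and then — rather than Hochschild–Serre over the possibly-large group $G$ — use that $F|_V$ becomes constant on $V'$ to run the dévissage the other way: write the cohomology of $F|_V$ in terms of that of $\pi_*\mathbb{Z}/m\mathbb{Z}^{\oplus(\cdot)}$ where $\pi:V'\to V$ is finite, so $H^q(V,F|_V)$ is a subquotient of $H^q(V',(\text{constant}))=0$ for $q\geq 4$ by finiteness of $\pi$ (the Leray spectral sequence of a finite morphism degenerates). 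Then patch back via the localisation sequence on $U\setminus V$ as above. The main obstacle is precisely this finiteness-of-the-cover bookkeeping: one must make sure that replacing $U$ by $V$ and $F$ by $F|_V$ genuinely loses nothing in degrees $\geq 4$ — which is exactly the content of Remarque \ref{suppferrq} together with $\mathrm{cd}\,k(v)\leq 1$ — and that the cover $V'$ really is of the form handled by Proposition \ref{G_m}, i.e. that one may take the integral closure $\mathcal{O}_L$ of $\mathcal{O}_K$ in $L$, which is again local henselian excellent of dimension $2$ with residue field $k$ (algebraically closed), as recalled at the start of \S\ref{desingu}.
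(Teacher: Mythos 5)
Your reduction to an open $V$ on which $F$ is locally constant is essentially sound (though the third term in the localisation sequence is the local cohomology $H^r_v(\mathcal{O}_v^h,F)$, which requires both $H^{r-1}(K_v^h,F)=0$ and $H^r(k(v),F)=0$, hence $r\geq 4$ and not merely $r\geq 2$, since $\mathrm{cd}\,K_v^h=2$). The genuine gap is in your ``real argument''. You correctly observe that Hochschild--Serre over the finite group $G$ does not give the vanishing, but the substitute you propose is equally invalid: from the adjunction injection $F|_V\hookrightarrow \pi_*\pi^*(F|_V)$ one does \emph{not} get that $H^q(V,F|_V)$ is a subquotient of $H^q(V,\pi_*\pi^*(F|_V))=H^q(V',\text{constant})$. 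An injection of sheaves only yields the exact sequence $H^{q-1}(V,Q)\to H^q(V,F|_V)\to H^q(V,\pi_*\pi^*(F|_V))$ where $Q$ is the cokernel, so $H^q(V,F|_V)$ is controlled by $H^{q-1}(V,Q)$ with $q-1=3$ when $q=4$ --- a degree outside the range you are proving, and $Q$ is just another constructible sheaf, so the dévissage does not close. The trace argument that would make $F|_V$ a direct summand of $\pi_*\pi^*(F|_V)$ requires $\deg\pi$ invertible on $F$, which fails precisely for the covers needed to trivialise $F$. (Iterating the embedding into such acyclic sheaves to build a resolution runs into the same off-range degrees.)

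The paper avoids this entirely and never invokes Proposition \ref{G_m} here: it shows that for \emph{every} open $V\subseteq U$ the restriction $H^r(U,F)\to H^r(V,F)$ is an isomorphism for $r\geq 4$, using excision $H^r_{U\setminus V}(U,F)=\bigoplus_v H^r_v(\mathcal{O}_v^h,F)$ and the vanishing of $H^{r-1}(K_v^h,F)$ and $H^r(\mathcal{O}_v^h,F)$ by cohomological dimension; passing to the limit over all $V$ identifies $H^r(U,F)$ with $H^r(K,g^*F)$, which vanishes since $\mathrm{cd}\,K=2$. You should adopt this reduction to the generic point (or find some other global bound on the cohomological dimension of $U$); the passage from constant coefficients on a cover back down to $F$ cannot be done by the subquotient claim as written.
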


\begin{proof}
Soit $V$ un ouvert de $U$ et écrivons la suite exacte de localisation:
$$... \rightarrow H^r(U,F) \rightarrow H^r(V,F) \rightarrow H^{r+1}_{U \setminus V}(U,F) \rightarrow ...$$
Le théorème d'excision montre que: $$H^{r}_{U \setminus V}(U,F) = \bigoplus_{v \in U \setminus V} H^{r}_v(\mathcal{O}^h_{v},F).$$
Or, pour $v \in U \setminus V$, on dispose de la suite exacte de localisation:
$$... \rightarrow  H^{r-1}(K_v^h,F) \rightarrow H^{r}_{v}(\mathcal{O}^h_{v},F) \rightarrow H^{r}(\mathcal{O}^h_{v},F) \rightarrow ...$$
Par dimension cohomologique, si $r \geq 4$, les groupes $H^{r-1}(K_v^h,F)$ et $H^{r}(\mathcal{O}^h_{v},F)$ sont nuls. Il en est donc de même du groupe $H^{r}_{v}(\mathcal{O}^h_{v},F)$. Par conséquent, pour $r \geq 4$, la restriction $H^r(U,F) \rightarrow H^r(V,F)$ est un isomorphisme. En notant $g: \text{Spec} \; K \hookrightarrow U$, on en déduit que:
$$H^r(U,F) \cong H^r(K,g^*F)=0$$
car $K$ est de dimension cohomologique 2.
\end{proof}

\begin{corollary}\label{Hc=0}
On a $H^r_c(U,F) = 0$ pour $r \geq 4$.
\end{corollary}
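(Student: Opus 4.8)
The plan is to deduce the corollary from Lemme \ref{H=0} together with the comparison between ordinary and compactly-supported cohomology that has already been set up in the excerpt. Recall that for $j: U \hookrightarrow X$ with closed complement inside each local ring, one has the localisation sequence
$$\cdots \rightarrow \bigoplus_{v \in X \setminus U} H^{r-1}(K_v,F) \rightarrow H^r_c(U,F) \rightarrow H^r(U,F) \rightarrow \bigoplus_{v \in X \setminus U} H^r(K_v,F) \rightarrow \cdots$$
exactly as recalled in Remarque \ref{suppferrq}. So the proof is a two-line diagram chase.

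First I would fix $r \geq 4$. By Lemme \ref{H=0} the term $H^r(U,F)$ vanishes. It therefore suffices to show that the group $\bigoplus_{v \in X \setminus U} H^{r-1}(K_v,F)$ that maps onto $H^r_c(U,F)$ is itself zero, i.e. that $H^{r-1}(K_v,F) = 0$ for every $v \in X \setminus U$. This is a cohomological dimension statement: since $r-1 \geq 3$ and each completion $K_v$ is, in the relevant range, of cohomological dimension at most $2$ (it is either the fraction field of a henselian DVR with residue field $k$ of cohomological dimension $1$, hence of cohomological dimension $2$, or — for $v$ lying over $\mathfrak m$ — the completion at a point of the function field of a curve over $k$, again of cohomological dimension $2$, cf. the discussion of the $k(v)$ in the ``corps étudiés'' section), the torsion sheaf $F$ has no cohomology in degrees $\geq 3$ over $K_v$. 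Hence $H^{r-1}(K_v,F)=0$.

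Feeding both vanishings into the localisation sequence gives $H^r_c(U,F)$ squeezed between two zero groups, so $H^r_c(U,F) = 0$ for all $r \geq 4$, which is the claim. There is really no obstacle here; the only point requiring a word of care is making sure the completions $K_v$ genuinely have cohomological dimension $\leq 2$ for every $v$, including the ``vertical'' $v$ over the maximal ideal, but this is precisely what was recorded earlier in the excerpt. Alternatively, and perhaps even more cleanly, one can simply invoke Lemme \ref{H=0} applied to $j_!F$ on $X$ directly: $H^r_c(U,F) = H^r(X, j_!F)$, and since $j_!F$ is again a constructible (torsion) sheaf on the Dedekind scheme $X$, the proof of Lemme \ref{H=0} applies verbatim to it, giving $H^r(X,j_!F)=0$ for $r \geq 4$. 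Either route closes the argument immediately.
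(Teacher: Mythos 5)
Your proof is correct; the alternative you mention at the end --- applying the Lemme \ref{H=0} to $j_!F$ on $X$ --- is verbatim the paper's own one-line proof. Your primary route via the localisation sequence and $\mathrm{cd}(K_v)\leq 2$ is also valid, but it is essentially the same argument unwound, since the proof of the Lemme \ref{H=0} already rests on exactly that cohomological-dimension bound for the $K_v^h$.
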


\begin{proof}
Il suffit d'appliquer le lemme précédent en remplaçant $U$ par $X$ et $F$ par $j_!F$.
\end{proof}

\subsection{Annulation de $\text{Ext}^r_U(F,\mathbb{G}_m)$ pour $r\geq 4$}

\begin{lemma}\label{faisceauext}
Supposons $F$ localement constant. Le faisceau $\text{\underline{Ext}}^r_U(F,\mathbb{G}_m)$ est de torsion pour $r=0$ et nul pour $r \geq 1$.
\end{lemma}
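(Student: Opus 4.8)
The statement is local on the étale site of $U$, so I would compute the stalks of $\underline{\text{Ext}}^r_U(F,\mathbb{G}_m)$ at geometric points. For $\bar{v} \to U$ a geometric point, the stalk of $\underline{\text{Ext}}^r_U(F,\mathbb{G}_m)$ at $\bar{v}$ is $\text{Ext}^r_{\mathcal{O}_{U,\bar{v}}}(F_{\bar{v}}, \mathbb{G}_m)$, where $\mathcal{O}_{U,\bar{v}}$ is the strictly henselian local ring at $\bar{v}$ (this commutation of $\underline{\text{Ext}}$ with taking stalks holds because $F$ is constructible, hence $F_{\bar v}$ admits a finite resolution by sheaves of the form $j_!\mathbb{Z}$ locally, and $\mathbb{G}_m$ is compatible with the relevant limits). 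Since $F$ is locally constant, after passing to the stalk $F_{\bar v}$ is a finite abelian group placed in the trivial module. Over a strictly henselian local ring, computing $\text{Ext}^r$ against $\mathbb{G}_m$ reduces, via a free resolution $\mathbb{Z}^{a} \to \mathbb{Z}^{b} \to F_{\bar v} \to 0$ of the abelian group $F_{\bar v}$, to the cohomology of a complex built out of copies of $\mathbb{G}_m(\mathcal{O}_{U,\bar v}^{sh}) = (\mathcal{O}_{U,\bar v}^{sh})^{\times}$ and the higher étale cohomology groups $H^{>0}$ of $\mathbb{G}_m$ over that strictly henselian ring.

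Next I would invoke the vanishing $H^s(\text{Spec}\,\mathcal{O}_{U,\bar v}^{sh},\mathbb{G}_m)=0$ for $s \geq 1$: for $s=1$ this is Hilbert 90 (or the fact that $\text{Pic}$ of a local ring is trivial), and for $s \geq 2$ it follows because $X$ is a Dedekind scheme so its strictly henselian local rings are strictly henselian DVRs (or a separably closed field at the generic point), which have cohomological dimension $\leq 1$ for torsion sheaves and whose Brauer group vanishes — this is exactly the input already used in the proof of Proposition \ref{G_m}(i), where it is checked that $R^s g_* \mathbb{G}_m = 0$ for $s > 0$. Consequently the complex computing the stalk of $\underline{\text{Ext}}^r$ degenerates to one involving only $(\mathcal{O}_{U,\bar v}^{sh})^{\times}$ in degree zero. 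Therefore $\underline{\text{Ext}}^r_U(F,\mathbb{G}_m)_{\bar v} = \text{Ext}^r_{\mathbb{Z}}(F_{\bar v}, (\mathcal{O}_{U,\bar v}^{sh})^{\times})$, which vanishes for $r \geq 2$ because $F_{\bar v}$ has projective dimension $\leq 1$ as a $\mathbb{Z}$-module, and is $\text{Hom}_{\mathbb{Z}}(F_{\bar v},(\mathcal{O}_{U,\bar v}^{sh})^{\times})$ for $r = 0$.

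For $r = 1$ I still need to kill the stalk: $\text{Ext}^1_{\mathbb{Z}}(F_{\bar v}, (\mathcal{O}_{U,\bar v}^{sh})^{\times}) = (\mathcal{O}_{U,\bar v}^{sh})^{\times}/m$ where $m$ annihilates $F_{\bar v}$, and this vanishes precisely because $(\mathcal{O}_{U,\bar v}^{sh})^{\times}$ is $m$-divisible — the residue field of $\mathcal{O}_{U,\bar v}^{sh}$ is separably (here algebraically) closed of characteristic zero, so by Hensel's lemma every unit is an $m$-th power. This is the same divisibility input used in the proof of Proposition \ref{G_m}(iv) for $\mathcal{O}_X(X)^{\times}$. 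Finally, for $r = 0$, the stalk $\text{Hom}_{\mathbb{Z}}(F_{\bar v},(\mathcal{O}_{U,\bar v}^{sh})^{\times})$ is a torsion group since $F_{\bar v}$ is finite, so $\underline{\text{Hom}}_U(F,\mathbb{G}_m)$ is a torsion sheaf. The main obstacle is the bookkeeping needed to justify rigorously that $\underline{\text{Ext}}^r$ commutes with passage to stalks for the constructible (here locally constant) sheaf $F$ — once that is granted, everything reduces to the elementary computation of $\text{Ext}^*_{\mathbb{Z}}$ against a divisible group together with the cohomological-dimension facts already established.
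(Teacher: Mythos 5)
Your proposal is correct and follows essentially the same route as the paper: compute the stalks of $\underline{\text{Ext}}^r_U(F,\mathbb{G}_m)$ at geometric points, identify them with $\operatorname{Ext}^r_{\mathbb{Z}}(F_{\bar v}, (\mathcal{O}_{U,\bar v}^{sh})^{\times})$ (resp. $\operatorname{Ext}^r(F_{\bar\eta},K^{s,\times})$ at the generic point), and conclude by divisibility of these unit groups. The paper states the stalk identification without justification, whereas you spell out the needed commutation with stalks and the vanishing of the higher cohomology of $\mathbb{G}_m$ over strictly henselian local rings; this extra detail is welcome but does not change the argument.
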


\begin{proof}
Calculons les tiges de $\text{\underline{Ext}}^r_U(F,\mathbb{G}_m)$:
\begin{itemize}
\item[$\bullet$] en $\overline{\eta}$, $\text{\underline{Ext}}^r_U(F,\mathbb{G}_m)_{\overline{\eta}}= \text{Ext}^r(F_{\overline{\eta}},{K^s}^{\times})$. Ce groupe est de $|F_{\overline{\eta}}|$-torsion pour $r=0$, et il est nul pour $r \geq 1$ puisque ${K^s}^{\times}$ est un groupe abélien divisible.
\item[$\bullet$] en $\overline{v}$ pour $v \in U$ différent de $\eta$, $\text{\underline{Ext}}^r_U(F,\mathbb{G}_m)_{\overline{v}}= \text{Ext}^r_{\mathbb{Z}}(F_{\overline{v}},{\mathcal{O}_v^{nr}}^{\times})$. Comme avant, ce groupe est de $|F_{\overline{\eta}}|$-torsion pour $r=0$, et il est nul pour $r \geq 1$ puisque ${\mathcal{O}_v^{nr}}^{\times}$ est un groupe abélien divisible.
\end{itemize}
Le lemme en découle immédiatement.
\end{proof}

\begin{lemma}\label{Ext=0}
On a $\text{Ext}^r_U(F,\mathbb{G}_m) = 0$ pour $r \geq 4$.
\end{lemma}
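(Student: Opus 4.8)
The plan is to reduce to the case where $F$ is locally constant and then to compute $\text{Ext}^\bullet_U(F,\mathbb{G}_m)$ via the local-to-global spectral sequence, exploiting the vanishing of the higher $\underline{\text{Ext}}$-sheaves (Lemme~\ref{faisceauext}) and the vanishing of étale cohomology in degrees $\geq 4$ (Lemme~\ref{H=0}).

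First I would invoke the last assertion of Remarque~\ref{suppferrq}: for $r \geq 4$ the group $\text{Ext}^r_U(F,\mathbb{G}_m)$ is zero if and only if $\text{Ext}^r_V(F|_V,\mathbb{G}_m)$ is, for any non-empty open $V \subseteq U$. Choosing $V$ such that $F|_V$ is locally constant, we are reduced to proving the statement for a locally constant constructible sheaf on $V$. Such a sheaf is finite locally constant and annihilated by some $n \in \mathbb{N}^*$, so the sheaf $\underline{\text{Hom}}_V(F|_V,\mathbb{G}_m) = \underline{\text{Hom}}_V(F|_V,\mu_n)$ is again finite locally constant, in particular constructible.

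Then I would use the local-to-global spectral sequence
$$H^p(V,\underline{\text{Ext}}^q_V(F|_V,\mathbb{G}_m)) \Rightarrow \text{Ext}^{p+q}_V(F|_V,\mathbb{G}_m).$$
By Lemme~\ref{faisceauext}, $\underline{\text{Ext}}^q_V(F|_V,\mathbb{G}_m) = 0$ for $q \geq 1$, so the spectral sequence has a single non-zero row and degenerates, yielding isomorphisms $\text{Ext}^r_V(F|_V,\mathbb{G}_m) \cong H^r(V,\underline{\text{Hom}}_V(F|_V,\mathbb{G}_m))$ for every $r$. Applying Lemme~\ref{H=0} to the constructible sheaf $\underline{\text{Hom}}_V(F|_V,\mathbb{G}_m)$ on $V$ (its proof, which only uses localization, excision and cohomological dimension, works verbatim for any constructible sheaf on any non-empty open of $X$), the right-hand side vanishes for $r \geq 4$, which gives the claim.

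There is essentially no obstacle: the delicate input — the vanishing of the sheaves $\underline{\text{Ext}}^q$ for $q \geq 1$, which makes the spectral sequence collapse onto the row $q=0$ — is precisely Lemme~\ref{faisceauext}, and the rest is a formal reduction. The only points deserving a line of justification are that passing to $V$ loses nothing in degrees $\geq 4$ and that $\underline{\text{Hom}}_V(F|_V,\mathbb{G}_m)$ is constructible, so that Lemme~\ref{H=0} may be applied to it. Alternatively one could bypass the spectral sequence by a dévissage along $0 \to j_!j^*F \to F \to i_*i^*F \to 0$: Lemme~\ref{2.1} identifies $\text{Ext}^r_U(i_*i^*F,\mathbb{G}_m)$ with $\bigoplus_v H^{r-2}(k(v),\text{Hom}(F_{\overline{v}},\mathbb{Q}/\mathbb{Z}))$, which vanishes for $r \geq 4$ since each $k(v)$ has cohomological dimension $\leq 1$, thereby reducing again to the locally constant sheaf $j^*F$ on $V$.
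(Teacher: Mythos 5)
Your proof is correct and follows essentially the same route as the paper: reduce to the locally constant case via Remarque \ref{suppferrq}, collapse the local-to-global spectral sequence using Lemme \ref{faisceauext}, and conclude with Lemme \ref{H=0}. The only (harmless) variation is that you observe $\underline{\text{Hom}}_V(F|_V,\mathbb{G}_m)=\underline{\text{Hom}}_V(F|_V,\mu_n)$ is itself constructible, whereas the paper applies Lemme \ref{H=0} after writing this torsion sheaf as a filtered colimit of constructible sheaves.
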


\begin{proof}
En utilisant la remarque \ref{suppferrq}, on peut supposer que $F$ est localement constant. Le lemme précédent montre alors que la suite spectrale $$H^r(U, \text{\underline{Ext}}^s_U(F,\mathbb{G}_m)) \Rightarrow \text{Ext}^{r+s}_U(F,\mathbb{G}_m)$$
dégénère en des isomorphismes $\text{Ext}^{r}_U(F,\mathbb{G}_m)\cong H^r(U, \text{\underline{Hom}}_U(F,\mathbb{G}_m)) $. Le faisceau $\text{\underline{Hom}}_U(F,\mathbb{G}_m)$ étant de torsion, il est limite inductive filtrée de faisceaux constructibles. On déduit alors du lemme \ref{H=0} que pour $r \geq 4$ le groupe $\text{Ext}^r_U(F,\mathbb{G}_m)$ est nul. 
\end{proof}

\subsection{Décomposition de $F$}

\begin{proposition}\label{dec}
Soit $V$ un ouvert non vide de $U$ tel que $F|_V$ est localement constant. Soit $L$ une extension finie de $K$ telle que la normalisation $ \pi_V: V_n \rightarrow V$ de $V$ dans $L$ est étale et $F|_{V_n}$ est constant. Soit $\pi_U: U_n \rightarrow U$ la normalisation de $U$ dans $L$. Soit $m>0$ tel que $mF=0$. Il existe alors un faisceau $F_n$ constructible constant sur $U_n$ de $m$-torsion, un faisceau $F_f$ constructible sur $U$ à support fini et un morphisme injectif $F \hookrightarrow {\pi_U}_*F_n \oplus F_f$.
\end{proposition}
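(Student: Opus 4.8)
The plan is to reduce the statement to a computation on the normalization $U_n$, where $\pi_U^*F$ becomes constant on a dense open and the monodromy that would otherwise obstruct us has disappeared. Throughout I use that $\pi_U\colon U_n\to U$ is finite (recall $\mathcal{O}_K$ is excellent) and surjective, that $U_n$ is an integral normal, hence connected Dedekind, scheme of dimension $1$, and that $\pi_U^{-1}(V)=V_n$ is a dense open of $U_n$ whose complement $Z_n:=U_n\setminus V_n$ is a finite set of closed points; write $j'\colon V_n\hookrightarrow U_n$ and $i'\colon Z_n\hookrightarrow U_n$. The sheaf $\mathcal{F}:=\pi_U^*F$ is constructible on $U_n$, killed by $m$, and $(j')^*\mathcal{F}=\pi_V^*(F|_V)$ is by hypothesis a constant sheaf $\underline{\Phi}$ with $\Phi$ finite of exponent dividing $m$.

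First I would write the recollement sequence $0\to j'_!(j')^*\mathcal{F}\to\mathcal{F}\to i'_*(i')^*\mathcal{F}\to 0$ together with the adjunction unit $\mathcal{F}\to(j')_*(j')^*\mathcal{F}$; since the composite $j'_!(j')^*\mathcal{F}\hookrightarrow\mathcal{F}\to(j')_*(j')^*\mathcal{F}$ is the canonical monomorphism $j'_!\to j'_*$, the resulting map $\mathcal{F}\to(j')_*(j')^*\mathcal{F}\oplus i'_*(i')^*\mathcal{F}$ is injective. The crucial observation is then that $(j')_*\underline{\Phi}=\underline{\Phi}_{U_n}$: a constant sheaf on a normal connected scheme is its own push-forward from any dense open (on the Dedekind scheme $U_n$ this is immediate on stalks, the stalk of $(j')_*\underline{\Phi}$ at a point of $Z_n$ being $\Phi$ computed on the fraction field of the strict henselization). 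Embedding $\Phi$ into $\Psi:=(\mathbb{Z}/m\mathbb{Z})^N$ for a suitable $N$ (each cyclic summand of $\Phi$ has order dividing $m$) gives an injection $\mathcal{F}\hookrightarrow\underline{\Psi}_{U_n}\oplus i'_*(i')^*\mathcal{F}$ on $U_n$.

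Finally I would apply the left exact functor $(\pi_U)_*$ and precompose with the unit $F\to(\pi_U)_*\pi_U^*F$, which is injective because $\pi_U$ is finite and surjective (on stalks it is a diagonal map into a nonempty finite product). Taking $F_n:=\underline{\Psi}_{U_n}$, a constant constructible $m$-torsion sheaf on $U_n$, and $F_f:=(\pi_U)_*\big(i'_*(i')^*\pi_U^*F\big)$, which is supported on the finite set $\pi_U(Z_n)\subseteq U\setminus V$ with finite stalks and hence is constructible with finite support, yields the required injection $F\hookrightarrow(\pi_U)_*F_n\oplus F_f$.

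I expect the only delicate point to be the identification $(j')_*\underline{\Phi}=\underline{\Phi}_{U_n}$ — equivalently, the realization that one must first pass to a cover trivializing $F$ over the open locus before applying the recollement trick. On $U$ the sheaf $F$ is merely locally constant on $V$, so $j_*(F|_V)$ carries a genuine monodromy and cannot in general be embedded into a constant sheaf plus a skyscraper; the passage to $U_n$ removes this obstruction, while the finiteness of $\pi_U$ is what lets the constant and skyscraper parts descend back to $U$.
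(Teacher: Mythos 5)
Your proof is correct and follows essentially the same strategy as the paper's: decompose $F$ into a piece that embeds into the pushforward of a constant sheaf from $U_n$ and a skyscraper piece supported on the finite set $U\setminus V$, then observe that the two kernels have disjoint supports. The differences are cosmetic (the paper takes $F_n=\underline{F(V_n)}$ and $F_f=i_*i^*F$ directly on $U$, without the extra embedding into $(\mathbb{Z}/m\mathbb{Z})^N$), and your identification $(j')_*\underline{\Phi}=\underline{\Phi}$ on the normal connected scheme $U_n$ in fact supplies the justification for the extension step ``ce morphisme s'étend en un morphisme $F \rightarrow {\pi_U}_*F_n$'' that the paper asserts without detail.
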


\begin{proof}
On choisit pour $F_n$ le faisceau constant sur $U_n$ défini par le groupe abélien fini $F(V_n)$. Nous disposons alors d'un morphisme d'adjonction injectif $F|_V \hookrightarrow {\pi_V}_*\pi_V^*(F|_V) = {\pi_V}_*(F_n|_{V_n})$. Ce morphisme s'étend en un morphisme $F \rightarrow {\pi_U}_*F_n$. Le support du noyau de ce morphisme est contenu dans $U \setminus V$. Il suffit donc de trouver un faisceau $F_f$ constructible sur $U$ à support dans $U \setminus V$ et un morphisme $F \rightarrow F_f$ tel que le support du noyau soit contenu dans $V$. Si l'on note $i: U \setminus V \hookrightarrow U$, on choisit $F_f = i_*i^*F$, le morphisme $F \rightarrow F_f$ étant le morphisme d'adjonction.
\end{proof}

 \subsection{Comportement vis-à-vis de la normalisation}

Nous allons maintenant étudier le comportement du morphisme $\alpha^r$ vis-à-vis de la normalisation. Soit $L$ une extension finie galoisienne de $K$. Soit $U_n$ la normalisation de $U$ dans $L$ et notons $\pi: U_n \rightarrow U$. Considérons l'application norme $N_{U_n/U}: \pi_*\mathbb{G}_m  \rightarrow \mathbb{G}_m$ (voir par exemple le lemme II.3.9(a) de \cite{MilADT}).

\begin{lemma}
Soit $\mathcal{F}_n$ un faisceau constructible sur $U_n$. Pour tout $r\in \mathbb{Z}$, la composée $$N^r(\mathcal{F}_n): \text{Ext}^r_{U_n}(\mathcal{F}_n,\mathbb{G}_m) \rightarrow \text{Ext}^r_U(\pi_*\mathcal{F}_n,\pi_*\mathbb{G}_m) \rightarrow \text{Ext}^r_U(\pi_*\mathcal{F}_n,\mathbb{G}_m),$$
où le premier morphisme est induit par le foncteur exact $\pi_*$ et le second par la norme $N_{U_n/U}$, est un isomorphisme.
\end{lemma}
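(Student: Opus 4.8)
The plan is to exploit that $\pi$ is finite. Then $\pi_*=\pi_!$ is exact on étale sheaves and admits a right adjoint $R\pi^!$ on the derived categories, so that, identifying $\text{Ext}^r_U(\pi_*\mathcal{F}_n,\mathbb{G}_m)=\text{Hom}_{D(U)}(\pi_*\mathcal{F}_n,\mathbb{G}_m[r])$, adjunction yields a natural isomorphism
$$\text{Hom}_{D(U)}(\pi_*\mathcal{F}_n,\mathbb{G}_m[r])\cong \text{Hom}_{D(U_n)}(\mathcal{F}_n,R\pi^!\mathbb{G}_m[r]).$$
The first of the two maps composing $N^r(\mathcal{F}_n)$ is ``appliquer $\pi_*$'' and the second is ``composer avec $N_{U_n/U}$'' ; en utilisant la naturalité de l'unité de l'adjonction $(\pi_!=\pi_*,\,R\pi^!)$, one checks that through the isomorphism above $N^r(\mathcal{F}_n)$ becomes the map $\text{Hom}_{D(U_n)}(\mathcal{F}_n,\mathbb{G}_m[r])\to\text{Hom}_{D(U_n)}(\mathcal{F}_n,R\pi^!\mathbb{G}_m[r])$ induced by a single morphism $u:\mathbb{G}_m\to R\pi^!\mathbb{G}_m$ in $D(U_n)$, namely the $(\pi_!,R\pi^!)$-adjoint of $N_{U_n/U}:\pi_*\mathbb{G}_m\to\mathbb{G}_m$. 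Hence it suffices to prove that $u$ is an isomorphism in $D(U_n)$ --- which in fact gives the statement for an arbitrary $\mathcal{F}_n$, constructible or not.

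Being an isomorphism in $D(U_n)$ can be tested on stalks of the cohomology sheaves, hence after strictly henselizing $U_n$ ; since $R\pi^!$ and $N_{U_n/U}$ are compatible with étale base change on $U$, this amounts to replacing $U$ by the spectrum of a strictly henselian local ring. At the generic point of $U$ the morphism $\pi$ is étale (we are in characteristic $0$), so there $R\pi^!\mathbb{G}_m=\pi^*\mathbb{G}_m=\mathbb{G}_m$ and $u$ is visibly an isomorphism ; only the closed points of $U$ are at issue. Localizing at a closed point and strictly henselizing, one reaches the following local situation: $A$ is a strictly henselian discrete valuation ring of residue characteristic $0$, with separably closed residue field $\kappa$, $B$ is a finite product of strictly henselian discrete valuation rings, each \emph{totalement} ramifié au-dessus de $A$ --- so each has residue field $\kappa$, i.e.\ residue degree $1$ over $A$ --- and $\pi:\text{Spec}\,B\to\text{Spec}\,A$ is the induced morphism.

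On each factor $\text{Spec}\,B_i$ one tests $u$ after the restriction $j^*$ to the generic point (where $\pi$ is étale, already settled) and after $i^!$ at the closed point $s$. For a henselian discrete valuation ring with separably closed residue field of characteristic $0$, the localization triangle $i_*i^!\mathbb{G}_m\to\mathbb{G}_m\to Rj_*\mathbb{G}_m$, together with the vanishing $R^qj_*\mathbb{G}_m=0$ pour $q\geq 1$ and the identification of $j_*\mathbb{G}_m/\mathbb{G}_m$ with the ``valuation'' sheaf $i_*\mathbb{Z}$ --- all exactly as in the proof of Proposition \ref{G_m} --- give $i^!\mathbb{G}_m\cong\mathbb{Z}[-1]$. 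Base change along the closed immersion (the fibre of $\text{Spec}\,B_i\to\text{Spec}\,A$ over its closed point is a thickening of $\text{Spec}\,\kappa$, whose structure map is trivial for étale abelian sheaves) identifies $i^!R\pi^!\mathbb{G}_m$ with $\mathbb{Z}[-1]$ as well, and under these identifications $i^!u$ is multiplication by the residue degree of $B_i/A$, which is $1$ (the norm scales valuations by the residue degree). So $i^!u$ is an isomorphism ; combined with the two localization triangles and the five lemma, $u$ is an isomorphism on $\text{Spec}\,B_i$, hence on $U_n$, which concludes.

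The main obstacle I anticipate is bookkeeping rather than conceptual: first, checking that $N^r(\mathcal{F}_n)$ really is the map induced by the adjoint $u$ of the norm --- a formal but lengthy diagram chase translating ``pousser par $\pi_*$, puis composer avec $N_{U_n/U}$'' into ``précomposer par $u$'' through $(\pi_!,R\pi^!)$-adjunction --- and second, the local computation of $\pi^!\mathbb{G}_m$, whose whole point is that the norm multiplies valuations by the residue degree and that this degree becomes $1$ once one strictly henselizes. The only new feature compared with the classical number-field case is that $\pi$ may be ramified, but the ramification is confined to closed points and, not affecting residue degrees after strict henselization, is harmless. (One could also avoid $R\pi^!$ and argue by dévissage, both sides being $\delta$-functors in $\mathcal{F}_n$ that vanish above degree $3$ by Lemme \ref{Ext=0}: the skyscraper case follows from Lemme \ref{2.1} and Proposition \ref{suppfer}, where $N^r$ is the corestriction on the Galois cohomology of residue fields, an isomorphism by le lemme de Shapiro ; and the locally constant case follows after having rétréci $U$ comme dans la Proposition \ref{change ouvert}.)
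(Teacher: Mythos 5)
Your main argument is correct, but it takes a genuinely different route from the paper's, whose proof is precisely the d\'evissage you only sketch in your closing parenthesis: the paper chooses an open $V_n\subseteq U_n$ over which $\pi$ is \'etale, splits $\mathcal{F}_n$ by $0\to {j_n}_!j_n^*\mathcal{F}_n\to\mathcal{F}_n\to {i_n}_*i_n^*\mathcal{F}_n\to 0$, treats the open part by the $((\pi\circ j_n)_!,(\pi\circ j_n)^*)$-adjunction (using $(\pi\circ j_n)^*\mathbb{G}_m=\mathbb{G}_m$ since $\pi\circ j_n$ is \'etale), and treats the closed part by trading $\mathbb{G}_m$ for $\mathbb{Z}$ via le lemme \ref{2.1}, the map $Z_n\to Z$ being finite \'etale. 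Your route instead encodes the whole statement in the single assertion that the adjoint $u:\mathbb{G}_m\to R\pi^!\mathbb{G}_m$ of the norm is an isomorphism, checked on stalks; the essential local input is that residue degrees are $1$ over a strictly henselian base, so that the norm, which multiplies valuations by the residue degree, induces an isomorphism on the divisor sheaves. This is more conceptual, yields the lemma for arbitrary (not necessarily constructible) $\mathcal{F}_n$, and isolates exactly why ramification is harmless; the price is the $\pi^!$ formalism for finite morphisms (existence, compatibility with \'etale localization, $R(\pi\circ i_s)^!\cong Ri_s^!\circ R\pi^!$) together with the formal identification of $N^r(\mathcal{F}_n)$ with $u\circ(-)$, which you rightly flag as the main bookkeeping. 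Conversely, the paper's d\'evissage needs no such machinery but leaves implicit the dual compatibility check, namely that its chain of isomorphisms really composes to $N^r$ (the norm being the counit over the \'etale locus and multiplication by the residue degree on divisors over the ramification locus). Both arguments are sound, and your parenthetical alternative is essentially the paper's proof.
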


\begin{proof}
Soit $V_n$ un ouvert de $U_n$ sur lequel la restriction de $\pi$ est étale, et notons $j_n: V_n \hookrightarrow U_n$ l'immersion ouverte et $i_n: Z_n = U_n \setminus V_n \hookrightarrow U_n$ l'immersion fermée. Nous allons montrer que $N^r({j_n}_!j_n^*\mathcal{F}_n)$ et $N^r({i_n}_*i_n^*\mathcal{F}_n)$ sont des isomorphismes, ce qui impliquera que $N^r(\mathcal{F}_n)$ est un isomorphisme grâce à la suite exacte $0 \rightarrow {j_n}_!j_n^*\mathcal{F}_n \rightarrow \mathcal{F}_n \rightarrow {i_n}_*i_n^*\mathcal{F}_n \rightarrow 0$. 
\begin{itemize}
\item[$\bullet$] D'une part, comme $\pi \circ j_n$ est étale, on dispose d'isomorphismes: 
$$\text{Ext}^r_{V_{n}}(j_n^*\mathcal{F}_n,\mathbb{G}_m)=\text{Ext}^r_{V_n}(j_n^*\mathcal{F}_n,(\pi \circ j_n)^*\mathbb{G}_m)) \cong \text{Ext}^r_{U}(\pi_*{j_n}_!j_n^*\mathcal{F}_n,\mathbb{G}_m).$$
Or $\text{Ext}^r_{U_n}({j_n}_!j_n^*\mathcal{F}_n,\mathbb{G}_m) \cong \text{Ext}^r_{V_{n}}(j_n^*\mathcal{F}_n,\mathbb{G}_m)$. Cela montre immédiatement que $N^r({j_n}_!j_n^*\mathcal{F}_n)$ est un isomorphisme.
\item[$\bullet$] D'autre part, étant donné le choix de $V_n$, si l'on note $Z = \pi(Z_n)$, on remarque que $Z_n = \pi^{-1}(Z)= Z \times_U U_n$, et donc on dispose d'un diagramme commutatif:
\centerline{\xymatrix{
Z_n \ar[r]^{i_n} \ar[d]^{\pi_Z} & U_n \ar[d]^{\pi}\\
Z \ar[r]^{i} & U
}}
Ainsi, en utilisant le lemme \ref{2.1} et en tenant compte du fait que $\pi_Z$ est fini étale:
\begin{align*}
\text{Ext}^r_{U_n}({i_n}_*i_n^*\mathcal{F}_n,\mathbb{G}_m) &\cong \text{Ext}^{r-1}_{Z_n}(i_n^*\mathcal{F}_n,\mathbb{Z}) \cong \text{Ext}^{r-1}_{Z_n}(i_n^*\mathcal{F}_n,\pi_Z^*\mathbb{Z})\\
&\cong \text{Ext}^{r-1}_{Z}({\pi_Z}_*i_n^*\mathcal{F}_n,\mathbb{Z}) \cong \text{Ext}^r_{U}(\pi_*{i_n}_*i_n^*\mathcal{F}_n,\mathbb{G}_m),
\end{align*}
et donc $N^r({i_n}_*i_n^*\mathcal{F}_n)$ est un isomorphisme.
\end{itemize}
\end{proof}

\begin{proposition}\label{norm}
Soient $\mathcal{F}_n$ un faisceau constructible sur $U_n$ et $r \in \mathbb{Z}$. Alors $\alpha^r(U_n,\mathcal{F}_n)$ est un isomorphisme si, et seulement si, $\alpha^r(U,\pi_*\mathcal{F}_n)$ est un isomorphisme. 
\end{proposition}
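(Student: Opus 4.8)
The statement compares the Artin–Verdier duality pairings on $U_n$ and on $U$ along the finite Galois cover $\pi: U_n \to U$. The natural tool is a compatibility between the pairing on $U_n$ and the pairing on $U$ under $\pi_*$ (which is exact, since $\pi$ is finite) on the Ext-side and under the trace/corestriction on the compactly-supported cohomology side. So the first step is to set up a commutative diagram
\[
\xymatrix{
\mathrm{Ext}^r_{U_n}(\mathcal{F}_n,\mathbb{G}_m) \times H^{3-r}_c(U_n,\mathcal{F}_n) \ar[r] & \mathbb{Q}/\mathbb{Z}\\
\mathrm{Ext}^r_{U}(\pi_*\mathcal{F}_n,\mathbb{G}_m) \times H^{3-r}_c(U,\pi_*\mathcal{F}_n) \ar[u] \ar[r] & \mathbb{Q}/\mathbb{Z} \ar@{=}[u]
}
\]
where the left vertical on the Ext-side is (the inverse of) the isomorphism $N^r(\mathcal{F}_n)$ from the previous lemma, and on the cohomology side one uses that $H^r_c(U,\pi_*\mathcal{F}_n) \cong H^r_c(U_n,\mathcal{F}_n)$. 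This last identification holds because $\pi_*$ is exact and commutes with the extension-by-zero $j_!$ up to the obvious base change (writing $\bar\pi: \mathcal{X}_n \to \mathcal{X}$ for the normalization of $\mathcal{X}$ in $L$, one has $\bar\pi_* j_{n!} = j_! \pi_*$ since the closed fibre over $\mathfrak{m}$ maps to the closed fibre over $\mathfrak{m}$), hence $H^r(X, j_! \pi_* \mathcal{F}_n) = H^r(X, \bar\pi_* j_{n!}\mathcal{F}_n) = H^r(X_n, j_{n!}\mathcal{F}_n)$, the last step again by exactness of $\bar\pi_*$ (a finite morphism).

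**Key steps, in order.** First I would establish the cohomology-side identification $H^r_c(U,\pi_*\mathcal{F}_n) \cong H^r_c(U_n,\mathcal{F}_n)$ by the $j_!$–$\pi_*$ commutation just described, together with the Leray spectral sequence for $\bar\pi$ degenerating (finite morphism). Second, I would check that under this identification and the norm isomorphism $N^r$, the pairing $AV$ on $U_n$ corresponds to the pairing $AV$ on $U$ applied to $\pi_*\mathcal{F}_n$; this is the heart of the matter. The cleanest way is to unwind the definition: $AV$ comes from the Yoneda/cup-product pairing $\mathrm{Ext}^r(\mathcal{F},\mathbb{G}_m) \times H^{3-r}_c(\mathcal{F}) \to H^3_c(\mathbb{G}_m) \cong \mathbb{Q}/\mathbb{Z}$, and one needs that the composite of "$\pi_*$ on Ext followed by the norm $N_{U_n/U}: \pi_*\mathbb{G}_m \to \mathbb{G}_m$" on the first factor, matched with "corestriction/trace $\pi_*$" on the second factor, lands in $H^3_c(U,\mathbb{G}_m) \cong \mathbb{Q}/\mathbb{Z}$ compatibly with $H^3_c(U_n,\mathbb{G}_m) \cong \mathbb{Q}/\mathbb{Z}$. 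This compatibility is a projection-formula statement: $N_{U_n/U}$ is defined precisely so that it is adjoint to the trace map, and the identification of both $H^3_c$'s with $\mathbb{Q}/\mathbb{Z}$ (from Proposition \ref{G_m}(ii), via the residue/sum maps) is compatible with trace because residues are. I would cite, as the analogous statement in the number-field case, the argument around Proposition II.3.8 of \cite{MilADT}. Third, once the diagram commutes, $\alpha^r(U_n,\mathcal{F}_n)$ and $\alpha^r(U,\pi_*\mathcal{F}_n)$ are identified via isomorphisms on all the outer terms, so one is an isomorphism iff the other is.

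**Main obstacle.** The delicate point is verifying that the trace map $H^3_c(U_n,\mathbb{G}_m) \to H^3_c(U,\mathbb{G}_m)$ is an \emph{isomorphism} $\mathbb{Q}/\mathbb{Z} \to \mathbb{Q}/\mathbb{Z}$ (i.e. multiplication by a unit, in fact the identity) and compatible with the chosen trivializations. Concretely: the identification $H^3_c(U,\mathbb{G}_m)\cong\mathbb{Q}/\mathbb{Z}$ was built (proof of Proposition \ref{G_m}(ii)) from the Brauer sequence $0 \to \mathrm{Br}\,K \to \bigoplus_v \mathrm{Br}\,K_v \to \mathbb{Q}/\mathbb{Z} \to 0$ via the sum of local invariants, and one needs to know that passing from $K$ to $L$ the local invariant maps $\mathrm{inv}_w: \mathrm{Br}\,L_w \to \mathbb{Q}/\mathbb{Z}$ and $\mathrm{inv}_v: \mathrm{Br}\,K_v \to \mathbb{Q}/\mathbb{Z}$ satisfy $\mathrm{inv}_v \circ \mathrm{cores} = \mathrm{inv}_w$ (no local degree factor, because here the relevant residue fields are $\hat{\mathbb{Z}}$-type and the "invariant" is a residue, not a true $\mathrm{inv}$ with a local-degree normalization). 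This is exactly the compatibility of residues with corestriction in one-dimensional local class field theory, which I would extract from the appendix to Chapter II of \cite{Ser} or from \cite{Izq1}. Granting that, the remaining bookkeeping — commutativity of cup products with $\pi_*$ and with the norm, and the $j_!$–$\pi_*$ base change — is formal, and the five-lemma-free conclusion (every relevant map in the comparison diagram being already an isomorphism except the two $\alpha^r$'s) finishes the proof.
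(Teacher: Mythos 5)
Your proposal is correct and follows essentially the same route as the paper: identify $H^{3-r}_c(U,\pi_*\mathcal{F}_n)$ with $H^{3-r}_c(U_n,\mathcal{F}_n)$ using exactness of pushforward along the finite morphism, combine with the norm isomorphism $N^r(\mathcal{F}_n)$ of the preceding lemma, and reduce everything to the compatibility of the trace map $H^3_c(U_n,\mathbb{G}_m)\rightarrow H^3_c(U,\mathbb{G}_m)$ with the two identifications with $\mathbb{Q}/\mathbb{Z}$, which the paper proves by a cube diagram exploiting the surjection $\bigoplus_{w}\mathrm{Br}(L_w)\twoheadrightarrow H^3_c(U_n,\mathbb{G}_m)$ and exactly the residue--corestriction compatibility you single out as the main obstacle. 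The only point you gloss over is the case $U=X$, where there is no such surjection from local Brauer groups; the paper handles it by restricting to a strict open $U'\subsetneq X$ and using that $H^3_c(U',\mathbb{G}_m)\rightarrow H^3_c(X,\mathbb{G}_m)$ is an isomorphism, a one-line reduction that your argument would need to include.
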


\begin{proof}
Supposons que $U \neq X$. Comme $\pi$ est fini, on dispose d'un isomorphisme $H^3_c(U,\pi_*\mathbb{G}_m) \rightarrow H^3_c(U_n,\mathbb{G}_m)$. Notons $\text{Nm}_{U_n/U}$ la composée:
$$H^3_c(U_n,\mathbb{G}_m) \rightarrow H^3_c(U,\pi_*\mathbb{G}_m) \rightarrow H^3_c(U,\mathbb{G}_m)$$
le deuxième morphisme étant induit par $N_{U_n/U}$. Ce morphisme s'insère dans un diagramme cubique:\\
\centerline{\xymatrix{
& \mathbb{Q}/\mathbb{Z} \ar@{=}[rr] \ar@{=}'[d][dd] & & \mathbb{Q}/\mathbb{Z} \ar@{=}[dd] \\
\bigoplus_{w \in X_n \setminus U_n} \text{Br}(L_w) \ar[ru] \ar@{->>}[rr] \ar[dd]^{\text{Cores}} & & H^3_c(U_n,\mathbb{G}_m) \ar[ru]\ar[dd]& \\
& \mathbb{Q}/\mathbb{Z}  \ar@{=}'[r][rr] & & \mathbb{Q}/\mathbb{Z}  \\
\bigoplus_{v \in X \setminus U} \text{Br}(K_v) \ar[ru]\ar@{->>}[rr] & & H^3_c(U,\mathbb{G}_m)\ar[ru] & 
}}
où $X_n$ est la normalisation de $X$ dans $L$ et toutes les faces du cube, à l'exception de la face droite, sont commutatives. Cela entraîne que la face droite est aussi commutative. On en déduit un diagramme commutatif:\\
\centerline{\xymatrix{
\text{Ext}^r_{U_n}(\mathcal{F}_n,\mathbb{G}_m) \ar[d]_{\cong}^{N^r(\mathcal{F}_n)} \ar@{}[r]|{\times} & H^{3-r}_c(U_n,\mathcal{F}_n) \ar[r] & H^3_c(U_n,\mathbb{G}_m) \ar[r] \ar[d]_{\cong}^{\text{Nm}_{U_n/U}} & \mathbb{Q}/\mathbb{Z} \ar@{=}[d]\\
\text{Ext}^r_{U}(\pi_*\mathcal{F}_n,\mathbb{G}_m) \ar@{}[r]|{\times} &  H^{3-r}_c(U,\pi_*\mathcal{F}_n) \ar[r] \ar[u]_{\cong} & H^3_c(U,\mathbb{G}_m) \ar[r] & \mathbb{Q}/\mathbb{Z}
}}
et donc $\alpha^r(U_n,\mathcal{F}_n)$ est un isomorphisme si, et seulement si, $\alpha^r(U,\pi_*\mathcal{F}_n)$ est un isomorphisme.\\
Si $U = X$, on choisit $U'$ un ouvert strict de $X$. On a alors un diagramme cubique:
\centerline{\xymatrix{
& \mathbb{Q}/\mathbb{Z} \ar@{=}[rr] \ar@{=}'[d][dd] & & \mathbb{Q}/\mathbb{Z} \ar@{=}[dd] \\
H^3_c(U'_n,\mathbb{G}_m) \ar[ru] \ar[rr]^{\;\;\;\; \cong} \ar[dd]^{\text{Nm}_{U'_n/U'}} & & H^3_c(U_n,\mathbb{G}_m) \ar[ru]\ar[dd]& \\
& \mathbb{Q}/\mathbb{Z}  \ar@{=}'[r][rr] & & \mathbb{Q}/\mathbb{Z}  \\
H^3_c(U',\mathbb{G}_m) \ar[ru]\ar[rr]^{\cong} & & H^3_c(U,\mathbb{G}_m)\ar[ru] & 
}}
où toutes les faces, à l'exception de la face droite, sont commutatives. On en déduit la commutativité de la face droite. Il est maintenant possible de conclure exactement de la même manière que sous l'hypothèse $U \neq X$.

\end{proof}

\subsection{Propriété d'hérédité}

\begin{proposition} \label{rec}
Soit $r_0 \leq 3$ un entier. Supposons que $\alpha^r(X,\mathcal{F})$ soit un isomorphisme pour toute extension finie $K$ de $\mathbb{C}((x,t))$, pour tout faisceau constructible $\mathcal{F}$ sur $X$ et pour tout $r > r_0$. 
\begin{itemize}
\item[(i)] Supposons $r_0 \neq 3$. Pour toute extension finie $K$ de $\mathbb{C}((x,t))$ et pour tout faisceau constructible $\mathcal{F}$ sur $X$, le morphisme $\alpha^{r_0}(X,\mathcal{F})$ est surjectif.
\item[(ii)] On ne suppose plus que $r_0 \neq 3$. On suppose par contre que pour toute extension finie $K$ de $\mathbb{C}((x,t))$ et pour tout $m >0$, $\alpha^{r_0}(X,\mathbb{Z}/m\mathbb{Z})$ est un isomorphisme. Alors pour toute extension finie $K$ de $\mathbb{C}((x,t))$ et pour tout faisceau constructible $\mathcal{F}$ sur $X$, le morphisme $\alpha^{r_0}(X,\mathcal{F})$ est un isomorphisme.
\end{itemize} 
\end{proposition}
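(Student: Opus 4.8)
The proof runs a \emph{d\'evissage} that reduces every constructible sheaf to the constant sheaves $\mathbb{Z}/m\mathbb{Z}$ by means of the decomposition of Proposition \ref{dec}, with the building blocks handled by the (unconditional) Propositions \ref{suppfer} and \ref{norm}, and with the four and five lemmas doing the gluing. Fix a finite extension $K$ of $\mathbb{C}((x,t))$, write $X=\mathcal{X}\setminus\{\mathfrak{m}\}$, and let $\mathcal{F}$ be a constructible sheaf on $X$. By Proposition \ref{dec} there are a finite Galois extension $L/K$, with $\pi\colon X_n\to X$ the normalisation of $X$ in $L$, a constant $m$-torsion sheaf $F_n$ on $X_n$, a constructible sheaf $F_f$ on $X$ of finite support, and a short exact sequence $0\to\mathcal{F}\to\mathcal{G}\to\mathcal{Q}\to 0$ with $\mathcal{G}=\pi_*F_n\oplus F_f$. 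The long exact sequence of $\text{Ext}^\bullet_X(-,\mathbb{G}_m)$ and the dual of that of $H^\bullet_c(X,-)$, together with the maps $\alpha^\bullet$, form a commutative ladder with exact rows. Since $\alpha^\bullet$ is additive, $\alpha^r(X,\mathcal{G})\cong\alpha^r(X,\pi_*F_n)\oplus\alpha^r(X,F_f)$: the second summand is an isomorphism for all $r$ by Proposition \ref{suppfer}, and by Proposition \ref{norm} the first is an isomorphism (resp. surjective, resp. injective) if and only if $\alpha^r(X_n,F_n)$ is; as $F_n$ is a finite sum of constant sheaves $\mathbb{Z}/m'\mathbb{Z}$, everything comes down to $\alpha^\bullet$ on the sheaves $\mathbb{Z}/m\mathbb{Z}$ over finite extensions of $\mathbb{C}((x,t))$.

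\textbf{Part (i).} Assume $r_0\neq 3$. The cases $r_0<0$ are trivial ($\text{Ext}^{r_0}_X(-,\mathbb{G}_m)=0$ and $H^{3-r_0}_c(X,-)=0$ by Corollary \ref{Hc=0}). For $0\le r_0\le 2$, the four lemma applied to the ladder above — where $\alpha^{r_0+1}(X,\mathcal{Q})$ and $\alpha^{r_0+1}(X,\mathcal{G})$ are isomorphisms by the standing hypothesis — shows that $\alpha^{r_0}(X,\mathcal{F})$ is surjective as soon as $\alpha^{r_0}(X,\mathcal{G})$ is; by the previous paragraph this is reduced to the surjectivity of $\alpha^{r_0}(X,\mathbb{Z}/m\mathbb{Z})$ over all finite extensions. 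Now $\underline{\text{Hom}}_X(\mathbb{Z}/m\mathbb{Z},\mathbb{G}_m)=\mu_m$ with $\underline{\text{Ext}}^{\ge 1}_X(\mathbb{Z}/m\mathbb{Z},\mathbb{G}_m)=0$, and likewise $\underline{\text{Hom}}_X(\mu_m,\mathbb{G}_m)=\mathbb{Z}/m\mathbb{Z}$ with $\underline{\text{Ext}}^{\ge 1}_X(\mu_m,\mathbb{G}_m)=0$ (the relevant local unit groups are divisible since $k$ has characteristic $0$), so $\text{Ext}^r_X(\mathbb{Z}/m\mathbb{Z},\mathbb{G}_m)\cong H^r(X,\mu_m)$, $\text{Ext}^r_X(\mu_m,\mathbb{G}_m)\cong H^r(X,\mathbb{Z}/m\mathbb{Z})$, and the pairings $AV$ for $\mathbb{Z}/m\mathbb{Z}$ and for $\mu_m$ are mutually transpose. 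All these groups being finite (Proposition \ref{finitude}), $\alpha^{r_0}(X,\mathbb{Z}/m\mathbb{Z})$ is surjective exactly when $\alpha^{3-r_0}(X,\mu_m)$ is injective. For $r_0\in\{0,1\}$ one has $3-r_0>r_0$, so $\alpha^{3-r_0}(X,\mu_m)$ is an isomorphism by hypothesis; for $r_0=2$ one must prove $\alpha^{1}(X,\mu_m)$ injective \emph{by hand}, using the explicit descriptions of $\mathcal{O}_K^{\times}$, $\text{Pic}\,X$, $\text{Br}\,X$ and $H^3(X,\mathbb{G}_m)$ from Proposition \ref{G_m} (whence $H^1(X,\mu_m)\cong {}_m\text{Pic}\,X$ has the same order as $H^2(X,\mu_m)$ by Proposition \ref{G_m}(iv)) and analysing the cup product, for instance by transporting the duality down from the regular model $\tilde{\mathcal{X}}$ and its exceptional curve $Y$.

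\textbf{Part (ii).} The additional hypothesis makes $\alpha^{r_0}(X,\mathbb{Z}/m\mathbb{Z})$ an isomorphism for all finite $K$ and all $m$, hence, by the first paragraph, $\alpha^{r_0}(X,\mathcal{G})$ an isomorphism for every sheaf $\mathcal{G}=\pi_*F_n\oplus F_f$ of the stated form. For an arbitrary $\mathcal{F}$ with associated sequence $0\to\mathcal{F}\to\mathcal{G}\to\mathcal{Q}\to 0$, the ladder then carries isomorphisms at $\alpha^{r_0}(X,\mathcal{G})$, $\alpha^{r_0+1}(X,\mathcal{Q})$ and $\alpha^{r_0+1}(X,\mathcal{G})$, so the sharp five lemma gives that $\alpha^{r_0}(X,\mathcal{F})$ is an isomorphism provided $\alpha^{r_0}(X,\mathcal{Q})$ is surjective. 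If $r_0\neq 3$, this surjectivity is precisely part (i) (whose hypothesis is the standing one). If $r_0=3$, one uses instead that $\text{Ext}^4_X(-,\mathbb{G}_m)=0$ (Lemma \ref{Ext=0}): for any embedding of a constructible sheaf into one of the form $\mathcal{G}$, the induced map on $\text{Ext}^3$ is surjective, which together with $\alpha^3(X,\mathcal{G})$ being an isomorphism makes $\alpha^3(X,-)$ surjective on \emph{every} constructible sheaf, in particular on $\mathcal{Q}$, and the five lemma concludes.

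\textbf{The main obstacle.} The two d\'evissages and the reductions through Propositions \ref{dec}, \ref{suppfer} and \ref{norm} are formal homological algebra. The genuine work is the single case $r_0=2$ of part (i) — the surjectivity of $\alpha^{2}(X,\mathbb{Z}/m\mathbb{Z})$, equivalently the injectivity of $\alpha^{1}(X,\mu_m)$ — where the inductive hypothesis is silent and the arithmetic of $X$, encoded in Proposition \ref{G_m} and ultimately in the geometry of $\tilde{\mathcal{X}}$ and of the curve $Y$, must be exploited directly.
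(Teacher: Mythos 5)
Your part (ii) is essentially the paper's argument (decomposition via Proposition \ref{dec}, Propositions \ref{suppfer} and \ref{norm} to handle $\pi_*F_n\oplus F_f$, then four- and five-lemma chases), but your part (i) takes a different route and contains a genuine, unfilled gap. The statement of (i) assumes only that $\alpha^r$ is an isomorphism for $r>r_0$; by reducing to constant sheaves and then dualising ($\alpha^{r_0}(X,\mathbb{Z}/m\mathbb{Z})$ surjective $\Leftrightarrow$ $\alpha^{3-r_0}(X,\mu_m)$ injective), you land, for $r_0=2$, on the injectivity of $\alpha^{1}(X,\mu_m)$ --- a statement about a degree \emph{below} $r_0$, about which the inductive hypothesis says nothing. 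You acknowledge this and propose to prove it ``by hand'' via Proposition \ref{G_m} and a cup-product analysis, but no such argument is given, and the cardinality identity $|{_m}\mathrm{Pic}\,X|/|\mathrm{Pic}\,X/m|=m^{n_X}$ cannot yield injectivity on its own: in the paper that identity is used \emph{after} surjectivity is known, precisely to upgrade a surjection between finite groups of equal order to a bijection (step C of the final proof). Proving injectivity of $\alpha^1(X,\mu_m)$ directly would amount to doing the hard part of the Artin--Verdier theorem at the wrong moment, which defeats the purpose of the induction. (Your transposition of the two pairings under $\mathrm{Ext}^r_X(\mathbb{Z}/m\mathbb{Z},\mathbb{G}_m)\cong H^r(X,\mu_m)$ would also need justification, but that is a minor point by comparison.)

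The paper's proof of (i) is purely formal and makes no reduction to $\mathbb{Z}/m\mathbb{Z}$ at all: given $c\in H^{3-r_0}(X,\mathcal{F})$, embed $\mathcal{F}$ into a flasque torsion sheaf $\mathcal{I}$, write $\mathcal{I}$ as the filtered colimit of its constructible subsheaves, and use $H^{3-r_0}(X,\mathcal{I})=0$ (this is where $r_0\neq 3$ enters, to ensure $3-r_0\geq 1$) to find a constructible $\mathcal{F}_0\supseteq\mathcal{F}$ in which $c$ dies. A diagram chase in the ladder for $0\to\mathcal{F}\to\mathcal{F}_0\to\mathcal{F}_1\to 0$, using the hypothesis only at level $r_0+1$, then shows $(\alpha^{r_0}(X,\mathcal{F}))^D$ is injective, i.e. $\alpha^{r_0}(X,\mathcal{F})$ is surjective, uniformly in $r_0\leq 2$ and for every constructible $\mathcal{F}$. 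I recommend you replace your part (i) by this flasque-embedding argument; your part (ii) then goes through (and in fact does not need part (i) at all: the same four-lemma chase in the ladder for $0\to\mathcal{F}\to\mathcal{F}_0\to\mathcal{F}_1\to 0$, with $\alpha^{r_0}(X,\mathcal{F}_0)$ an isomorphism by the additional hypothesis, already gives surjectivity of $\alpha^{r_0}(X,\mathcal{F})$ for every constructible $\mathcal{F}$, hence for $\mathcal{F}_1$, after which the five lemma concludes --- this also handles $r_0=3$ without a separate case).
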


\begin{proof}
\begin{itemize}
\item[(i)] Soient $K$ une extension finie de $\mathbb{C}((x,t))$ et $\mathcal{F}$ un faisceau constructible sur $X$. Fixons $c \in H^{3-r_0}(X,\mathcal{F})$. Soit $\mathcal{I}$ un faisceau flasque de torsion muni d'une injection $\mathcal{F} \hookrightarrow \mathcal{I}$. Le faisceau $\mathcal{I}$ est la limite inductive de ses sous-faisceaux constructibles, parmi lesquels se trouve $\mathcal{F}$. Par conséquent, comme $H^{3-r_0}(X, \mathcal{I})=0$, il existe un sous-faisceau constructible $\mathcal{F}_0$ de $\mathcal{I}$ contenant $\mathcal{F}$ et tel que l'image de $c$ dans $H^{3-r_0}(X,\mathcal{F}_0)$ est nulle. Soit $\mathcal{F}_1$ le conoyau de l'injection $\mathcal{F} \hookrightarrow \mathcal{F}_0$. Nous obtenons alors un diagramme commutatif à lignes exactes:\\
\footnotesize{
\xymatrix{
 (\text{Ext}^{r_0+1}_X(\mathcal{F}_0,\mathbb{G}_m))^D \ar[r]  & (\text{Ext}^{r_0+1}_X(\mathcal{F}_1,\mathbb{G}_m))^D\ar[r]  & (\text{Ext}^{r_0}_X(\mathcal{F},\mathbb{G}_m))^D\ar[r]  & (\text{Ext}^{r_0}_X(\mathcal{F}_0,\mathbb{G}_m))^D   \\
 H^{2-r_0}(X,\mathcal{F}_0)\ar[u]^{\cong}_{(\alpha^{r_0+1}(X,\mathcal{F}_0))^D} \ar[r] & H^{2-r_0}(X,\mathcal{F}_1) \ar[r]\ar[u]^{\cong}_{(\alpha^{r_0+1}(X,\mathcal{F}_1))^D} & H^{3-r_0}(X,\mathcal{F}) \ar[r]\ar[u]_{(\alpha^{r_0}(X,\mathcal{F}))^D} & H^{3-r_0}(X,\mathcal{F}_0) \ar[u]_{(\alpha^{r_0}(X,\mathcal{F}_0))^D}
}}

\normalsize
Ainsi, si $(\alpha^{r_0}(X,\mathcal{F}))^D(c)=0$, alors $c = 0$. On en déduit que $(\alpha^{r_0}(X,\mathcal{F}))^D$ est injectif, d'où le résultat.

\item[(ii)] On reprend les mêmes notations qu'au début de (i). Soit $m>0$ tel que $m\mathcal{F}=0$. Comme $\mathcal{F}$ est constructible, on peut trouver un ouvert non vide $V$ de $X$ et un recouvrement étale fini connexe $V_n \rightarrow V$ tel que $\mathcal{F}|_{V_n}$ est constant. On remarque alors que $V_n$ est la normalisation de $V$ dans une extension finie $L$ de $K$. Notons $\pi_X: X_n \rightarrow X$ la normalisation de $X$ dans $L$. D'après la proposition \ref{dec}, il existe alors un faisceau $\mathcal{F}_n$ constructible constant sur $X_n$ de $m$-torsion, un faisceau $\mathcal{F}_f$ constructible sur $X$ à support fini et un morphisme injectif $\mathcal{F} \hookrightarrow {\pi_X}_*\mathcal{F}_n \oplus \mathcal{F}_f$. On note $\mathcal{F}_0 = {\pi_X}_*\mathcal{F}_n \oplus \mathcal{F}_f$. D'après la proposition \ref{suppfer}, on sait que $\alpha^r(X, \mathcal{F}_f)$ est un isomorphisme pour tout $r$. De plus, comme par hypothèse $\alpha^{r_0}(X_n,\mathbb{Z}/m'\mathbb{Z})$ est un isomorphisme pour $m'|m$, il en est de même de $\alpha^{r_0}(X_n,\mathcal{F}_n)$: la proposition \ref{norm} entraîne alors que $\alpha^{r_0}(X,{\pi_X}_*\mathcal{F}_n)$ est un isomorphisme. Par conséquent, $\alpha^{r_0}(X,\mathcal{F}_0)$ est un isomorphisme.\\
Notons $\mathcal{F}_1$ le conoyau de l'injection $\mathcal{F} \hookrightarrow \mathcal{F}_0$. Nous obtenons alors un diagramme commutatif à lignes exactes:\\

\scriptsize{
\xymatrix{
\text{Ext}^{r_0+1}_X(\mathcal{F}_0,\mathbb{G}_m) \ar[d]^{\cong}_{\alpha^{r_0+1}(X,\mathcal{F}_0)}  &\text{Ext}^{r_0+1}_X(\mathcal{F}_1,\mathbb{G}_m) \ar[l] \ar[d]^{\cong}_{\alpha^{r_0+1}(X,\mathcal{F}_1)}  & \text{Ext}^{r_0}_X(\mathcal{F},\mathbb{G}_m) \ar[l] \ar[d]_{\alpha^{r_0}(X,\mathcal{F})} & \text{Ext}^{r_0}_X(\mathcal{F}_0,\mathbb{G}_m) \ar[l]  \ar[d]_{\alpha^{r_0}(X,\mathcal{F}_0)}^{\cong} & \text{Ext}^{r_0}_X(\mathcal{F}_1,\mathbb{G}_m) \ar[l]  \ar[d]_{\alpha^{r_0}(X,\mathcal{F}_1)} \\
H^{2-r_0}(X,\mathcal{F}_0)^* &H^{2-r_0}(X,\mathcal{F}_1)^* \ar[l]& H^{3-r_0}(X,\mathcal{F})^* \ar[l]& H^{3-r_0}(X,\mathcal{F}_0)^*  \ar[l] & H^{3-r_0}(X,\mathcal{F}_1)^*  \ar[l]
}}
\normalsize
Par chasse au diagramme, $\alpha^{r_0}(X, \mathcal{F})$ est surjectif. Cela étant vrai pour tout faisceau constructible $\mathcal{F}$, $\alpha^{r_0}(X, \mathcal{F}_1)$ est aussi surjectif. Une nouvelle chasse au diagramme permet de conclure que $\alpha^{r_0}(X,\mathcal{F})$ est un isomorphisme.
\end{itemize}
\end{proof}

\subsection{Preuve de la dualité d'Artin-Verdier}\label{preuveAV}

Nous sommes à présent en mesure d'établir le théorème d'Artin-Verdier pour $K_0$. Pour ce faire, on prouve:

\begin{proposition}
Pour toute extension finie $K$ de $K_0$, pour tout faisceau constructible $F$ sur $X$ et pour tout $r_0 \in \mathbb{Z}$, le morphisme $\alpha^{r_0}(X,F)$ est un isomorphisme.
\end{proposition}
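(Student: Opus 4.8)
The plan is to prove the statement by descending induction on $r_0$, using Proposition~\ref{rec} to reduce at each stage to the single sheaf $\mathbb{Z}/m\mathbb{Z}$. The extreme degrees are free: for $r_0\geq 4$ one has $\text{Ext}^{r_0}_X(F,\mathbb{G}_m)=0$ by Lemma~\ref{Ext=0} and $H^{3-r_0}_c(X,F)=0$ by convention, while for $r_0\leq -1$ one has $\text{Ext}^{r_0}_X(F,\mathbb{G}_m)=0$ by convention and $H^{3-r_0}_c(X,F)=0$ by Corollary~\ref{Hc=0}; so $\alpha^{r_0}(X,F)$ is trivially an isomorphism in those ranges, for every finite extension $K/K_0$ and every constructible $F$. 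It remains to treat $r_0\in\{3,2,1,0\}$. Since $U=X$ here we have $H^{\bullet}_c(X,-)=H^{\bullet}(X,-)$, and since all the groups in sight are finite (Proposition~\ref{finitude}) it suffices, at each such $r_0$, to show that $\alpha^{r_0}(X,\mathbb{Z}/m\mathbb{Z})$ is an isomorphism for every finite extension $K/K_0$ and every $m>0$: by Proposition~\ref{rec}(ii) this automatically upgrades to an isomorphism $\alpha^{r_0}(X,F)$ for all constructible $F$ (and all finite extensions), so the descending induction closes up.

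First I would settle the seed $r_0=3$ directly, since Proposition~\ref{rec}(i) cannot be used there. Applying $\text{Ext}^{\bullet}_X(-,\mathbb{G}_m)$ to $0\to\mathbb{Z}\xrightarrow{\,m\,}\mathbb{Z}\xrightarrow{\,q\,}\mathbb{Z}/m\mathbb{Z}\to 0$ gives a short exact sequence $0\to H^2(X,\mathbb{G}_m)/m\to\text{Ext}^3_X(\mathbb{Z}/m\mathbb{Z},\mathbb{G}_m)\xrightarrow{\,q^{*}\,}{}_m H^3(X,\mathbb{G}_m)\to 0$. By Proposition~\ref{G_m}(iii) the group $H^2(X,\mathbb{G}_m)=\text{Br}\,X\cong(\mathbb{Q}/\mathbb{Z})^{n_X}$ is divisible, so $q^{*}$ is an isomorphism of $\text{Ext}^3_X(\mathbb{Z}/m\mathbb{Z},\mathbb{G}_m)$ onto ${}_m H^3(X,\mathbb{G}_m)={}_m(\mathbb{Q}/\mathbb{Z})$ (using $H^3(X,\mathbb{G}_m)\cong\mathbb{Q}/\mathbb{Z}$ from Proposition~\ref{G_m}(ii)). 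Since $X$ is integral, $H^0(X,\mathbb{Z}/m\mathbb{Z})\cong\mathbb{Z}/m\mathbb{Z}$ is generated by the class of $q\colon\mathbb{Z}\to\mathbb{Z}/m\mathbb{Z}$, and tracing through the definition of $AV$ one sees $AV(e,q)=q^{*}(e)$; hence $\alpha^3(X,\mathbb{Z}/m\mathbb{Z})$ is identified with the isomorphism $\text{Ext}^3_X(\mathbb{Z}/m\mathbb{Z},\mathbb{G}_m)\xrightarrow{\ \sim\ }{}_m(\mathbb{Q}/\mathbb{Z})=\text{Hom}(\mathbb{Z}/m\mathbb{Z},\mathbb{Q}/\mathbb{Z})=H^0(X,\mathbb{Z}/m\mathbb{Z})^D$. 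This holds over every finite extension, so Proposition~\ref{rec}(ii) gives the statement for $r_0=3$.

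For $r_0\in\{2,1,0\}$ I would descend from $r_0+1$. Granting that $\alpha^r(X,F)$ is an isomorphism for all $r>r_0$, all finite extensions and all constructible $F$, Proposition~\ref{rec}(i) (applicable because $r_0\neq 3$) shows $\alpha^{r_0}(X,F)$ is surjective for every constructible $F$; a surjection between finite abelian groups of equal order being an isomorphism, it then suffices to match cardinalities for $F=\mathbb{Z}/m\mathbb{Z}$. The long exact sequence attached to $0\to\mathbb{Z}\xrightarrow{\,m\,}\mathbb{Z}\to\mathbb{Z}/m\mathbb{Z}\to 0$ has the same outer terms, $H^{r_0-1}(X,\mathbb{G}_m)/m$ and ${}_m H^{r_0}(X,\mathbb{G}_m)$, as the Kummer sequence, so $|\text{Ext}^{r_0}_X(\mathbb{Z}/m\mathbb{Z},\mathbb{G}_m)|=|H^{r_0}(X,\mu_m)|$; and since $k$ is algebraically closed of characteristic $0$ the sheaf $\mu_m$ is constant on $X$, so $\alpha^{r_0}(X,\mathbb{Z}/m\mathbb{Z})$ is a surjection between finite groups of orders $|H^{r_0}(X,\mu_m)|$ and $|H^{3-r_0}(X,\mu_m)|$. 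It thus remains to check $|H^{r_0}(X,\mu_m)|=|H^{3-r_0}(X,\mu_m)|$. The Kummer sequence together with the divisibility of $\mathcal{O}_X(X)^{\times}=\mathcal{O}_K^{\times}$ and of $\text{Br}\,X$ and the isomorphism $H^3(X,\mathbb{G}_m)\cong\mathbb{Q}/\mathbb{Z}$ (all established in the proof of Proposition~\ref{G_m}) gives $|H^0(X,\mu_m)|=|H^3(X,\mu_m)|=m$, $|H^1(X,\mu_m)|=|{}_m\text{Pic}\,X|$ and $|H^2(X,\mu_m)|=m^{n_X}\,|\text{Pic}\,X/m|$; the identity $|{}_m\text{Pic}\,X|=m^{n_X}\,|\text{Pic}\,X/m|$ of Proposition~\ref{G_m}(iv) then furnishes exactly the needed equalities $|H^0|=|H^3|$ and $|H^1|=|H^2|$. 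Hence $\alpha^{r_0}(X,\mathbb{Z}/m\mathbb{Z})$ is an isomorphism, and Proposition~\ref{rec}(ii) concludes for $r_0$. Running this from $r_0=2$ down to $r_0=0$ finishes the induction, hence the proposition; combined with Proposition~\ref{change ouvert} it also gives Theorem~\ref{AVdim2} for arbitrary open $U$.

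The genuinely hard work has already been invested in the preparatory results: above all Proposition~\ref{G_m}, which computes $H^3_c$, $\text{Br}\,X$ and the Euler-characteristic-type identity for $\text{Pic}\,X$ by resolving the singularities of $\mathcal{X}$, and the hereditary Proposition~\ref{rec}. Within the present argument the one delicate point is the seed $r_0=3$, where Proposition~\ref{rec}(i) yields nothing (it produces surjectivity only for $r_0\neq 3$) and one must instead recognise the $\mathbb{Z}/m\mathbb{Z}$-pairing explicitly; once that base case is secured, the remaining degrees drop out formally, from surjectivity plus the counting powered by Proposition~\ref{G_m}(iv).
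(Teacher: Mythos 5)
Your proposal is correct and follows essentially the same route as the paper: descending induction on $r_0$, the extreme degrees killed by Lemma~\ref{Ext=0} and Corollaire~\ref{Hc=0}, the seed case $r_0=3$ settled by the explicit identification of the pairing for $\mathbb{Z}/m\mathbb{Z}$ using the divisibility of $\mathrm{Br}\,X$, and the cases $r_0\in\{2,1,0\}$ obtained from Proposition~\ref{rec}(i) plus the cardinality count powered by Proposition~\ref{G_m}(iv), with Proposition~\ref{rec}(ii) upgrading to arbitrary constructible sheaves at each stage. Your only departure is cosmetic: you treat the three middle degrees uniformly by observing that the $\mathrm{Ext}$ sequence and the Kummer sequence share the same outer terms, whereas the paper writes out each case separately.
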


\begin{proof} On procède par étapes:
\begin{itemize} 
\item[\textbf{A.}] \underline{Pour $r_0 >3$:} \\
Le lemme \ref{Ext=0} entraîne immédiatement que $\alpha^{r_0}(X, F)$ est un isomorphisme.
\item[\textbf{B.}] \underline{Pour $r_0 =3$:}\\
Soit $K$ une extension finie de $K_0$. Soit $m >0$ un entier. La suite exacte courte de faisceaux $ 0 \rightarrow \mathbb{Z} \rightarrow \mathbb{Z} \rightarrow \mathbb{Z}/m\mathbb{Z} \rightarrow 0$ induit une suite exacte longue de cohomologie:
\begin{equation}\label{suiteext}
... \rightarrow \text{Ext}^r_X(\mathbb{Z}/m\mathbb{Z}, \mathbb{G}_m) \rightarrow H^r(X, \mathbb{G}_m) \rightarrow H^r(X,\mathbb{G}_m) \rightarrow ... \tag{$\ddagger$}
\end{equation}
Comme $\text{Br}\; X$ est divisible et $H^3(X, \mathbb{G}_m) = \mathbb{Q}/\mathbb{Z}$ d'après le lemme \ref{G_m}, on déduit que $\text{Ext}^3_X(\mathbb{Z}/m\mathbb{Z}, \mathbb{G}_m)\cong \frac{1}{m}\mathbb{Z}/\mathbb{Z}$. D'autre part, $H^0(X, \mathbb{Z}/m\mathbb{Z}) = \mathbb{Z}/m\mathbb{Z}$. Vu à travers ces isomorphismes, l'accouplement d'Artin-Verdier devient $(x, y) \mapsto x \tilde{y}$ pour $x \in \frac{1}{m}\mathbb{Z}/\mathbb{Z}$, $y \in \mathbb{Z}/m\mathbb{Z}$ et $\tilde{y} \in \mathbb{Z}$ un relèvement quelconque de $y$. Cela impose que $\alpha^{3}(X, \mathbb{Z}/m\mathbb{Z})$ est un isomorphisme. La proposition \ref{rec}(ii) entraîne alors immédiatement que $\alpha^{3}(X,F)$ est un isomorphisme pour tout faisceau constructible $F$ sur $X$.
\item[\textbf{C.}] \underline{Pour $r_0 =2$:}\\
Soit $K$ une extension finie de $K_0$. Soit $m >0$ un entier. D'après la proposition \ref{rec}(i), $\alpha^2(X, \mathbb{Z}/m\mathbb{Z})$ est surjectif. Pour voir que c'est un isomorphisme, il suffit de montrer que $H^1(X, \mathbb{Z}/m\mathbb{Z})$ et $\text{Ext}^2_X(\mathbb{Z}/m\mathbb{Z}, \mathbb{G}_m)$ ont même cardinal.\\
La suite exacte (\ref{suiteext}) impose que $$|\text{Ext}^2_X(\mathbb{Z}/m\mathbb{Z}, \mathbb{G}_m)| = |\text{Pic}(X)/m\text{Pic}(X)| |{_m}\text{Br}(X)|.$$ D'autre part, la suite exacte de Kummer montre que:
$$|H^1(X, \mathbb{Z}/m\mathbb{Z})| = |\mathcal{O}_K^{\times}/\mathcal{O}_K^{\times, m}| |_{m}\text{Pic}(X)|= |_{m}\text{Pic}(X)|$$
car $\mathcal{O}_K$ est divisible. Donc, en utilisant le lemme \ref{G_m}:
$$\frac{|H^1(X, \mathbb{Z}/m\mathbb{Z})|}{|\text{Ext}^2_X(\mathbb{Z}/m\mathbb{Z}, \mathbb{G}_m)|} = \frac{|_{m}\text{Pic}(X)|}{|\text{Pic}(X)/m\text{Pic}(X)|} \cdot |{_m}\text{Br}(X)|^{-1}=\frac{m^{n_X}}{m^{n_X}}=1.$$
Par conséquent, $\alpha^2(X,\mathbb{Z}/m\mathbb{Z})$ est un isomorphisme. La proposition \ref{rec}(ii) permet alors de déduire que $\alpha^2(X, F)$ est un isomorphisme pour tout faisceau constructible $F$ sur $X$.
\item[\textbf{D.}] \underline{Pour $r_0 =1$:}\\
Soit $K$ une extension finie de $K_0$. Soit $m >0$ un entier. D'après la proposition \ref{rec}(i), $\alpha^1(X, \mathbb{Z}/m\mathbb{Z})$ est surjective. Pour voir que c'est un isomorphisme, il suffit de montrer que $H^2(X, \mathbb{Z}/m\mathbb{Z})$ et $\text{Ext}^1_X(\mathbb{Z}/m\mathbb{Z}, \mathbb{G}_m)$ ont même cardinal.\\
La suite exacte (\ref{suiteext}) impose que $$|\text{Ext}^1_X(\mathbb{Z}/m\mathbb{Z}, \mathbb{G}_m)| = |\mathcal{O}_K^{\times}/\mathcal{O}_K^{\times, m}||{_m}\text{Pic}(X)| = |{_m}\text{Pic}(X)|.$$ D'autre part, la suite exacte de Kummer montre que:
$$|H^2(X, \mathbb{Z}/m\mathbb{Z})| = |\text{Pic}(X)/m\text{Pic}(X)| |_{m}\text{Br}(X)|.$$
Donc, en utilisant le lemme \ref{G_m}:
$$\frac{|H^2(X, \mathbb{Z}/m\mathbb{Z})|}{|\text{Ext}^1_X(\mathbb{Z}/m\mathbb{Z}, \mathbb{G}_m)|} = \frac{|\text{Pic}(X)/m\text{Pic}(X)|}{|_{m}\text{Pic}(X)|} \cdot |{_m}\text{Br}(X)|=\frac{m^{n_X}}{m^{n_X}}=1.$$
Par conséquent, $\alpha^1(X,\mathbb{Z}/m\mathbb{Z})$ est un isomorphisme. La proposition \ref{rec}(ii) permet alors de déduire que $\alpha^1(X, F)$ est un isomorphisme pour tout faisceau constructible $F$ sur $X$.
\item[\textbf{E.}] \underline{Pour $r_0 =0$:}\\
Soit $K$ une extension finie de $K_0$. Soit $m >0$ un entier. 
D'après la proposition \ref{rec}(i), $\alpha^0(X, \mathbb{Z}/m\mathbb{Z})$ est surjective. Comme $\text{Br} \; X$ est divisible et $H^3(X,\mathbb{G}_m) \cong \mathbb{Q}/\mathbb{Z}$ d'après le lemme \ref{G_m}, la suite de Kummer montre que:
$$|H^3(X, \mathbb{Z}/m\mathbb{Z})|=m= |\text{Hom}_X(\mathbb{Z}/m\mathbb{Z}, \mathbb{G}_m)|.$$
Par conséquent, $\alpha^0(X,\mathbb{Z}/m\mathbb{Z})$ est un isomorphisme. La proposition \ref{rec}(ii) permet alors de déduire que $\alpha^0(X, F)$ est un isomorphisme pour tout faisceau constructible $F$ sur $X$.
\item[\textbf{F.}] \underline{Pour $r_0 <0$:}\\
Le corollaire \ref{Hc=0} entraîne immédiatement que $\alpha^{r_0}(X, F)$ est isomorphisme puisque $H^{3-r_0}_c(X,F)$ est nul.
\end{itemize}
\end{proof}

Les propositions \ref{change ouvert} et \ref{finitude} permettent alors d'obtenir le théorème \ref{AVdim2}. On en déduit le corollaire:
\\~
\begin{corollary}\label{AVdim2bis}
Soit $F$ un faisceau constructible localement constant sur $U$. Notons $F' = \underline{Hom}_U(F, \mathbb{G}_m)$. Il existe alors un accouplement parfait de groupes finis:
$$H^r(U,F') \times H^{3-r}_c(U,F) \rightarrow H^3_c(U,\mathbb{G}_m) = \mathbb{Q}/\mathbb{Z}.$$
\end{corollary}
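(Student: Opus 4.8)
The plan is to deduce this corollary directly from Theorem~\ref{AVdim2} by identifying the Ext-groups appearing in the pairing $AV$ with ordinary étale cohomology groups of the sheaf $F'$. The only extra ingredient needed is the local-to-global spectral sequence for Ext together with the computation of the local Ext-sheaves carried out in Lemma~\ref{faisceauext}.

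First I would observe that, since $F$ is locally constant constructible, there exists $m>0$ with $mF=0$, so that every homomorphism $F\to\mathbb{G}_m$ factors through $\mu_m$; hence $F'=\underline{\text{Hom}}_U(F,\mathbb{G}_m)=\underline{\text{Hom}}_U(F,\mu_m)$ is again locally constant constructible. Next, Lemma~\ref{faisceauext} gives $\underline{\text{Ext}}^q_U(F,\mathbb{G}_m)=0$ for $q\geq 1$, so the spectral sequence
$$E_2^{p,q}=H^p\big(U,\underline{\text{Ext}}^q_U(F,\mathbb{G}_m)\big)\Rightarrow \text{Ext}^{p+q}_U(F,\mathbb{G}_m)$$
degenerates at $E_2$ and yields canonical isomorphisms $\text{Ext}^r_U(F,\mathbb{G}_m)\cong H^r(U,\underline{\text{Hom}}_U(F,\mathbb{G}_m))=H^r(U,F')$ for every $r$ (this is exactly the degeneration already exploited in the proof of Lemma~\ref{Ext=0}). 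In particular $H^r(U,F')$ is finite by Proposition~\ref{finitude}(ii), and $H^{3-r}_c(U,F)$ is finite by Proposition~\ref{finitude}(i), so both sides of the claimed pairing are finite groups.

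It will then remain to check that, under this identification, the pairing $AV\colon \text{Ext}^r_U(F,\mathbb{G}_m)\times H^{3-r}_c(U,F)\to H^3_c(U,\mathbb{G}_m)$ coincides with the cup-product pairing $H^r(U,F')\times H^{3-r}_c(U,F)\to H^3_c(U,\mathbb{G}_m)$ induced by the evaluation morphism of sheaves $F'\otimes F\to\mathbb{G}_m$. This is the routine, but genuinely necessary, point: one unwinds the definition of the Yoneda pairing $\text{Hom}_{D(U)}(F,\mathbb{G}_m[r])\times H^{3-r}_c(U,F)\to H^3_c(U,\mathbb{G}_m)$ and compares it, through the edge map $H^r(U,\underline{\text{Hom}}_U(F,\mathbb{G}_m))\to\text{Ext}^r_U(F,\mathbb{G}_m)$ of the above spectral sequence, with the cup product arising from $F'\otimes F\to\mathbb{G}_m$; the compatibility is formal once one notes that here this edge map is an isomorphism because the higher $\underline{\text{Ext}}$-sheaves vanish. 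The hard part — really the only part requiring care — is precisely this last bookkeeping comparison; everything else is an immediate consequence of Theorem~\ref{AVdim2}, Proposition~\ref{G_m}(ii) (which supplies $H^3_c(U,\mathbb{G}_m)\cong\mathbb{Q}/\mathbb{Z}$), Lemma~\ref{faisceauext}, and Proposition~\ref{finitude}.
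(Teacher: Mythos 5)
Your proof is correct and follows essentially the same route as the paper: the paper's own argument consists precisely in identifying $H^r(U,F')$ with $\text{Ext}^r_U(F,\mathbb{G}_m)$ via the degeneration of the local-to-global Ext spectral sequence already exploited in Lemma~\ref{Ext=0} (which rests on Lemma~\ref{faisceauext}), and then invoking Theorem~\ref{AVdim2}. Your additional remarks on finiteness and on the compatibility of the Yoneda pairing with the cup product are sound and only make explicit points the paper leaves implicit.
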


\begin{proof}
Il suffit d'identifier $H^r(U,F')$ et $\text{Ext}^{r}_U(F,\mathbb{G}_m)$. Cela a été fait dans le lemme \ref{Ext=0}. 
\end{proof}

\begin{remarque}\label{sép}
Dans le cas où $k$ est séparablement clos de caractéristique $p>0$, le corollaire précédent reste vrai à condition de supposer que $F$ est de $n$-torsion avec $n$ non divisible par $p$. On attire en particulier l'attention sur le fait que, si $k$ est séparablement clos mais non algébriquement clos, le corps $k(t)$ est encore de dimension cohomologique 1 puisque la cohomologie étale est invariante par extension purement inséparable.
\end{remarque}

\section{Théorèmes de dualité arithmétique}\label{secdualdim2}

\hspace{4ex} On ne suppose plus $k$ séparablement clos et on note $\mathbb{P}$ l'ensemble des nombres premiers. Dans toute la suite de l'article, on supposera qu'il existe des entiers $d \geq -1$ et $p \in \{0\} \cup \mathbb{P}$ tels que:
\begin{itemize}
\item[$\bullet$] $\text{Car}(k) \in \{0,p\}$;
\item[$\bullet$] le corps $k$ est de dimension cohomologique $d+1$;
\item[$\bullet$] pour chaque entier naturel non nul $n$, on a un isomorphisme $H^{d+1}(k,\mu_n^{\otimes d}) \cong \mathbb{Z}/n\mathbb{Z}$ de sorte que, pour $m|n$, le morphisme naturel $H^{d+1}(k,\mu_m^{\otimes d})\rightarrow H^{d+1}(k,\mu_n^{\otimes d})$ s'identifie au morphisme qui envoie la classe de 1 dans $\mathbb{Z}/m\mathbb{Z}$ sur la classe de $n/m$ dans $\mathbb{Z}/n\mathbb{Z}$;
\item[$\bullet$] pour tout $\text{Gal}(k^s/k)$-module fini $M$ d'ordre $n$ non divisible par $p$, le cup-produit induit un accouplement parfait de groupes finis: $$H^r(k,M) \times H^{d+1-r}(k,\text{Hom}(M,\mu_n^{\otimes d})) \rightarrow H^{d+1}(k,\mu_n^{\otimes d}) \cong \mathbb{Z}/n\mathbb{Z}.$$
\end{itemize}

On dira que $k$ est \textbf{un corps $(p,d)$-bon pour la dualité}.

\begin{example}
 Un corps séparablement clos de caractéristique $p$ est $(p,-1)$-bon pour la dualité. Un corps fini de caratéristique $p$ est $(p,0)$-bon pour la dualité d'après l'exemple I.1.10 de \cite{MilADT}. Un corps $\ell$-adique est $(0,1)$-bon pour la dualité d'après le corollaire I.2.3 de \cite{MilADT}. En procédant comme dans la preuve du théorème I.2.17 de \cite{MilADT}, on peut montrer qu'un corps de valuation discrète complet de corps résiduel $(p,d)$-bon pour la dualité est $(p,d+1)$-bon pour la dualité. Par exemple, les corps $d$-locaux au sens de \cite{Izq1} sont $(p,d)$-bons pour la dualité si l'on choisit pour $p$ la caractéristique du corps 1-local correspondant. 
\end{example}

\hspace{4ex} On rappelle que $R_0$ désigne une $k$-algèbre commutative, locale, géométriquement intègre (ie $R_0 \otimes_k k^s$ est intègre), normale, hensélienne, excellente, de dimension 2 et de corps résiduel $k$. On note toujours $K_0$ son corps des fractions, $\mathfrak{m}_0$ son idéal maximal, $\mathcal{X}_0$ le schéma $ \text{Spec} \; R_0$ et $X_0$ le schéma $\mathcal{X}_0 \setminus \{\mathfrak{m}_0\}$. Le but de cette section est d'établir des théorèmes de dualité de type Poitou-Tate pour les modules finis et les tores sur $K_0$.

\begin{remarque}
En comparant tous les résultats qui vont suivre à ceux des articles \cite{HS1}, \cite{HS2}, \cite{CTH}, \cite{Izq1} et \cite{Izq3}, on remarquera que le corps $K_0$ se comporte de manière très similaire au corps des fonctions d'une courbe sur un corps $(p,d+1)$-bon pour la dualité. Par exemple, en ignorant les phénomènes liés à la caractéristique positive, $\mathbb{C}((x,y))$ se comporte comme $\mathbb{C}((x))(y)$ et $\mathbb{F}_p(t)$, et $\mathbb{F}_p((x,y))$ et $\mathbb{C}((t))((x,y))$ se comportent comme $\mathbb{Q}_p(x)$ et $\mathbb{C}((t))((x))(y)$. 
\end{remarque}

\subsection{Dualité d'Artin-Verdier et descente galoisienne}

\begin{lemma}
La $k^s$-algèbre $\overline{R_0}=R_0 \otimes_k k^s$ est locale, intègre, normale, hensélienne, excellente, de dimension 2, de corps résiduel $k^s$.
\end{lemma}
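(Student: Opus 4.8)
The plan is to realise $\overline{R_0}$ as the filtered colimit $\varinjlim_{k'}R_0\otimes_k k'$ over the finite separable subextensions $k'$ of $k^s/k$, to establish all the asserted properties at each finite stage $R_0\otimes_k k'$ (which will turn out to have residue field $k'$), and then to pass to the colimit; the only genuinely delicate point will be noetherianity and excellence of the colimit. For the finite stages: since $k'/k$ is finite separable, $R_0\otimes_k k'$ is a free $R_0$-module of rank $[k':k]$ and $R_0\to R_0\otimes_k k'$ is finite étale (base change of $\operatorname{Spec}k'\to\operatorname{Spec}k$); as $R_0$ is flat over $k$, $R_0\otimes_k k'$ embeds into $R_0\otimes_k k^s$, which is a domain by hypothesis, so $R_0\otimes_k k'$ is a domain. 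Being finite over the henselian local ring $R_0$ it is a finite product of local rings, hence local, and reduction modulo $\mathfrak m_0$ identifies its residue field with $k'$. Moreover it is noetherian and excellent (of finite type over the excellent ring $R_0$), normal (étale over the normal ring $R_0$, by Serre's criterion; see \cite{EGA42}), henselian (local and finite over the henselian local ring $R_0$), and of dimension $2$ (finite over the two-dimensional domain $R_0$).

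Next I would pass to the colimit. The transition maps $R_0\otimes_k k'\hookrightarrow R_0\otimes_k k''$ are injective and local, so $\overline{R_0}$ is local with maximal ideal $\mathfrak m_0\overline{R_0}$ and residue field $\varinjlim k'=k^s$; it is a domain (this is precisely the hypothesis); it is normal, being a filtered colimit of normal domains along injective maps; it is henselian, being a filtered colimit of henselian local rings along local homomorphisms (see \cite{EGA44}); and it has dimension $2$, since $R_0\hookrightarrow\overline{R_0}$ is an integral extension of domains ($\overline{R_0}$ being generated over $R_0$ by $k^s$, whose elements are integral over $k\subseteq R_0$).

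The hard part is that $\overline{R_0}$ is noetherian and excellent: this does \emph{not} follow formally, since a filtered colimit of noetherian (resp. excellent) rings need not be noetherian (resp. excellent). Here the idea is to compare $\overline{R_0}$ with a strict henselization. Because $R_0\to R_0\otimes_k k'$ is finite étale one has a canonical isomorphism $(R_0\otimes_k k')^{\mathrm{sh}}\cong R_0^{\mathrm{sh}}$ for each $k'$; in particular $R_0^{\mathrm{sh}}$ is ind-étale and local over every $R_0\otimes_k k'$, and passing to the colimit over $k'$ yields a canonical morphism $\overline{R_0}\to R_0^{\mathrm{sh}}$ which is injective, ind-étale (hence flat and even regular) and local, therefore faithfully flat. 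Now $R_0^{\mathrm{sh}}$ is noetherian (strict henselization of a noetherian local ring) and excellent (strict henselization of an excellent local ring, after Greco). Since noetherianity descends along faithfully flat ring maps, $\overline{R_0}$ is noetherian; and since quasi-excellence descends along regular faithfully flat morphisms while universal catenariness descends along arbitrary faithfully flat morphisms, $\overline{R_0}$ is excellent.

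I expect this last step to be the main obstacle: one must resist the false intuition that a colimit of excellent rings is excellent, and instead produce a sufficiently well-behaved noetherian, excellent overring — here the ind-étale, faithfully flat cover $\overline{R_0}\to R_0^{\mathrm{sh}}$ — along which to descend the two properties, taking care that the excellence descent really does use the regularity (not merely the flatness) of this morphism.
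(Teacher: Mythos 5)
Your proof is correct, and the finite stages ($R_0\otimes_k k'$ finite étale over $R_0$, a domain by geometric integrality, hence local, normal, henselian, excellent, of dimension $2$ and residue field $k'$) are exactly as in the paper. Where you genuinely diverge is on the point you rightly single out as delicate, namely getting noetherianity and excellence of the colimit: the paper disposes of the whole colimit step (local, normal, henselian, excellent) by citing the appendix of Bourbaki and Marot's theorem on flat inductive limits of $P$-rings, whereas you descend along an ind-étale faithfully flat comparison map $\overline{R_0}\to R_0^{\mathrm{sh}}$. Your route works, but note that this map is in fact an \emph{isomorphism}: since $R_0$ is henselian with residue field $k$, the finite étale local $R_0$-algebra with residue field $k'$ is precisely $R_0\otimes_k k'$ (its reduction mod $\mathfrak m_0$ is $k'$), so $R_0^{\mathrm{sh}}=\varinjlim_{k'}R_0\otimes_k k'=\overline{R_0}$. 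This simplifies your argument considerably — there is nothing to descend, and excellence is exactly Greco's theorem on strict henselizations — and it insulates you from the one claim in your write-up that deserves scrutiny, namely the unconditional descent of universal catenarity and quasi-excellence along faithfully flat (regular) maps, which is more delicate to source than descent of noetherianity. The remaining small difference is the dimension count: you use that $R_0\hookrightarrow\overline{R_0}$ is integral (elements of $k^s$ are integral over $k$), while the paper uses flatness and the fibre-dimension formula (théorème 4.3.12 de \cite{Liu}); both are fine, and your integrality argument is arguably more elementary.
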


\begin{proof}
Comme $R_0$ est géométriquement intègre, $\overline{R_0}$ est intègre. Ainsi, pour chaque extension finie $l$ de $k$, la $l$-algèbre $R_0 \otimes_k l$ est intègre. De plus, c'est une extension finie étale de $R_0$. Par conséquent, comme $R_0$ est local normal hensélien excellent, il en est de même de $R_0 \otimes_k l$. On vérifie immédiatement que l'idéal maximal de $R_0 \otimes_k l$ est $\mathfrak{m}_l = \mathfrak{m}_0 \otimes_k l$ et que le corps résiduel de $R_0 \otimes _k l$ est $l$. On déduit alors de la proposition 1 de l'appendice de \cite{Bou} et du corollaire 4.4 de \cite{Mar} que $\overline{R_0}$ est un anneau local normal hensélien excellent d'idéal maximal $\overline{\mathfrak{m}} = \mathfrak{m}_0 \otimes_k k^s$ et de corps résiduel $k^s$. Finalement, l'injection $R_0 \hookrightarrow \overline{R}_0$ étant plate, le théorème 4.3.12 de \cite{Liu} montre que $\dim \; \overline{R_0} = \dim \; R_0 = 2$.
\end{proof}

\begin{theorem}\label{AVdquelc}
Soient $U$ un ouvert de $X_0$ et $F$ un schéma en groupes fini étale abélien sur $U$ de $n$-torsion, avec $n$ non divisible par $p$. Notons $F' = \text{Hom}(F, \mu_n^{\otimes (d+2)})$. Le groupe $H^{d+4}_c(U,\mu_n^{\otimes (d+2)})$ est isomorphe à $\mathbb{Z}/n\mathbb{Z}$ et l'accouplement naturel:
$$H^r(U,F) \times H^{d+4-r}_c(U,F') \rightarrow H^{d+4}_c(U,\mu_n^{\otimes (d+2)}) \cong \mathbb{Z}/n\mathbb{Z}$$
est un accouplement parfait de groupes finis.
\end{theorem}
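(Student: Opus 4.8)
Le plan est de déduire cet énoncé de la dualité d'Artin--Verdier de la section \ref{secAVdim2} par descente galoisienne le long de $k^s/k$, en se servant de ce que $k$ est $(p,d)$-bon pour la dualité. D'après le lemme précédent, la $k^s$-algèbre $\overline{R_0} := R_0 \otimes_k k^s$ est locale, intègre, normale, hensélienne, excellente, de dimension $2$ et de corps résiduel le corps séparablement clos $k^s$ : elle est donc du type étudié dans la section \ref{secAVdim2} (en caractéristique $p>0$, via la remarque \ref{sép}, ce qui est licite puisque $n$ est premier à $p$). Posons $\overline{X_0} = \text{Spec}\,\overline{R_0} \setminus \{\overline{\mathfrak m}\} = X_0 \times_{\text{Spec}\,k} \text{Spec}\,k^s$, $\overline U = U \times_{\text{Spec}\,k} \text{Spec}\,k^s$, et notons $\overline F$, $\overline{F'}$ les images réciproques de $F$, $F'$ sur $\overline U$. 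Comme $R_0$ est géométriquement intègre, $\overline{X_0} \to X_0$ est un pro-revêtement étale galoisien de groupe $G_k = \text{Gal}(k^s/k)$, d'où (en passant à $X_0$ et au foncteur $j_!$ pour la cohomologie à support compact) des quasi-isomorphismes $R\Gamma(U,F) \cong R\Gamma(k, R\Gamma(\overline U, \overline F))$ et $R\Gamma_c(U,F') \cong R\Gamma(k, R\Gamma_c(\overline U, \overline{F'}))$, ainsi que les suites spectrales de Hochschild--Serre afférentes.

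On dispose alors de deux ingrédients de dualité. Primo, le corollaire \ref{AVdim2bis}, appliqué sur $\overline U$ au faisceau localement constant constructible de $n$-torsion $\overline F$, fournit un accouplement parfait de groupes finis $H^r(\overline U, \overline F) \times H^{3-r}_c(\overline U, \underline{\text{Hom}}(\overline F, \mu_n)) \to H^3_c(\overline U, \mu_n)$, lequel est $G_k$-équivariant puisque toutes les constructions en jeu le sont ; l'examen de l'identification explicite de la section \ref{secAVdim2} (cf. la reformulation du corollaire \ref{AVdim2bis} dans l'introduction) montre que $H^3_c(\overline U, \mu_n) \cong \mathbb{Z}/n\mathbb{Z}(-1)$ comme $G_k$-module, c'est-à-dire que $H^3_c(\overline U, \mu_n^{\otimes 2})$ est canoniquement isomorphe à $\mathbb{Z}/n\mathbb{Z}$. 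Compte tenu de la perfection de cet accouplement et des propriétés de finitude et d'annulation de la section \ref{secAVdim2} (proposition \ref{finitude}, corollaire \ref{Hc=0} et lemme \ref{H=0}), on le relève en un quasi-isomorphisme de complexes bornés de $G_k$-modules
$$R\Gamma_c\bigl(\overline U, \underline{\text{Hom}}(\overline F, \mu_n^{\otimes 2})\bigr) \;\cong\; R\text{Hom}_{\mathbb{Z}/n\mathbb{Z}}\bigl(R\Gamma(\overline U, \overline F), \mathbb{Z}/n\mathbb{Z}\bigr)[-3].$$
Secundo, l'hypothèse de $(p,d)$-bonté se relève de même en un quasi-isomorphisme $R\Gamma\bigl(k, R\text{Hom}_{\mathbb{Z}/n\mathbb{Z}}(C, \mu_n^{\otimes d})\bigr) \cong R\text{Hom}_{\mathbb{Z}/n\mathbb{Z}}\bigl(R\Gamma(k, C), \mathbb{Z}/n\mathbb{Z}\bigr)[-(d+1)]$, valable pour tout complexe borné $C$ de $G_k$-modules finis d'ordre premier à $p$.

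Il ne reste alors qu'à composer ces deux quasi-isomorphismes. Comme $\overline{F'} = \underline{\text{Hom}}(\overline F, \mu_n^{\otimes (d+2)}) = \underline{\text{Hom}}(\overline F, \mu_n^{\otimes 2}) \otimes \mu_n^{\otimes d}$ et que $\mu_n^{\otimes d}$ est constant sur $\overline U$, la formule de projection donne $R\Gamma_c(\overline U, \overline{F'}) \cong R\text{Hom}_{\mathbb{Z}/n\mathbb{Z}}(R\Gamma(\overline U, \overline F), \mu_n^{\otimes d})[-3]$ ; en appliquant $R\Gamma(k,-)$ puis le second quasi-isomorphisme au complexe $C = R\Gamma(\overline U, \overline F)$, on aboutit à $R\Gamma_c(U, F') \cong R\text{Hom}_{\mathbb{Z}/n\mathbb{Z}}(R\Gamma(U,F), \mathbb{Z}/n\mathbb{Z})[-(d+4)]$. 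Comme $\mathbb{Z}/n\mathbb{Z}$ est un $\mathbb{Z}/n\mathbb{Z}$-module injectif et que les groupes $H^\bullet(U,F)$, $H^\bullet_c(U,F')$ sont finis — ce qui résulte des suites spectrales de Hochschild--Serre, de la section \ref{secAVdim2} et de la finitude incluse dans l'hypothèse de $(p,d)$-bonté —, cela équivaut aux isomorphismes $H^{d+4-r}_c(U, F') \cong H^r(U,F)^D$ pour tout $r$, autrement dit à la perfection de l'accouplement annoncé ; et en prenant $F = \mathbb{Z}/n\mathbb{Z}$ on obtient $H^{d+4}_c(U, \mu_n^{\otimes (d+2)}) \cong H^{d+1}\bigl(k, H^3_c(\overline U, \mu_n^{\otimes (d+2)})\bigr) \cong H^{d+1}(k, \mu_n^{\otimes d}) \cong \mathbb{Z}/n\mathbb{Z}$. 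Le point qui demande le plus de soin est le relèvement en catégorie dérivée des deux accouplements parfaits de départ (ce qui, comme dans les arguments analogues de \cite{MilADT}, \cite{HS1} ou \cite{Izq1}, repose sur la compatibilité des cup-produits aux diverses structures en jeu), ainsi que le contrôle du twist de Tate dans l'identification $H^3_c(\overline U, \mu_n) \cong \mathbb{Z}/n\mathbb{Z}(-1)$ ; on peut d'ailleurs se passer de la catégorie dérivée et raisonner directement sur les suites spectrales de Hochschild--Serre, pourvu que l'on établisse leur compatibilité avec les accouplements d'Artin--Verdier sur $\overline U$ et de dualité sur $k$.
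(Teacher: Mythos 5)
Votre démonstration est correcte et suit essentiellement la même stratégie que celle du texte : descente galoisienne le long de $k^s/k$, dualité d'Artin--Verdier sur $\overline U$ (corollaire \ref{AVdim2bis} appliqué via le lemme sur $\overline{R_0}$, avec cible $\mathbb{Z}/n\mathbb{Z}(-1)$), dualité sur $k$ issue de la $(p,d)$-bonté, le tout composé en catégorie dérivée comme dans la proposition 2.1 de \cite{Izq1}. Vous explicitez en outre le contrôle du twist de Tate et le calcul de $H^{d+4}_c(U,\mu_n^{\otimes(d+2)})$ que le texte laisse implicites, ce qui est bienvenu.
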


\begin{proof}
Notons $G$ le groupe de Galois absolu de $k$. Comme $k$ est un corps $(p,d)$-bon pour la dualité, on montre exactement comme dans le théorème 1.1 de \cite{Izq1} que l'on a un isomorphisme dans la catégorie dérivée:
$$\mathbb{R}\text{Hom}_{G,\mathbb{Z}/n\mathbb{Z}} (M^{\bullet}, \mathbb{Z}/n\mathbb{Z}(d))[d+1] \cong \mathbb{R}\text{Hom}_{\mathbb{Z}/n\mathbb{Z}} (\mathbb{R}\Gamma_G M^{\bullet}, \mathbb{Z}/n\mathbb{Z})$$
pour chaque complexe $M^{\bullet}$ de $G$-modules discrets de $n$-torsion borné inférieurement. Par ailleurs, d'après le théorème \ref{AVdim2bis} et le lemme précédent, si l'on note $\overline{U} = U \times_k k^s$ et $\overline{F}$ la restriction de $F$ à $\overline{U}$, on a des accouplements parfaits $G$-équivariants de groupes finis:
$$H^r(\overline{U},\overline{F}) \times H^{3-r}_c(\overline{U},\text{Hom}(\overline{F},\mu_n)) \rightarrow \mathbb{Z}/n\mathbb{Z}(-1).$$
Comme dans le théorème 1.1 de \cite{Izq1}, cela fournit un isomorphisme dans la catégorie dérivée des groupes abéliens:
$$\mathbb{R} \text{Hom}_{\overline{U},\mathbb{Z}/n\mathbb{Z}}(\overline{F},\mu_n) \cong \mathbb{R} \text{Hom}_{\mathbb{Z}/n\mathbb{Z}} (\mathbb{R}\Gamma_c(\overline{U},\overline{F}), \mathbb{Z}/n\mathbb{Z}(-1))[-3].$$
On conclut alors exactement de la même manière que la proposition 2.1 de \cite{Izq1}.
\end{proof}

\subsection{Dualité de Poitou-Tate pour les modules finis}

\hspace{4ex} Soit $F$ un $\text{Gal}(K_0^s/K_0)$-module fini d'ordre $n$ non divisible par $p$. Notons $F'=\text{Hom}(F,\mu_n^{\otimes (d+2)})$, et posons, pour $r \in \{1,2,...,d+3\}$:
$$\Sha^r(K_0,F)=\text{Ker}\left( H^r(K,F) \rightarrow \prod_{v \in X_0^{(1)}} H^r(K_{0,v},F)\right).$$ 

\begin{theorem}\label{PTC((x,t))}
Pour $r \in \{1,2,...,d+3\}$, il existe un accouplement parfait de groupes finis:
$$\Sha^r(K_0,F) \times \Sha^{d+4-r}(K_0,F') \rightarrow \mathbb{Q}/\mathbb{Z}.$$
\end{theorem}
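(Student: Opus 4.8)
The plan is to deduce this from the Artin--Verdier duality of Theorem \ref{AVdquelc} by the classical Poitou--Tate argument (as in section I.4 of \cite{MilADT}, or the analogous statements for function fields of curves over higher local fields in \cite{Izq1}); the genuinely new input is Theorem \ref{AVdquelc}, the rest being formal. The local ingredient is the following: for $v\in X_0^{(1)}$ the completion $K_{0,v}$ is the fraction field of a complete discrete valuation ring whose residue field $k(v)$ is itself the fraction field of a henselian discrete valuation ring with residue field $k$; since $k$ is $(p,d)$-bon pour la dualité, $k(v)$ is $(p,d+1)$-bon (for coefficients prime to the residue characteristic a henselian discrete valuation ring behaves like its completion), hence $K_{0,v}$ is $(p,d+2)$-bon. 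As $F''\cong F$, this yields, for $F$ of order $n$ prime to $p$, perfect pairings of finite groups $H^s(K_{0,v},F)\times H^{d+3-s}(K_{0,v},F')\to\mathbb{Z}/n\mathbb{Z}$, with $H^s(K_{0,v},F)=0$ for $s\notin\{0,\dots,d+3\}$.

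Next, fix a dense open $U_0\subseteq X_0$ on which $F$ extends to a finite étale group scheme $\mathcal{F}$, set $\mathcal{F}'=\underline{\text{Hom}}_{U_0}(\mathcal{F},\mu_n^{\otimes(d+2)})$, and write $H^s_c$ as in Remarque \ref{suppferrq}. For each open $U\subseteq U_0$ (so $X_0\setminus U$ is finite) I would fit together the localization exact sequences
$$\cdots\to\bigoplus_{v\in X_0\setminus U}H^{s-1}(K_{0,v},F)\to H^s_c(U,\mathcal{F})\to H^s(U,\mathcal{F})\to\bigoplus_{v\in X_0\setminus U}H^s(K_{0,v},F)\to\cdots$$
for $\mathcal{F}$ and for $\mathcal{F}'$, the Artin--Verdier pairings $H^s(U,\mathcal{F})\times H^{d+4-s}_c(U,\mathcal{F}')\to\mathbb{Z}/n\mathbb{Z}$ and $H^s_c(U,\mathcal{F})\times H^{d+4-s}(U,\mathcal{F}')\to\mathbb{Z}/n\mathbb{Z}$ furnished by Theorem \ref{AVdquelc} applied to $\mathcal{F}$ and to $\mathcal{F}'$, and the local dualities above, into a morphism of long exact sequences identifying the localization sequence of $\mathcal{F}$ with the $\mathbb{Z}/n\mathbb{Z}$-dual of that of $\mathcal{F}'$. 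In particular $H^s_c(U,\mathcal{F})\to H^s(U,\mathcal{F})$ is, up to sign, dual to $H^{d+4-s}_c(U,\mathcal{F}')\to H^{d+4-s}(U,\mathcal{F}')$, so the group $\mathbb{D}^s(U):=\text{Im}(H^s_c(U,\mathcal{F})\to H^s(U,\mathcal{F}))$ (finite, being a quotient of $H^s_c(U,\mathcal{F})$, cf. Proposition \ref{finitude}) is canonically dual to $\mathbb{D}^{d+4-s}(U,\mathcal{F}'):=\text{Im}(H^{d+4-s}_c(U,\mathcal{F}')\to H^{d+4-s}(U,\mathcal{F}'))$.

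It then remains to identify $\Sha$ inside this picture by passing to the limit over $U$. Using $\varinjlim_U H^s(U,\mathcal{F})=H^s(K_0,F)$ and the transition maps $H^s_c(V,\mathcal{F})\to H^s_c(U,\mathcal{F})$ for $V\subseteq U$, one checks that the images $\text{Im}(H^s_c(U,\mathcal{F})\to H^s(K_0,F))$ form a decreasing family of finite subgroups of $H^s(K_0,F)$ with intersection $\Sha^s(K_0,F)$: such an image consists of classes trivial at every $v\in X_0\setminus U$ and unramified at every $v\in U$, so the intersection over all $U$ is the set of classes trivial at all $v$, while conversely any class of $\Sha^s$ lies in the image of $H^s_c(W,\mathcal{F})$ for $W$ small by exactness of the localization sequence. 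Being a decreasing family of finite groups it stabilizes, whence $\Sha^s(K_0,F)=\mathbb{D}^s(U)/N^s(U)$ for $U$ small, $N^s(U)$ being the subgroup of $\mathbb{D}^s(U)$ of classes that vanish on some smaller open. Transporting the duality of the previous paragraph to the limit --- equivalently, checking that $N^s(U)$ and $N^{d+4-s}(U,\mathcal{F}')$ are mutual orthogonal complements under the pairing on $\mathbb{D}^s(U)\times\mathbb{D}^{d+4-s}(U,\mathcal{F}')$, which one verifies from the excision description of $\ker(H^s(U,\mathcal{F})\to H^s(V,\mathcal{F}))$ and the dual statement for $\mathcal{F}'$ --- yields the desired perfect pairing of finite groups $\Sha^r(K_0,F)\times\Sha^{d+4-r}(K_0,F')\to\mathbb{Q}/\mathbb{Z}$ for $1\le r\le d+3$; outside this range both sides vanish ($\Sha^0=0$ because the local restrictions are injective on $H^0$, and $H^s(K_0,F)=0$ for $s>d+3$).

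The main obstacle is the bookkeeping in the second paragraph: verifying that the two Artin--Verdier pairings, local duality, and the two localization sequences genuinely assemble into a morphism of long exact sequences with correct signs and (anti)commuting squares, together with the orthogonality of the subgroups $N^s(U)$ used in the last step. Both are routine in spirit --- they are exactly the points treated in the Poitou--Tate argument of \cite{Izq1} --- but need to be set up carefully.
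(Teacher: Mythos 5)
Your overall architecture matches the paper's sketch: the local input (each $K_{0,v}$ is $(p,d+2)$-bon pour la dualité because $k(v)$ is the fraction field of a henselian DVR over $k$, giving perfect pairings $H^s(K_{0,v},F)\times H^{d+3-s}(K_{0,v},F')\to\mathbb{Z}/n\mathbb{Z}$), the Artin--Verdier duality of Theorem \ref{AVdquelc} on opens $U$, the localization sequences, and the stabilization argument showing $\mathrm{Im}(H^s_c(U,\mathcal{F})\to H^s(K_0,F))=\Sha^s(K_0,F)$ for $U$ small are all correct and are exactly the ingredients the paper uses. The problem is your final descent step. You realize $\Sha^s$ as $\mathbb{D}^s(U)/N^s(U)$ and $\Sha^{d+4-s}$ as $\mathbb{D}^{d+4-s}(U,\mathcal{F}')/N^{d+4-s}(U,\mathcal{F}')$ and propose to descend the perfect pairing $\mathbb{D}^s(U)\times\mathbb{D}^{d+4-s}(U,\mathcal{F}')\to\mathbb{Q}/\mathbb{Z}$ to these quotients by checking that $N^s(U)$ and $N^{d+4-s}(U,\mathcal{F}')$ are \emph{mutual orthogonal complements}. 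That is the wrong condition. A perfect pairing $A\times B\to\mathbb{Q}/\mathbb{Z}$ of finite groups induces a perfect pairing $A/N_A\times B/N_B\to\mathbb{Q}/\mathbb{Z}$ (via the composite $N_B^{\perp}\hookrightarrow A\twoheadrightarrow A/N_A$) precisely when the \emph{restricted} pairing $N_A\times N_B\to\mathbb{Q}/\mathbb{Z}$ is perfect; your condition $N_B=N_A^{\perp}$ instead says that this restricted pairing is \emph{zero}, and it would identify $\Sha^s=\mathbb{D}^s/N^s$ with $(N^{d+4-s})^D$ rather than with $(\Sha^{d+4-s})^D$. (Concretely: if $N^s(U)=0$ your condition forces $N^{d+4-s}=\mathbb{D}^{d+4-s}$, i.e.\ $\Sha^{d+4-s}=0$, which certainly need not hold.) So as written the last step would not produce the theorem, and I do not see how the perfectness of the pairing $N^s(U)\times N^{d+4-s}(U,\mathcal{F}')$ --- the condition you actually need --- would follow from the excision description you invoke; $N^s(U)$ involves the map to $H^s(K_0,F)$, which has no finite dual partner at level $U$.

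The paper avoids this entirely by an asymmetric bookkeeping, which I suggest you adopt: pair the subgroup $D^r_{sh}(U,F)=\mathrm{Ker}\bigl(H^r(U,F)\to\prod_{v\in X_0^{(1)}}H^r(K_{0,v},F)\bigr)$ (product over \emph{all} $v$, including $v\in U$) of $H^r(U,F)$ against the quotient $\mathcal{D}^{d+4-r}(U,F')=\mathrm{Im}\bigl(H^{d+4-r}_c(U,F')\to H^{d+4-r}(K_0,F')\bigr)$ of $H^{d+4-r}_c(U,F')$. The key intermediate fact is the exactness of $\bigoplus_{v}H^{d+3-r}(K_{0,v},F')\to H^{d+4-r}_c(U,F')\to\mathcal{D}^{d+4-r}(U,F')\to 0$; dualizing it and comparing with the defining sequence of $D^r_{sh}$ via Artin--Verdier duality in the middle and local duality on the right gives $D^r_{sh}(U,F)\cong\mathcal{D}^{d+4-r}(U,F')^D$ by a single subgroup/quotient duality, with no orthogonality condition to verify. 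One then passes to the colimit over $U$: the left side tends to $\Sha^r(K_0,F)$, and the right side stabilizes at $\Sha^{d+4-r}(K_0,F')$ by exactly the decreasing-intersection argument you already give in your third paragraph.
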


\begin{remarque}
Plaçons-nous dans le cas $k=\mathbb{C}$, $p=0$, $d=-1$. Bien sûr, on aurait pu définir les groupes de Tate-Shafarevich en tenant compte de toutes les valuations discrètes de rang 1 de $K_0$:
$$\Sha^r_{tot}(K_0,F):=\text{Ker}\left( H^r(K_0,F) \rightarrow \prod_{v \in \Omega_{K_0}} H^r(K_{0,v},F)\right),$$ 
où $\Omega_{K_0}$ désigne toutes les valuations discrètes de rang 1 de $K_0$. Cependant, on ne peut pas espérer une dualité parfaite de la forme:
$$\Sha^1_{tot}(K_0,F) \times \Sha^{2}_{tot}(K_0,F') \rightarrow \mathbb{Q}/\mathbb{Z}$$
puisque $\Sha^2_{tot}(K_0,\mu_n)$ est toujours nul (théorème 1.2 de \cite{Hu}) alors que $\Sha^1_{tot}(K_0,\mathbb{Z}/n\mathbb{Z})$ peut être non nul (théorème 5 de \cite{Jaw}). Par ailleurs, ce dernier point montre que dans le théorème \ref{PTC((x,t))}, les groupes de Tate-Shafarevich qui apparaissent ne sont pas toujours nuls.
\end{remarque}

\hspace{4ex} Nous allons maintenant brièvement expliquer comment on obtient le théorème \ref{PTC((x,t))}, la preuve étant analogue à celle du théorème 2.4 de \cite{Izq1}. Soit $U$ un ouvert non vide de $X_0$. Considérons un schéma en groupes fini étale sur $U$ prolongeant $F$: on le notera toujours $F$. Posons:
\begin{align*}
D^r_{sh}(U,F) = \text{Ker}\left(  H^r(U,F) \rightarrow \prod_{v \in X^{(1)}_0} H^r(K_v,F)\right), \\
\mathcal{D}^{d+4-r}(U,F'):= \text{Im}\left(  H^{d+4-r}_c(U,F')  \rightarrow H^{d+4-r} (K_0, F')\right) .
\end{align*}

\begin{proposition} 
\begin{itemize}
\item[(i)] Il existe un ouvert non vide $U_0$ de $U$ tel que $\mathcal{D}^{d+4-r}(V,F') =  \Sha^{d+4-r}(K_0, F')$ pour chaque ouvert non vide $V \subseteq U_0$.
\item[(ii)] La suite $ \bigoplus_{v \in X_0^{(1)}} H^{d+3-r}(K_{0,v},F') \rightarrow H^{d+4-r}_c(U,F') \rightarrow \mathcal{D}^{d+4-r}(U,F') \rightarrow 0$ est exacte.
\end{itemize}
\end{proposition}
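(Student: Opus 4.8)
L'idée est de déduire les deux assertions des suites de localisation de la remarque~\ref{suppferrq}, complétées par une analyse des places intérieures à l'aide de l'excision et de la pureté, en suivant de près la preuve du théorème~2.4 de~\cite{Izq1} (dont le schéma est celui du cas des corps globaux, voir le chapitre~I de~\cite{MilADT}). Posons $m=d+4-r$ et notons, pour un ouvert non vide $W\subseteq X_0$, $u_W\colon H^m_c(W,F')\to H^m(W,F')$ le morphisme d'oubli des supports et $g\colon\text{Spec}\,K_0\hookrightarrow W$ le point générique. Je rassemblerais d'abord les outils suivants : (a) la suite exacte de la remarque~\ref{suppferrq}, $\cdots\to\bigoplus_{v\in X_0\setminus W}H^{m-1}(K_{0,v},F')\xrightarrow{\partial_W}H^m_c(W,F')\xrightarrow{u_W}H^m(W,F')\xrightarrow{\text{loc}_W}\bigoplus_{v\in X_0\setminus W}H^m(K_{0,v},F')$, qui donne $\text{Ker}(u_W)=\text{Im}(\partial_W)$ et $\text{Im}(u_W)=\text{Ker}(\text{loc}_W)$ ; (b) pour $v\in W$, la suite exacte de cofibre $H^m_c(W\setminus\{v\},F')\xrightarrow{\iota}H^m_c(W,F')\xrightarrow{c_v}H^m(k(v),F'_{\overline v})$, issue de $0\to j_!(F'|_{W\setminus\{v\}})\to F'|_W\to{i_v}_*F'_{\overline v}\to 0$, où l'on vérifie que $c_v$ est le composé de $u_W$ avec la restriction au point $v$ ; (c) la pureté absolue $H^s_v(\mathcal O^h_{X_0,v},F')\cong H^{s-2}(k(v),F'_{\overline v}(-1))$ (licite car $|F'|$ est premier à $p$) et, surtout, l'injectivité de $H^s(\mathcal O^h_{X_0,v},F')=H^s(k(v),F'_{\overline v})\hookrightarrow H^s(K^h_{0,v},F')$, qui tient à ce que l'application de Gysin $H^s_v(\mathcal O^h_{X_0,v},F')\to H^s(k(v),F'_{\overline v})$ est le cup-produit avec la classe de cycle du point fermé du schéma local régulier $\text{Spec}\,\mathcal O^h_{X_0,v}$, diviseur principal, donc nulle ; enfin (d) la naturalité de $u_W$ vis-à-vis de la restriction et de la covariance $\iota$, ainsi que la transitivité des morphismes de covariance.

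Pour (i) : d'après la proposition~\ref{finitude} (après réduction au cas $k$ algébriquement clos par descente galoisienne), chaque groupe $H^m_c(V,F')$ est fini, donc $\mathcal D^m(V,F')\subseteq H^m(K_0,F')$ aussi ; la naturalité de $u$ montre que $\mathcal D^m(V,F')\subseteq\mathcal D^m(W,F')$ dès que $V\subseteq W$, de sorte que la famille dirigée $\{\mathcal D^m(V,F')\}_V$ de sous-groupes finis est décroissante, donc stationnaire, ce qui fournit un ouvert non vide $U_0\subseteq U$ tel que $\mathcal D^m(V,F')=\mathcal D^m(U_0,F')$ pour tout ouvert non vide $V\subseteq U_0$. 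Si $\gamma$ appartient à cette valeur stationnaire, alors pour chaque $v\in X_0^{(1)}$ on choisit $V\subseteq U_0$ avec $v\notin V$ et l'on écrit $\gamma=g^*u_V(\beta)$ ; comme $\text{Im}(u_V)=\text{Ker}(\text{loc}_V)$ et $v\in X_0\setminus V$, l'image de $\gamma$ dans $H^m(K_{0,v},F')$ est nulle, donc $\gamma\in\Sha^m(K_0,F')$. Réciproquement, si $\gamma\in\Sha^m(K_0,F')$ se prolonge en $\gamma_W\in H^m(W,F')$, alors pour $V\subseteq W$ toutes les composantes de $\text{loc}_V(\gamma_W|_V)$ figurent parmi les composantes locales de $\gamma$, qui sont nulles, donc $\gamma_W|_V$ se relève le long de $u_V$ et $\gamma\in\mathcal D^m(V,F')$ ; ainsi $\Sha^m(K_0,F')$ coïncide avec la valeur stationnaire, d'où (i).

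Pour (ii) : la surjectivité de $H^m_c(U,F')\to\mathcal D^m(U,F')$ est la définition même de $\mathcal D^m(U,F')$. Le morphisme $\bigoplus_{v\in X_0^{(1)}}H^{m-1}(K_{0,v},F')\to H^m_c(U,F')$ est, sur le facteur indexé par $v$, le composé $H^{m-1}(K_{0,v},F')\xrightarrow{\partial_{U\setminus\{v\}}}H^m_c(U\setminus\{v\},F')\xrightarrow{\iota}H^m_c(U,F')$ (pour $v\notin U$, c'est simplement $\partial_U$, d'image $\text{Ker}(u_U)$). Son image est contenue dans $\text{Ker}(H^m_c(U,F')\to\mathcal D^m(U,F'))=\text{Ker}(g^*u_U)$ : pour $v\notin U$ parce que $\text{Ker}(u_U)=\text{Im}(\partial_U)$, et pour $v\in U$ parce que, par naturalité, $u_U(\iota\,\partial_{U\setminus\{v\}}\xi)$ se restreint en $u_{U\setminus\{v\}}(\partial_{U\setminus\{v\}}\xi)=0$ sur $U\setminus\{v\}$, qui contient déjà le point générique. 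Pour l'inclusion réciproque, je démontrerais par récurrence sur $|W\setminus V|$ l'énoncé suivant : \emph{si $\beta\in H^m_c(W,F')$ est tel que $u_W(\beta)$ s'annule dans $H^m(V,F')$ pour un certain ouvert non vide $V\subseteq W$, alors $\beta$ appartient à l'image de $\bigoplus_vH^{m-1}(K_{0,v},F')$}. Lorsque $V=W$, c'est l'égalité $\text{Ker}(u_W)=\text{Im}(\partial_W)$. Sinon, on choisit $v\in W\setminus V$ ; alors $u_W(\beta)$ meurt génériquement, donc dans $H^m(K^h_{0,v},F')$, de sorte que $c_v(\beta)$ — l'image de $u_W(\beta)$ dans $H^m(k(v),F'_{\overline v})$ — appartient au noyau de $H^m(k(v),F'_{\overline v})\hookrightarrow H^m(K^h_{0,v},F')$, lequel est nul. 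Par suite $\beta=\iota(\beta')$ pour un certain $\beta'\in H^m_c(W\setminus\{v\},F')$ ; par naturalité, $u_{W\setminus\{v\}}(\beta')$ est la restriction de $u_W(\beta)$ à $W\setminus\{v\}$, qui s'annule encore sur $V$, et $|(W\setminus\{v\})\setminus V|$ a diminué de $1$, de sorte que l'hypothèse de récurrence place $\beta'$ dans l'image de $\bigoplus_vH^{m-1}(K_{0,v},F')$ relative à $W\setminus\{v\}$ ; la transitivité de la covariance et la naturalité de $\partial$ transportent alors ceci dans l'image relative à $W$. Comme $\text{Ker}(g^*u_U)$ est précisément l'ensemble des $\beta$ tels que $u_U(\beta)$ s'annule sur un ouvert non vide, ceci achève l'exactitude en $H^m_c(U,F')$.

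Le point le plus délicat est cette dernière étape de (ii) : faire en sorte que les places intérieures $v\in U$ se comportent correctement. Les ingrédients essentiels y sont la pureté absolue pour les anneaux de valuation discrète henséliens $\mathcal O^h_{X_0,v}$ et, surtout, l'injectivité $H^m(\mathcal O^h_{X_0,v},F')\hookrightarrow H^m(K^h_{0,v},F')$, qui force $c_v(\beta)=0$ et permet à la récurrence de retirer les places une à une ; tout le reste est une chasse au diagramme avec les deux suites de localisation, menée exactement comme dans~\cite{Izq1}.
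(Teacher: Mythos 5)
Votre démonstration est correcte et suit pour l'essentiel la démarche du texte : pour (i) vous explicitez la chasse au diagramme laissée au lecteur, et pour (ii) vous identifiez correctement les deux ingrédients que la chasse annoncée utilise implicitement, à savoir la suite exacte $H^m_c(W\setminus\{v\},F')\to H^m_c(W,F')\to H^m(k(v),F'_{\overline{v}})$ permettant de retirer les points un à un et l'injectivité de $H^m(\mathcal{O}^h_v,F')\to H^m(K^h_{0,v},F')$ aux points intérieurs, conformément au schéma de \cite{MilADT} et de \cite{Izq1}. Signalons seulement une imprécision de rédaction sans conséquence : ce qui s'annule n'est pas l'isomorphisme de pureté lui-même mais la composée oubli du support $H^s_v(\mathcal{O}^h_v,F')\to H^s(\mathcal{O}^h_v,F')$, qui s'identifie au cup-produit avec la classe du point fermé dans $\mathrm{Pic}(\mathcal{O}^h_v)/n=0$, ce qui donne bien l'injectivité voulue.
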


\begin{proof}
\begin{itemize}
\item[(i)] Si $V \subseteq V'$ sont des ouverts non vides de $U$, on a $\mathcal{D}^{d+4-r}(V,F') \subseteq \mathcal{D}^{d+4-r}(V',F')$. Comme $\mathcal{D}^{d+4-r}(U,F')$ est fini, il existe un ouvert non vide $U_0$ de $U$ tel que $\mathcal{D}^{d+4-r}(V,F')=\mathcal{D}^{d+4-r}(U_0,F')$ pour chaque ouvert non vide $V \subseteq U_0$. Par ailleurs, pour $V \subseteq V'$ des ouverts non vides de $U$, on a un diagramme commutatif à lignes exactes:\\
\centerline{\xymatrix{
    \bigoplus_{v \not\in V'} H^{d+3-r}(K_{0,v}, F')  \ar[r] \ar[d]  & H^{d+4-r}_c(V',F') \ar[r] & H^{d+4-r}(V',F') \ar[d] \ar[r] & \bigoplus_{v \not\in V'} H^{d+4-r}(K_{0,v}, F')  \\
    \bigoplus_{v \not\in V} H^{d+3-r}(K_{0,v}, F')  \ar[r]  & H^{d+4-r}_c(V,F') \ar[u] \ar[r] & H^{d+4-r}(V,F')\ar[r] & \bigoplus_{v \not\in V} H^{d+4-r}(K_{0,v}, F').\ar[u]
  }
}
En prenant $V'=U_0$, une simple chasse au diagramme permet de montrer que $U_0$ convient.
\item[(ii)] Ce n'est qu'une chasse au diagramme dans le diagramme de (i).
\end{itemize}
\end{proof}

On a alors un diagramme commutatif à lignes exactes:\\
\centerline{\xymatrix{
0 \ar[r] & D^r_{sh}(U, F) \ar[r]\ar[d] & H^r(U,F) \ar[r]\ar[d]^{\cong} & \prod_{v \in X^{(1)}_0} H^r(K_{0,v},F)\ar[d]^{\cong}\\
0 \ar[r] & \mathcal{D}^{d+4-r}(U,F')^D \ar[r] & H^{d+4-r}_c(U,F')^D \ar[r] & \prod_{v \in X^{(1)}_0} H^{d+3-r}(K_{0,v},F')^D.
}}
Le morphisme vertical central est un isomorphisme d'après le théorème \ref{AVdim2bis}. Pour voir que le morphisme vertical de droite est un isomorphisme, il suffit d'appliquer le même méthode que Milne dans le théorème I.2.17 de \cite{MilADT} et d'utiliser que $k$ est un corps $(p,d)$-bon pour la dualité. On obtient alors un isomorphisme $D^r_{sh}(U, F) \cong \mathcal{D}^{d+4-r}(U,F')^D$. Il suffit de passer à la limite sur $U$ pour obtenir le théorème \ref{PTC((x,t))}.

\subsection{Dualité de Poitou-Tate pour les tores}

\begin{theorem}\label{PTC((x,t))tore}
Soit $T$ un $K_0$-tore de module de caractères $\hat{T}$ et de tore dual $T'$.
\begin{itemize}
\item[(i)] Supposons $d=-1$ (par exemple $k$ séparablement clos). On a des accouplements parfaits de groupes finis:
\begin{gather*}
\Sha^1(K_0,\hat{T})_{\text{non}-p} \times \overline{\Sha^2(K_0,T)}_{\text{non}-p} \rightarrow \mathbb{Q}/\mathbb{Z}\\
\Sha^1(K_0,T)_{\text{non}-p} \times \overline{\Sha^2(K_0,\hat{T})}_{\text{non}-p} \rightarrow \mathbb{Q}/\mathbb{Z}.
\end{gather*}
\item[(ii)] Supposons que $d=0$ (par exemple $k = \mathbb{C}((t))$ ou $k$ fini). On a un accouplement parfait de groupes finis:
$$\Sha^1(K_0,T)_{\text{non}-p} \times \overline{\Sha^2(K_0,T')}_{\text{non}-p} \rightarrow \mathbb{Q}/\mathbb{Z}.$$
\end{itemize}
\end{theorem}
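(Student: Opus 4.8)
The plan is to deduce everything from the finite-coefficient Poitou--Tate duality already available (Theorem \ref{PTC((x,t))}), mimicking the passage from finite modules to tori carried out over global fields in \cite{MilADT} and over function fields of curves over higher local fields in \cite{Izq1} (see also \cite{HS1}, \cite{HS2}, \cite{CTH}). Since all the groups in the conclusion are $\mathrm{non}$-$p$ torsion, it is enough to fix a prime $\ell\neq p$, establish the perfect pairings on $\ell$-primary parts, and take the direct sum over $\ell\in\mathbb{P}\setminus\{p\}$ at the end.

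I would first upgrade Theorem \ref{PTC((x,t))}: from it, together with the Artin--Verdier duality of Section \ref{secAVdim2} and the local dualities at the places $v\in X_0^{(1)}$, one obtains (exactly as in \cite{Izq1}) an exact sequence of Poitou--Tate type relating $H^r(K_0,-)$, the local terms and the $H^{d+4-r}(K_0,-)^D$ for a finite module and its twisted Cartier dual; the $\Sha$-duality alone does not suffice for the dévissage, because one must control the connecting maps. Next, for a non-empty open $U\subseteq X_0$ over which $T$, $T'$ and (when $d=-1$) the lattice $\widehat T$ spread out to $\mathcal{T}$, $\mathcal{T}'$, $\widehat{\mathcal{T}}$, I would record the finiteness inputs needed below: $\Sha^1(K_0,T)$, $\Sha^1(K_0,\widehat T)$, $\Sha^1(K_0,T')$ are finite, and the $\ell$-primary parts of the $\Sha^2$ and $\Sha^3$ of all the relevant sheaves are finite modulo their maximal divisible subgroups --- this follows from Proposition \ref{finitude} applied to the $\ell^n$-torsion sheaves, together with the facts that $H^1$ of a $K_0$-torus is torsion and that $\mathrm{cd}(K_0)$ is finite.

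The dévissage itself uses the $\ell^n$-isogeny $0\to{}_{\ell^n}\mathcal{T}\to\mathcal{T}\xrightarrow{\ell^n}\mathcal{T}\to0$ and the Kummer sequence $0\to\widehat{\mathcal{T}}\xrightarrow{\ell^n}\widehat{\mathcal{T}}\to\widehat{\mathcal{T}}/\ell^n\to0$ (and their analogues for $\mathcal{T}'$), which relate the cohomology of the torus and of the lattice to that of finite modules. The finite group schemes involved are Cartier dual up to twist: ${}_{\ell^n}T=\widehat{T}^{\vee}\otimes\mu_{\ell^n}$ and, for $d=0$, ${}_{\ell^n}T'=\widehat{T}\otimes\mu_{\ell^n}^{\otimes2}$, so that $\underline{\mathrm{Hom}}({}_{\ell^n}T,\mu_{\ell^n}^{\otimes(d+2)})$ is $\widehat{T}/\ell^n$ if $d=-1$ and ${}_{\ell^n}T'$ if $d=0$. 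Feeding the Poitou--Tate sequences for ${}_{\ell^n}T$, $\widehat{T}/\ell^n$, ${}_{\ell^n}T'$ into these dévissage sequences, passing to $\varinjlim_n$ on the degree-one side and $\varprojlim_n$ on the dual side, and using that all transition maps are compatible with the Artin--Verdier pairing, one should recover on one side $\Sha^1(K_0,T)\{\ell\}$ (resp. $\Sha^1(K_0,\widehat T)\{\ell\}$) and on the other $\overline{\Sha^2(K_0,T')}\{\ell\}$ (resp. $\overline{\Sha^2(K_0,T)}\{\ell\}$, $\overline{\Sha^2(K_0,\widehat T)}\{\ell\}$), together with the perfect pairing between them: the degree-zero contributions of the torus or lattice die in the colimit after $U$ has been shrunk, the maximal divisible subgroups account exactly for the difference between $\Sha^2$ and its $\ell$-adic completion, and the remaining connecting terms --- $\ell$-adic Tate modules of the finite groups $\Sha^3\{\ell\}$ and the corresponding $\varprojlim^1$ terms --- vanish. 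In case (i) the two pairings come from Theorem \ref{PTC((x,t))} applied to the Cartier-dual pair $({}_{\ell^n}T,\widehat{T}/\ell^n)$ in the two degrees $r=1$ and $r=2$. Summing over $\ell\neq p$ then gives the asserted pairings.

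The formal skeleton --- Theorem \ref{PTC((x,t))}, the Poitou--Tate sequence, and $\ell^n$-dévissage --- is routine; the genuine difficulty, and what forces the decorations $\overline{(\,\cdot\,)}$ and $(\,\cdot\,)_{\mathrm{non}-p}$ in the statement, is the last limit analysis: checking precisely that the degree-zero and ${}_{\ell^n}\Sha^3$ contributions vanish in the (co)limit, that the maximal divisible subgroups are exactly what is lost under $\ell$-adic completion, and that the relevant $\varprojlim^1$ groups are trivial. This rests on the finiteness inputs above, on the divisibility of the local unit groups already exploited in Section \ref{secAVdim2}, and on arguments identical to those of \cite{Izq1}; alternatively one could first reduce the torus statements to statements about flasque lattices via a flasque resolution $1\to S\to P\to T\to1$ with $P$ quasi-trivial, and only then dévisser to finite coefficients.
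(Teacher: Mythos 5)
Your overall strategy --- dévissage to finite coefficients, local duality at the $v\in X_0^{(1)}$, and a limit analysis isolating the divisible subgroups --- is the right one, but you organize it differently from the paper. The paper does not route the argument through Theorem \ref{PTC((x,t))}, nor through a full nine-term Poitou--Tate exact sequence over $K_0$ (such a sequence is never established there, and setting it up would require handling restricted topological products over the infinitely many places of $X_0^{(1)}$). Instead, following les théorèmes 7.1 et 7.2 de \cite{CTH} (et \cite{HS1}, \cite{Izq1} pour (ii)), it works throughout with integral models: the multiplication-by-$\ell^n$ dévissage is performed on $\mathcal{T}$ and $\hat{\mathcal{T}}$ over an open $V\subseteq X_0$, Theorem \ref{AVdquelc} then gives a perfect pairing of finite groups $H^1(V,\hat{\mathcal{T}})^{(\ell)}\{\ell\}\times H^2_c(V,\mathcal{T})\{\ell\}^{(\ell)}\to\mathbb{Q}/\mathbb{Z}$, and only afterwards does one pass to the Tate--Shafarevich groups via the localization sequence, the injectivity of $H^1(V,\hat{\mathcal{T}})\to H^1(K_0,\hat{T})$, and a stabilization of $\mathcal{D}^2(V,\mathcal{T})\{\ell\}$ for $V$ small (lemme 3.7 de \cite{HS1}). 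This buys exactly what you identify as the delicate point: the finiteness of $H^r_c(V,\cdot)$ for constructible sheaves (Proposition \ref{finitude}) makes the $\varprojlim$, $\varprojlim^1$ and divisible-part bookkeeping essentially formal at the level of $V$, whereas at the field level one must control connecting maps in a sequence the paper never constructs. Your alternative via flasque resolutions is also viable but is only used later in the paper (Proposition \ref{stabrat}, Theorem \ref{AFC((x,y))}).

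Two caveats. First, a slip: you write ${}_{\ell^n}T'=\hat{T}\otimes\mu_{\ell^n}^{\otimes 2}$; since $\hat{T'}=\hat{T}^{\vee}$ one has ${}_{\ell^n}T'=\hat{T}\otimes\mu_{\ell^n}$, which is indeed $\underline{\mathrm{Hom}}({}_{\ell^n}T,\mu_{\ell^n}^{\otimes 2})$, so your conclusion stands but the displayed formula is inconsistent with it. Second, and more substantively, the phrase ``all transition maps are compatible with the Artin--Verdier pairing'' glosses over the one genuinely new difficulty of case (ii) that the paper singles out: since $X_0$ is not a smooth variety over a field, one does not know that a product $\mathbb{Z}(1)\otimes^{\mathbf{L}}\mathbb{Z}(1)\to\mathbb{Z}(2)$ exists on its small étale site compatibly with $\mu_n\otimes^{\mathbf{L}}\mu_n[1]\to\mu_n^{\otimes 2}[1]$, and this is precisely what is needed to make the torus-level pairing commute with your finite-level ones in the dévissage. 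The paper circumvents this by noting that only a pairing $\mathbb{Z}(1)\otimes^{\mathbf{L}}\mathbb{Q}/\mathbb{Z}(1)\to\mathbb{Q}/\mathbb{Z}(2)$ is required, which exists by la proposition 1 de \cite{Kahn}. Any version of your argument, field-level or not, must confront this compatibility; your sketch should at least acknowledge it.
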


\begin{proof}
Les preuves sont semblables à celle du théorème \ref{PTC((x,t))}. 
\begin{itemize}
\item[(i)] La preuve est en tous points analogue à celle des théorèmes 7.1 et 7.2 de \cite{CTH}. Rappelons brièvement les grandes étapes de la preuve du premier accouplement, le second étant obtenu de manière très similaire. Pour simplifier, on suppose que $p=0$.\\
Soit $\ell$ un nombre premier. On considère $\mathcal{T}$ un tore étendant $T$ sur un ouvert non vide $U$ de $X_0$. Pour $v \in X_0^{(1)}$, un argument identique à celui de la proposition 3.4 de \cite{CTH} montre que l'accouplement naturel:
$$H^1(K_{0,v},\hat{T}) \times H^1(K_{0,v},T) \rightarrow H^2(K_{0,v},\mathbb{G}_m) \cong \mathbb{Q}/\mathbb{Z}$$
est un accouplement parfait de groupes finis. De plus, le théorème \ref{AVdquelc} permet de prouver que, pour $V$ ouvert non vide de $U$, l'accouplement naturel $$H^1(V,\hat{\mathcal{T}}) \times H^2_c(V,\mathcal{T}) \rightarrow H^3_c(V,\mathbb{G}_m) \cong \mathbb{Q}/\mathbb{Z}$$ induit un accouplement parfait de groupes finis:
$$H^1(V,\hat{\mathcal{T}})^{(\ell)}\{\ell\} \times H^2_c(V,\mathcal{T})\{\ell\}^{(\ell)} \rightarrow \mathbb{Q}/\mathbb{Z}.$$
Par conséquent, en introduisant pour chaque ouvert non vide $V$ de $U$ les groupes:
\begin{gather*}
D^1_{sh}(V,\hat{\mathcal{T}}) = \text{Ker}\left( H^1(V,\hat{\mathcal{T}}) \rightarrow \prod_{v \in X_0^{(1)}} H^1(K_{0,v},\hat{T})\right),\\
\mathcal{D}^2(V,\mathcal{T}) = \text{Im}\left( H^2_c(V,\mathcal{T}) \rightarrow H^2(K_0,T)\right),
\end{gather*}
et en utilisant la suite exacte de localisation (proposition 3.1.(1) de \cite{HS1}), on obtient un diagramme commutatif à lignes exactes:\\
\xymatrix{
0 \ar[r] & D^1_{sh}(V, \hat{\mathcal{T}})\{\ell\} \ar[r]\ar[d] & H^1(V,\hat{\mathcal{T}})\{\ell\} \ar[r]\ar[d]^{\cong} & \prod_{v \in X^{(1)}_0} H^1(K_{0,v},\hat{\mathcal{T}})\{\ell\}\ar[d]^{\cong}\\
0 \ar[r] & \overline{\mathcal{D}^{2}(V,\mathcal{T})\{\ell\}}^D \ar[r] & \overline{H^{2}_c(V,\mathcal{T})\{\ell\}}^D \ar[r] & \prod_{v \in X^{(1)}_0} H^{1}(K_{0,v},\mathcal{T})\{\ell\}^D.
}\\
Cela montre que $D^1_{sh}(V, \hat{\mathcal{T}})\{\ell\}$ et $\overline{\mathcal{D}^{2}(V,\mathcal{T})\{\ell\}}$ sont duaux l'un de l'autre. Reste à identifier ces deux groupes à des groupes de Tate-Shafarevich:
\begin{itemize}
\item[$\bullet$] on montre aisément (comme dans le lemme 7.3 de \cite{CTH}) que le morphisme $H^1(V,\hat{\mathcal{T}}) \rightarrow H^1(K_0,\hat{T})$ est injectif et donc que $D^1_{sh}(V, \hat{\mathcal{T}}) = \Sha^1(K_0,\hat{T})$;
\item[$\bullet$] le groupe $\mathcal{D}^2(U,\mathcal{T})$ est abélien de torsion de type cofini, et pour $V \subseteq V' \subseteq U$ des ouverts non vides, on a $\mathcal{D}^2(V,\mathcal{T}) \subseteq \mathcal{D}^2(V',\mathcal{T})$. Grâce au lemme 3.7 de \cite{HS1}, on peut alors trouver un ouvert non vide $U_0$ de $U$ (dépendant de $\ell$) tel que, pour tout ouvert non vide $V$ de $U_0$, on ait $\mathcal{D}^2(V,\mathcal{T})\{\ell\} = \mathcal{D}^2(U_0,\mathcal{T})\{\ell\}$. Une chasse au diagramme montre alors que $\mathcal{D}^2(U_0,\mathcal{T})\{\ell\} = \Sha^2(K_0,T)\{\ell\}$.
\end{itemize}
 Cela achève la preuve.  
\item[(ii)] La preuve est très similaire à celle du théorème 4.4 de \cite{HS1} et à celle du théorème 3.10 de \cite{Izq1}. Il n'y a qu'une seule différence: comme $X_0$ n'est pas une variété lisse sur un corps, on ne sait pas s'il existe un accouplement $\mathbb{Z}(1) \otimes^{\mathbf{L}} \mathbb{Z}(1) \rightarrow \mathbb{Z}(2)$ dans la catégorie dérivée des faisceaux étales sur $X_0$ qui soit compatible avec le morphisme naturel $\mu_n \otimes^{\mathbf{L}} \mu_n[1] \rightarrow \mu_n^{\otimes 2}[1]$. Mais on peut voir dans la preuve du théorème 3.10 de \cite{Izq1} que l'on a en fait uniquement besoin d'un accouplement $\mathbb{Z}(1) \otimes^{\mathbf{L}} \mathbb{Q}/\mathbb{Z}(1) \rightarrow \mathbb{Q}/\mathbb{Z}(2)$. Et un tel accouplement existe bien puisque, pour chaque $n \geq 1$, d'après la proposition 1 de \cite{Kahn}, on a un morphisme dans la catégorie dérivée:
$$\mathbb{Z}(1) \otimes^{\mathbf{L}} \mathbb{Z}/n\mathbb{Z}(1) \cong (\mathbb{G}_m \otimes^{\mathbf{L}} \mathbb{G}_m)[-2] \otimes^{\mathbf{L}} \mathbb{Z}/n\mathbb{Z} \cong \mathbb{Z}/n\mathbb{Z}(2).$$
Une fois que l'on dispose de cet accouplement, on montre (ii) avec le même type d'arguments que (i).
\end{itemize}
\end{proof}

\begin{remarque}\label{prob}
\begin{itemize}
\item[$\bullet$] Dans (ii), si $k$ est un corps fini, le groupe $\Sha^2(K_0,\mathbb{G}_m)_{\text{non}-p}$ est nul et donc le groupe $\Sha^2(K_0,T')$ est fini et on a un accouplement parfait de groupes finis:
$$\Sha^1(K_0,T)_{\text{non}-p} \times \Sha^2(K_0,T')_{\text{non}-p} \rightarrow \mathbb{Q}/\mathbb{Z}.$$
Pour prouver la nullité de $\Sha^2(K_0,\mathbb{G}_m)_{\text{non}-p}$, il suffit de procéder comme dans la preuve de la proposition 3.4(2) de \cite{HS1}, en remplaçant la dualité de Lichtenbaum par le théorème 0.11 de \cite{Saito}.
\item[$\bullet$] Il est aussi possible d'obtenir un théorème pour $d$ quelconque. Pour ce faire, il faut poser $\tilde{T} = \hat{T} \otimes^{\mathbf{L}} \mathbb{Z}(d+1)$, comme dans la section 3 de \cite{Izq1}. On peut alors obtenir des accouplements parfaits de groupes finis:
\begin{gather*}
\Sha^{d+2}(K_0,\tilde{T})_{\text{non}-p} \times \overline{\Sha^2(K_0,T)}_{\text{non}-p} \rightarrow \mathbb{Q}/\mathbb{Z}\\
\Sha^1(K_0,T)_{\text{non}-p} \times \overline{\Sha^{d+3}(K_0,\tilde{T})}_{\text{non}-p} \rightarrow \mathbb{Q}/\mathbb{Z}.
\end{gather*}
La preuve est identique à celle du théorème 3.10 \cite{Izq1}.
\end{itemize}
\end{remarque}

Pour les applications arithmétiques (en particulier le théorème \ref{PLGC((x,y))}), il sera utile d'avoir aussi un théorème de dualité pour certains complexes de tores à deux termes:

\begin{theorem}\label{multC((x,y))}
Supposons $d=-1$ (par exemple $k$ séparablement clos). Soit $G$ un groupe de type multiplicatif sur $K$. Soit $\rho: T_1 \rightarrow T_2$ un morphisme de tores algébriques sur $K$ à noyau fini. Soient $G$ le complexe $[T_1 \rightarrow T_2]$ où $T_1$ est en degré -1 et $T_2$ en degré 0, et $\tilde{G}$ le complexe  $[\hat{T}_2 \rightarrow \hat{T}_1]$ où $\hat{T}_2$ est placé en degré -1 et $\hat{T}_1$ en degré 0. On a alors un accouplement parfait de groupes finis:
$$\Sha^1(K_0,G)_{\text{non}-p} \times \overline{\Sha^1(K_0,\tilde{G})}_{\text{non}-p} \rightarrow \mathbb{Q}/\mathbb{Z}.$$
\end{theorem}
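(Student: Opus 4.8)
La stratégie consiste à se ramener, par dévissage, aux dualités de Poitou--Tate déjà établies pour les modules finis (théorème \ref{PTC((x,t))}) et pour les tores (théorème \ref{PTC((x,t))tore}(i)), en suivant de près le schéma de preuve de ce dernier, lui-même calqué sur les théorèmes 7.1 et 7.2 de \cite{CTH}.

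On pose $\mu = \text{Ker}\,\rho$, un $K_0$-groupe fini (d'ordre premier à $p$ une fois écartée la partie $p$-primaire), et $T_3 = \text{Coker}\,\rho$, un $K_0$-tore. Ainsi $G$ s'insère dans la catégorie dérivée dans un triangle distingué $\mu[1] \to G \to T_3 \xrightarrow{+1} \mu[2]$, et son dual dans le triangle miroir $\hat{T}_3[1] \to \tilde{G} \to \mu^* \xrightarrow{+1} \hat{T}_3[2]$, où $\mu^* = \underline{\text{Hom}}(\mu,\mathbb{G}_m)$ est le dual de Cartier de $\mu$; de plus le cup-produit $G \otimes^{\mathbf{L}} \tilde{G} \to \mathbb{G}_m[1]$ est compatible, à travers ces deux triangles, avec l'accouplement de Cartier $\mu \otimes^{\mathbf{L}} \mu^* \to \mathbb{G}_m$ et avec l'accouplement d'évaluation $T_3 \otimes^{\mathbf{L}} \hat{T}_3 \to \mathbb{G}_m$. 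La suite exacte longue associée au premier triangle (et un argument standard de limite sur les ouverts) montre que $\Sha^1(K_0,G)_{\text{non}-p}$ est coincé, dans une suite exacte, entre $\Sha^2(K_0,\mu)_{\text{non}-p}$ et $\Sha^1(K_0,T_3)_{\text{non}-p}$, qui sont finis; il est donc fini. En revanche, $\Sha^1(K_0,\tilde{G})$ fait intervenir $\Sha^2(K_0,\hat{T}_3)$, qui n'est fini que modulo son sous-groupe divisible maximal, d'où la nécessité du quotient $\overline{\Sha^1(K_0,\tilde{G})}$ dans l'énoncé.

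On fixe alors un nombre premier $\ell \neq p$ et on travaille $\ell$-primairement, toutes les étapes étant parallèles à la preuve du théorème \ref{PTC((x,t))tore}(i). D'abord, on établit la dualité locale en chaque $v \in X_0^{(1)}$, à savoir un accouplement parfait de groupes finis entre $H^1(K_{0,v},G)\{\ell\}$ et la complétion $\ell$-adique de $H^0(K_{0,v},\tilde{G})\{\ell\}$ (et, plus généralement, en degrés complémentaires), obtenu à partir des dualités locales sur $K_{0,v}$ pour $\mu$ (dualité des modules finis) et pour $T_3$ (comme dans la proposition 3.4 de \cite{CTH}) par chasse au diagramme le long des deux triangles. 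Ensuite, on étend $G$ en un modèle $\mathcal{G} = [\mathcal{T}_1 \to \mathcal{T}_2]$ sur un ouvert non vide $U$ de $X_0$, avec $\mathcal{T}_1 \to \mathcal{T}_2$ à noyau fini étale, et l'on déduit du théorème \ref{AVdquelc} appliqué à $\mu$, de son analogue pour les tores (celui qui intervient dans la preuve du théorème \ref{PTC((x,t))tore}) et des deux triangles que l'accouplement naturel
$$H^1(V,\tilde{\mathcal{G}}) \times H^1_c(V,\mathcal{G}) \rightarrow H^3_c(V,\mathbb{G}_m) = \mathbb{Q}/\mathbb{Z}$$
induit un accouplement parfait de groupes finis sur les parties $\ell$-primaires convenablement complétées et quotientées par leurs sous-groupes divisibles maximaux, pour tout ouvert non vide $V$ de $U$. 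On injecte enfin ces accouplements dans la suite exacte de localisation (proposition 3.1.(1) de \cite{HS1}), exactement comme dans la preuve du théorème \ref{PTC((x,t))}, pour obtenir que $D^1_{sh}(V,\tilde{\mathcal{G}})\{\ell\}$ et $\overline{\mathcal{D}^1(V,\mathcal{G})\{\ell\}}$ sont duaux l'un de l'autre; on vérifie que $H^1(V,\tilde{\mathcal{G}}) \to H^1(K_0,\tilde{G})$ est injectif pour $V$ assez petit (dévissage à $\hat{T}_3$ et $\mu^*$, comme dans le lemme 7.3 de \cite{CTH}), d'où $D^1_{sh}(V,\tilde{\mathcal{G}}) = \Sha^1(K_0,\tilde{G})$, et que $\mathcal{D}^1(V,\mathcal{G})\{\ell\}$ se stabilise en $\Sha^1(K_0,G)\{\ell\}$ lorsque $V$ rétrécit (lemme 3.7 de \cite{HS1} appliqué aux morceaux). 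Un passage à la limite sur $V$ puis un recollement sur les $\ell \neq p$ achèvent la preuve.

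Le point le plus délicat est la première étape, conjointement à la gestion des complétions $\ell$-adiques et des sous-groupes divisibles maximaux : comme $H^0(K_{0,v},G)$ et $H^0(K_{0,v},\tilde{G})$ sont de type fini mais non finis, il faut vérifier que le dévissage le long de $\mu[1] \to G \to T_3$ commute aux foncteurs $(\cdot)^{(\ell)}$ et $\overline{(\cdot)}$, de sorte que le lemme des cinq s'applique encore; le même soin est requis à l'étape globale pour garantir que les groupes de cohomologie de $\mathcal{G}$ et $\tilde{\mathcal{G}}$ sur $V$ sont finis, ou de type cofini, après localisation en $\ell$. Cela acquis, le reste n'est qu'une transcription des arguments de \cite{CTH} et \cite{Izq1}.
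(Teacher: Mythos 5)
Votre démonstration suit pour l'essentiel le même chemin que celle du texte (elle-même esquissée par renvoi au corollaire 4.17 de \cite{Izq0}) : extension de $\rho$ en un modèle $[\mathcal{T}_1\rightarrow\mathcal{T}_2]$ sur un ouvert, dualité globale pour les complexes déduite du théorème \ref{AVdquelc} par dévissage le long des triangles $\mu[1]\rightarrow G\rightarrow T_3$ et $\hat{T}_3[1]\rightarrow\tilde{G}\rightarrow\mu^*$, suite de localisation, identification des groupes $D^1_{sh}$ et $\mathcal{D}^1$ aux groupes de Tate--Shafarevich, puis passage à la limite sur les ouverts et sur $\ell$. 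La seule divergence est cosmétique : le texte place le noyau $D^1_{sh}$ du côté de $\mathcal{G}$ et l'image $\mathcal{D}^1$ du côté de $\tilde{\mathcal{G}}$ alors que vous faites l'inverse, mais les deux répartitions conduisent au même énoncé, et vous identifiez correctement les points délicats (finitude de $\Sha^1(K_0,G)$ via le noyau fini de $\rho$, gestion des complétés $\ell$-adiques et des sous-groupes divisibles).
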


\begin{proof} 
La preuve est tout à fait analogue à celle du corollaire 4.17 de \cite{Izq0}, et les idées sont similaires à celles du théorème \ref{PTC((x,t))tore}. On se contente donc d'esquisser la preuve, en supposant (pour simplifier) que $p=0$. \\Soit $U$ un ouvert non vide de $X_0$ sur lequel $T_1$ (resp. $T_2$) s'étend en un tore $\mathcal{T}_1$ (resp. $\mathcal{T}_2$) et $\rho$ s'étend en un morphisme $\mathcal{T}_1 \rightarrow \mathcal{T}_2$, aussi noté $\rho$. Soient $\mathcal{G}$ le complexe $ [\mathcal{T}_1 \rightarrow \mathcal{T}_2]$ et $\tilde{\mathcal{G}}$ le complexe $[\hat{\mathcal{T}}_2 \rightarrow \hat{\mathcal{T}}_1]$. Fixons $\ell$ un nombre premier. En utilisant le théorème \ref{AVdquelc}, on peut alors montrer qu'il y a un accouplement parfait de groupes finis:
\begin{equation} \label{sharp}
H^1(U,\mathcal{G})\{\ell\}^{(\ell)} \times H^1_c(U,\tilde{\mathcal{G}})^{(\ell)}\{\ell\} \rightarrow \mathbb{Q}/\mathbb{Z}.\tag{$\sharp$}
\end{equation}
Introduisons maintenant les groupes:
\begin{gather*}
\mathcal{D}^1(U,\tilde{\mathcal{G}}) = \text{Im} \left( H^1_c(U,\tilde{\mathcal{G}}) \rightarrow H^1(K_0,\tilde{G}) \right),\\
D_{sh}^1(U,\mathcal{G}) = \text{Ker} \left( H^1(U,\mathcal{G}) \rightarrow \prod_{v \in X_0^{(1)}} H^1(K_{0,v},G) \right).
\end{gather*}
En combinant la dualité parfaite (\ref{sharp}) avec une dualité locale, on montre que l'on a un morphisme surjectif à noyau divisible $D_{sh}^1(U,\mathcal{G})\{\ell\} \rightarrow \overline{\mathcal{D}^1(U,\tilde{\mathcal{G}})}\{\ell\}^D$. Or on peut prouver que, si $U$ est suffisamment petit, on a $\mathcal{D}^1(U,\tilde{\mathcal{G}}\{\ell\} = \Sha^1(K_0,\tilde{G})\{\ell\}$. Du coup, en passant à la limite sur $U$, on obtient un isomorphisme:
$$\overline{\Sha^1(K_0,G)\{\ell\}} \cong \overline{\Sha^1(K_0,\tilde{G})\{\ell\}}.$$
Pour conclure, il suffit de remarquer que le groupe $\Sha^1(K_0,G)\{\ell\}$ est fini (car $\rho$ est à noyau fini).
\end{proof}

\section{Applications arithmétiques}\label{secappdim2}

\hspace{4ex} On suppose toujours que $k$ est un corps $(p,d)$-bon pour la dualité et on rappelle que $R_0$ désigne une $k$-algèbre commutative, locale, géométriquement intègre (ie $R_0 \otimes_k k^s$ est intègre), normale, hensélienne, excellente, de dimension 2 et de corps résiduel $k$. On note toujours $K_0$ son corps des fractions, $\mathfrak{m}_0$ son idéal maximal, $\mathcal{X}_0$ le schéma $ \text{Spec} \; R_0$ et $X_0$ le schéma $\mathcal{X}_0 \setminus \{\mathfrak{m}_0\}$. Le but de cette section est d'utiliser les théorèmes de dualité de type Poitou-Tate de la section précédente pour étudier le principe local-global et l'approximation faible sur $K_0$.

\subsection{Principe local-global dans le cas $k=k^s$ et $\text{Car}(k)=0$}\label{k=k^s}

\hspace{4ex} Supposons que $k$ soit algébriquement clos de caractéristique 0. En particulier, $k$ est un corps $(-1,0)$-bon pour la dualité. Soient $H$ un $K_0$-groupe linéaire réductif connexe et $Y$ un espace principal homogène sous $H$ tel que, pour chaque $v \in X_0^{(1)}$, on a $Y(K_{0,v}) \neq \emptyset$. D'après la preuve de la proposition \ref{G_m}(ii), on a une suite exacte: 
\begin{equation}\label{flat}
\text{Br} K_0 \rightarrow \bigoplus_{v \in X_0^{(1)}} \text{Br} \; K_{0,v} \overset{\theta}{\rightarrow} \mathbb{Q}/\mathbb{Z} \rightarrow 0. \tag{$\flat$}
\end{equation}
En considérant un modèle géométriquement intègre $\mathcal{Y}$ de $Y$ sur un ouvert non vide $U$ de $X_0$, on peut définir un accouplement analogue à l'accouplement de Brauer-Manin pour les corps de nombres:
\begin{align*}
[.,.]:Y(\mathbb{A}_{K_0}) \times \text{Br}(Y) & \rightarrow \mathbb{Q}/\mathbb{Z}\\
\left((P_v)_{v}, \alpha \right) & \mapsto \theta \left((\alpha(P_v))_v \right), 
\end{align*}
où:
$$Y(\mathbb{A}_{K_0}) = \varinjlim_{V \subseteq U} \prod_{v \in V^{(1)}} \mathcal{Y}(\mathcal{O}_v) \times \prod_{v \in X_0\setminus V} Y(K_{0,v}).$$
Ici, $V$ décrit les ouverts non vides de $U$ et $\mathcal{O}_v$ désigne le complété de l'anneau local de $X_0$ en $v$. Notons:
$$\Brusse(Y)=\text{Ker}(\text{Br}_{\text{al}}(Y) \rightarrow \prod_{v\in X_0^{(1)}}\text{Br}_{\text{al}}(Y \times_{K_0} {K_{0,v}})).$$
Toujours en utilisant la suite exacte (\ref{flat}), on remarque que $[.,.]$ induit un accouplement:
$$[.,.]: Y(\mathbb{A}_{K_0}) \times \Brusse(Y) \rightarrow \mathbb{Q}/\mathbb{Z},$$
et que $Y(K_0)$ est contenu dans l'orthogonal de $\Brusse (Y)$ dans $Y(\mathbb{A}_{K_0})$: cela définit une obstruction au principe local-global pour $Y$, que l'on appellera \textbf{obstruction de Brauer-Manin associée à $\Brusse(Y)$}. \\

\hspace{4ex} On remarque que, pour $\alpha \in \Brusse(Y)$, la valeur de $[(P_v),\alpha]$ ne dépend pas du choix du point adélique $(P_v) \in Y(\mathbb{A}_{K_0})$. Comme on dispose d'un isomorphisme canonique $\Brusse(Y) \rightarrow \Brusse(H)$ (voir le lemme 5.2(iii) de \cite{BVH}), cela permet de définir un accouplement:
\begin{align*}
\text{BM}: \Sha^1(H) \times \Brusse(H) & \rightarrow \mathbb{Q}/\mathbb{Z}\\
([Y],\alpha) &\mapsto [(P_v),\alpha]
\end{align*}
où $(P_v) \in Y(\mathbb{A}_{K_0})$ est un point adélique quelconque.\\

\hspace{4ex} Par ailleurs, notons $H^{ss}$ le sous-groupe dérivé de $H$ et $H^{sc}$ son revêtement universel, qui est semi-simple simplement connexe. Soit $T$ un tore maximal de $H$. Notons $T^{(sc)}$ l'image réciproque de $T$ par le morphisme composé $\rho: H^{sc} \rightarrow H^{ss} \rightarrow H$ et $G = [T^{(sc)} \rightarrow T]$. Comme dans la section 2 de l'article \cite{Bor} de Borovoi, pour $L$ une extension de $K_0$, on définit la cohomologie galoisienne abélienne de $H$ par $H^r_{\text{ab}}(L,H) = H^r(L,G)$. Elle est munie de morphismes d'abélianisation $\text{ab}^1_L: H^1(L,H) \rightarrow H^1_{\text{ab}}(L,H)$ (voir section 3 de \cite{Bor}). 

\begin{proposition}\label{abbij}
Pour chaque $v \in X_0^{(1)}$, le morphisme $\text{ab}^1_{K_{0,v}}$ est bijectif. Il en est de même du morphisme $\text{ab}^1_{K_0}$.
\end{proposition}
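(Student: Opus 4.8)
Le plan est de déduire l'énoncé d'un critère général de Borovoi : pour un groupe réductif connexe $H$ sur un corps $L$ de caractéristique $0$, le morphisme d'abélianisation $\text{ab}^1_L : H^1(L,H) \to H^1_{\text{ab}}(L,H)$ est bijectif dès que (a) $L$ est de dimension cohomologique $\le 2$, et (b) la conjecture de Serre II vaut pour $L$, c'est-à-dire que $H^1(L,G')=1$ pour tout $L$-groupe semi-simple simplement connexe $G'$. En effet, d'après \cite{Bor}, la condition (a) entraîne déjà la surjectivité de $\text{ab}^1_L$ ; et les fibres de $\text{ab}^1_L$ sont, toujours d'après \cite{Bor}, contrôlées par les images des morphismes $H^1(L,({}_c H)^{sc}) \to H^1(L,{}_c H)$, où ${}_c H$ parcourt certaines formes intérieures de $H$, de sorte que (b) — appliquée aux groupes semi-simples simplement connexes $({}_c H)^{sc}$ — réduit chaque fibre à un point, d'où l'injectivité. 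Il restera donc à vérifier (a) et (b) pour $L = K_0$ et pour $L = K_{0,v}$, $v \in X_0^{(1)}$.

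Pour $L = K_0$, cela résulte immédiatement de ce qui a été rappelé : le corps $K_0$ est de dimension cohomologique $2$ et la conjecture de Serre II vaut pour $K_0$ d'après le théorème 1.4 de \cite{CGR} ; on en déduit que $\text{ab}^1_{K_0}$ est bijectif. Pour $L = K_{0,v}$, on remarque d'abord que $K_{0,v}$ est complet pour une valuation discrète dont le corps résiduel $k(v)$ est le corps des fractions d'un anneau de valuation discrète hensélien de corps résiduel $k$ ; comme $k$ est algébriquement clos de caractéristique $0$, le corps $k(v)$ est parfait de dimension cohomologique $1$, et donc $K_{0,v}$ est de dimension cohomologique $\le 2$. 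Pour (b), la conjecture de Serre II vaut trivialement pour $k(v)$ (corps parfait de dimension cohomologique $\le 1$), et elle vaut alors aussi pour le corps complet $K_{0,v}$, qui relève encore du cadre de \cite{CGR} ; le critère précédent donne alors la bijectivité de $\text{ab}^1_{K_{0,v}}$.

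La principale difficulté est la validité de la conjecture de Serre II pour les corps $K_0$ et $K_{0,v}$ : c'est un énoncé profond — c'est d'ailleurs là qu'interviennent de manière essentielle les hypothèses sur $k$ (algébriquement clos de caractéristique $0$) — mais il est disponible dans \cite{CGR}. Le reste n'est qu'une application formelle du formalisme d'abélianisation de Borovoi, exactement comme dans les énoncés analogues de \cite{HS1} et \cite{CTH}.
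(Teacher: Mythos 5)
Your proof follows essentially the same route as the paper: one verifies that $K_0$ and the $K_{0,v}$ are of characteristic $0$, of cohomological dimension $2$ and satisfy Serre's conjecture II (this is exactly the content of Theorems 1.4 and 1.5 of \cite{CGR}, the latter covering the complete discretely valued fields $K_{0,v}$ directly, so your hand verification for $K_{0,v}$ can be replaced by a citation), and then one applies the abelianization formalism, with injectivity obtained from Borovoi's description of the fibres of $\text{ab}^1$ just as you describe. The only imprecision is your attribution of the surjectivity criterion to \cite{Bor}: Borovoi's memoir establishes surjectivity only over local and global fields, and for a general field of cohomological dimension $\le 2$ in characteristic $0$ the paper instead invokes Theorem 5.1(i) and Example 5.4(vi) of \cite{GA} (resting ultimately on Douai's neutrality theorem) — the statement you use is correct, but it is not where you say it is.
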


\begin{proof}
D'après les théorèmes 1.4 et 1.5 de \cite{CGR}, $K_0$ et $K_{0,v}$ sont des corps de caractéristique 0, de dimension cohomologique 2, tels que indice et exposant coïncident pour les algèbres simples centrales, et sur lesquels la conjecture de Serre II vaut. Il suffit donc de procéder comme dans le corollaire 5.4.1 de \cite{Bor} pour obtenir l'injectivité et d'invoquer le théorème 5.1(i) et l'exemple 5.4(vi) de \cite{GA} pour obtenir la surjectivité.
\end{proof}

\hspace{4ex}  Le noyau du morphisme $T^{(sc)} \rightarrow T$ étant fini, le théorème \ref{multC((x,y))} fournit un accouplement parfait de groupes finis de type Poitou-Tate:
$$\text{PT}: \Sha^1(K_0,G) \times \overline{\Sha^1(K_0,\tilde{G})} \rightarrow \mathbb{Q}/\mathbb{Z},$$
où $\tilde{G}$ est le complexe $[\hat{T}_2 \rightarrow \hat{T}_1]$ dans lequel $\hat{T}_2$ est placé en degré -1 et $\hat{T}_1$ en degré 0. Nous disposons d'isomorphismes permettant de comparer cet accouplement à l'accouplement de Brauer-Manin:
\begin{itemize}
\item[$\bullet$] d'après la proposition \ref{abbij}, les morphismes d'abélianisation induisent une bijection $\text{ab}^1: \Sha^1(K_0,H) \rightarrow \Sha^1(K_0,G)$,
\item[$\bullet$] d'après le corollaire 2.20 et le théorème 4.8 de \cite{BVH}, les groupes $\text{Br}_{\text{al}}(H)$ et $H^1(K_0,\tilde{G})$ sont isomorphes, ce qui induit un isomorphisme $B: \Brusse(H) \rightarrow \Sha^1(K_0,\tilde{G})$.
\end{itemize}
On peut alors montrer, comme dans la proposition 2.5 de \cite{Izq3}, que l'on a un diagramme commutatif au signe près:\\
\centerline{\xymatrix{
\text{BM}: & \Sha^1(K_0,H) \ar[d]^{\text{ab}^1} \ar@{}[r]|{\times}   & \overline{\Brusse(H)}\ar[d]^{B} \ar[r] & \mathbb{Q}/\mathbb{Z}\ar@{=}[d]\\
\text{PT}: &\Sha^1(K_0,G) \ar@{}[r]|{\times}  & \overline{\Sha^1(K_0,\tilde{G})} \ar[r] & \mathbb{Q}/\mathbb{Z}.
}} 
On en déduit le théorème:

\begin{theorem}\label{PLGC((x,y))}
Supposons que $k$ soit algébriquement clos de caractéristique 0. Soit $H$ un groupe réductif connexe sur $K_0$. L'accouplement de Brauer-Manin: $$\Sha^1(K_0,H) \times \overline{\Brusse(H)} \rightarrow \mathbb{Q}/\mathbb{Z}$$
induit une bijection $\Sha^1(K_0,H) \cong \overline{\Brusse(H)}^D$. En particulier, l'obstruction de Brauer-Manin associée à $\Brusse(Y)$ est la seule obstruction pour les espaces principaux homogènes sous $H$.
\end{theorem}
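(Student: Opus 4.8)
The plan is to deduce the theorem formally from the Poitou--Tate duality of Theorem~\ref{multC((x,y))} through the commutative-up-to-sign square relating the Brauer--Manin and Poitou--Tate pairings displayed just above. First I would observe that, since the kernel of $\rho\colon T^{(sc)}\to T$ is finite and here $p=0$ (so that $\Sha^1(K_0,G)_{\text{non}-p}=\Sha^1(K_0,G)$, and similarly for $\tilde G$), Theorem~\ref{multC((x,y))} applies: the pairing $\text{PT}\colon\Sha^1(K_0,G)\times\overline{\Sha^1(K_0,\tilde G)}\to\mathbb{Q}/\mathbb{Z}$ is a perfect pairing of finite groups, so in particular both groups are finite and $\text{PT}$ induces a bijection $\Sha^1(K_0,G)\xrightarrow{\sim}\overline{\Sha^1(K_0,\tilde G)}^D$. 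Next, Proposition~\ref{abbij} gives that the abelianization map $\text{ab}^1\colon\Sha^1(K_0,H)\to\Sha^1(K_0,G)$ is a bijection --- this both endows the a priori pointed set $\Sha^1(K_0,H)$ with a group structure and shows it is finite --- while the isomorphism $\text{Br}_{\text{al}}(H)\cong H^1(K_0,\tilde G)$ of \cite{BVH} restricts to an isomorphism $B\colon\Brusse(H)\xrightarrow{\sim}\Sha^1(K_0,\tilde G)$, hence induces an isomorphism $\overline{B}\colon\overline{\Brusse(H)}\xrightarrow{\sim}\overline{\Sha^1(K_0,\tilde G)}$ on the quotients by the maximal divisible subgroups.

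Feeding $\text{ab}^1$ and $\overline{B}$ into the said commutative square as the vertical arrows and noting that a global sign does not affect perfectness, I conclude at once that $\text{BM}\colon\Sha^1(K_0,H)\times\overline{\Brusse(H)}\to\mathbb{Q}/\mathbb{Z}$ is a perfect pairing of finite groups; in particular it induces the asserted bijection $\Sha^1(K_0,H)\xrightarrow{\sim}\overline{\Brusse(H)}^D$.

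For the ``only obstruction'' statement, I would let $Y$ be a principal homogeneous space under $H$ whose Brauer--Manin obstruction associated to $\Brusse(Y)$ vanishes, i.e. for which there exists an adelic point $(P_v)\in Y(\mathbb{A}_{K_0})$ orthogonal to $\Brusse(Y)$. The mere existence of such a point gives $Y(K_{0,v})\neq\emptyset$ for every $v\in X_0^{(1)}$, so that $[Y]\in\Sha^1(K_0,H)$; and since $[(P_v),\alpha]$ is independent of the adelic point when $\alpha\in\Brusse(Y)$ (as recalled above), the orthogonality together with the canonical isomorphism $\Brusse(Y)\cong\Brusse(H)$ of lemma 5.2(iii) of \cite{BVH} yields $\text{BM}([Y],\alpha)=0$ for every $\alpha\in\Brusse(H)$. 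As $\Sha^1(K_0,H)$ is finite, the pairing automatically kills the maximal divisible subgroup of $\Brusse(H)$, so $[Y]$ lies in the kernel of the injection $\Sha^1(K_0,H)\hookrightarrow\overline{\Brusse(H)}^D$ obtained above; therefore $[Y]=0$, $Y$ is the trivial torsor, and $Y(K_0)\neq\emptyset$.

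Essentially all of the substance has already been absorbed into the inputs --- Theorem~\ref{multC((x,y))} (which in turn rests on the Artin--Verdier duality of Section~\ref{secAVdim2}), Proposition~\ref{abbij}, and the comparison diagram --- so what remains is a short diagram chase. The one point that demands genuine care, and which I expect to be the main obstacle, is the commutativity up to sign of the square comparing $\text{BM}$ and $\text{PT}$ through $\text{ab}^1$ and $B$; this is handled as in Proposition~2.5 of \cite{Izq3}, by unwinding the definitions of the two pairings and tracking the cup-products and the abelianization maps.
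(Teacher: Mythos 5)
Your proposal is correct and follows essentially the same route as the paper: the theorem is deduced from the perfect pairing of Theorem~\ref{multC((x,y))} applied to $G=[T^{(sc)}\to T]$, the bijectivity of $\text{ab}^1$ on $\Sha^1$ (Proposition~\ref{abbij}), the identification $B\colon \Brusse(H)\cong\Sha^1(K_0,\tilde G)$ from \cite{BVH}, and the sign-commuting comparison square, exactly as in the discussion preceding the statement. The only delicate point, as you note, is the commutativity up to sign of that square, which the paper likewise delegates to the argument of la proposition 2.5 de \cite{Izq3}.
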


\hspace{4ex} Cela répond affirmativement à la question de J.-L. Colliot-Thélène, R. Parimala et V. Suresh posée à la toute fin de l'article \cite{CTPS} dans le cas (b) lorsque $k$ est de caractéristique nulle. Plus précisément, si $Y$ est un espace principal homogène sous $H$ pour lequel il existe une famille $(P_v) \in \prod_{v \in \tilde{\mathcal{X}}_0} Y(K_{0,v})$ telle que pour tout $A \in \Brusse(Y)$:
$$(\partial_v(A(P_v)))_{v \in \tilde{\mathcal{X}}_0^{(1)}} \in \text{Ker}\left(\bigoplus_{v \in \tilde{\mathcal{X}}_0^{(1)}} H^1(k(v),\mathbb{Q}/\mathbb{Z}(1)) \overset{\partial_{v,w}}{\rightarrow} \bigoplus_{w \in \tilde{\mathcal{X}}_0^{(2)}}\mathbb{Q}/\mathbb{Z}\right),$$
alors $Y$ a un point rationnel. Ici, $ \tilde{\mathcal{X}}_0$ désigne une désingularisation de $\mathcal{X}_0$ comme dans la section \ref{desingu} et les $\partial_v$ et $\partial_{v,w}$ désignent les applications résidus.

\subsection{Principe local-global dans le cas général}

\hspace{4ex} On ne fait plus d'hypothèse sur $d$ ni sur $p$. Ainsi, le corps $k$ peut être séparablement clos de caractéristique positive, fini, $\ell$-adique, ou encore $\mathbb{C}((t))$. Soient $T$ un tore sur $K_0$ et $Y$ un espace principal homogène sous $T$ tel que $Y(\mathbb{A}_{K_0}) \neq \emptyset$ et qui devient trivial sur une extension finie de $K_0$ de degré non divisible par $p$. La théorie de Bloch-Ogus (rappelée par exemple dans le paragraphe 2 de \cite{CTPS}) fournit un complexe:
\small
\begin{equation}\label{flatflat}
H^{d+3}(K_0,\mathbb{Q}/\mathbb{Z}(d+2))_{\text{non}-p} \rightarrow \bigoplus_{v \in X_0^{(1)}} H^{d+3}(K_{0,v},\mathbb{Q}/\mathbb{Z}(d+2))_{\text{non}-p}\overset{\vartheta}{\rightarrow} H^{d+1}(k,\mathbb{Q}/\mathbb{Z}(d))_{\text{non}-p}. \tag{$\flat \flat$}
\end{equation}
\normalsize

Comme dans le paragraphe précédent, en identifiant les groupes $H^{d+1}(k,\mathbb{Q}/\mathbb{Z}(d))_{\text{non}-p}$ et $(\mathbb{Q}/\mathbb{Z})_{\text{non}-p}$, on peut alors définir un accouplement à la Brauer-Manin:
\begin{align*}
[.,.]: Y(\mathbb{A}_{K_0}) \times H^{d+3}(Y,\mathbb{Q}/\mathbb{Z}(d+2))_{\text{non}-p} &\rightarrow \mathbb{Q}/\mathbb{Z}\\
\left( (P_v)_v, \alpha \right) & \mapsto \vartheta \left( (\alpha (P_v))_v \right).
\end{align*}
Posons:
\begin{align*}
H^{d+3}_{\text{lc}}(Y,\mathbb{Q}/\mathbb{Z}(d+2)) &= \text{Ker}(H^{d+3}(Y,\mathbb{Q}/\mathbb{Z}(d+2))/\text{Im}(H^{d+3}(K_0,\mathbb{Q}/\mathbb{Z}(d+2)))\\ & \rightarrow \prod_{v \in X_0^{(1)}} H^{d+3}(Y_{K_{0,v}},\mathbb{Q}/\mathbb{Z}(d+2))/\text{Im}(H^{d+3}(K_{0,v},\mathbb{Q}/\mathbb{Z}(d+2)))
\end{align*}
où $Y_{K_{0,v}}$ désigne $Y \times_{K_0} K_{0,v}$. En utilisant de nouveau le complexe (\ref{flatflat}), on remarque que l'accouplement $[.,.]$ induit un accouplement:
$$[.,.]: Y(\mathbb{A}_{K_0}) \times H^{d+3}_{\text{lc}}(Y,\mathbb{Q}/\mathbb{Z}(d+2))_{\text{non}-p} \rightarrow \mathbb{Q}/\mathbb{Z},$$
et que $Y(K)$ est contenu dans l'orthogonal de $H^{d+3}_{\text{lc}}(Y,\mathbb{Q}/\mathbb{Z}(d+2))_{\text{non}-p}$. De plus, pour $\alpha \in H^{d+3}_{\text{lc}}(Y,\mathbb{Q}/\mathbb{Z}(d+2))_{\text{non}-p}$, la quantité $[(P_v),\alpha]$ est indépendante du choix du point adélique $(P_v) \in Y(\mathbb{A}_{K_0})$. L'accouplement $[.,.]$ induit donc un morphisme:
 $$\rho_Y: H^{d+3}_{\text{lc}}(Y,\mathbb{Q}/\mathbb{Z}(d+2))_{\text{non}-p} \rightarrow \mathbb{Q}/\mathbb{Z}.$$
 
\begin{proposition}
Soit:
$$PT: \Sha^1(K_0,T)_{\text{non}-p} \times \overline{\Sha^{d+3}(K_0,\tilde{T})}_{\text{non}-p} \rightarrow \mathbb{Q}/\mathbb{Z}$$
l'accouplement parfait de la remarque \ref{prob}. Il existe un morphisme $$\tau: \Sha^{d+3}(K_0,\tilde{T})_{\text{non}-p} \rightarrow H^{d+3}_{\text{lc}}(Y,\mathbb{Q}/\mathbb{Z}(d+2))_{\text{non}-p}$$ tel que l'on ait $\rho_Y \circ \tau = PT([Y],\cdot)$.
\end{proposition}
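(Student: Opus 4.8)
The plan is to imitate the construction of la proposition 2.5 de \cite{Izq3} (et des énoncés analogues de \cite{CTH} et du corollaire 4.17 de \cite{Izq0}), en remplaçant partout les coefficients $\mathbb{G}_m$ par des coefficients $\mathbb{Q}/\mathbb{Z}(d+2)$. Je commencerais par produire le morphisme $\tau$. Comme $Y$ devient trivial sur une extension finie de $K_0$ de degré non divisible par $p$, le changement de base $\overline{Y}=Y\times_{K_0}K_0^s$ est, en tant que $\overline{T}$-variété, isomorphe à $\overline{T}$; par conséquent, les classes des caractères canoniques fournissent un isomorphisme canonique de $\mathrm{Gal}(K_0^s/K_0)$-modules entre $H^1(\overline{Y},\mathbb{Q}/\mathbb{Z}(d+2))_{\text{non}-p}$ et $(\hat{T}\otimes\mathbb{Q}/\mathbb{Z}(d+1))_{\text{non}-p}$. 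En utilisant le triangle distingué associé à $\mathbb{Z}(d+1)\to\mathbb{Q}(d+1)\to\mathbb{Q}/\mathbb{Z}(d+1)$ et la nullité de $H^{i}(K_0,\mathbb{Q}(d+1))$ pour $i>d+1$, on identifie $\Sha^{d+3}(K_0,\tilde{T})_{\text{non}-p}$ au sous-groupe des classes localement triviales de $H^{d+2}(K_0,\hat{T}\otimes\mathbb{Q}/\mathbb{Z}(d+1))_{\text{non}-p}$; la suite exacte des termes de bas degré de la suite spectrale de Hochschild--Serre $H^p(K_0,H^q(\overline{Y},\mathbb{Q}/\mathbb{Z}(d+2)))\Rightarrow H^{p+q}(Y,\mathbb{Q}/\mathbb{Z}(d+2))$ fournit alors, après restriction à ce sous-groupe, un morphisme à valeurs dans $H^{d+3}(Y,\mathbb{Q}/\mathbb{Z}(d+2))/\mathrm{Im}\big(H^{d+3}(K_0,\mathbb{Q}/\mathbb{Z}(d+2))\big)$. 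En menant exactement la même construction sur chaque complété $K_{0,v}$ et en invoquant la trivialité locale des éléments de $\Sha^{d+3}$, on voit que la classe obtenue appartient à $H^{d+3}_{\mathrm{lc}}(Y,\mathbb{Q}/\mathbb{Z}(d+2))_{\text{non}-p}$: c'est le $\tau(\alpha)$ cherché.

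Il faut ensuite calculer $\rho_Y\circ\tau$. Comme $Y(\mathbb{A}_{K_0})\neq\emptyset$, choisissons $(P_v)\in Y(\mathbb{A}_{K_0})$; la classe $\tau(\alpha)$ étant localement constante, la quantité $\rho_Y(\tau(\alpha))=\vartheta\big((\tau(\alpha)(P_v))_v\big)$ ne dépend pas de ce choix. Pour chaque $v$, le point $P_v$ définit une section $s_v\colon\mathrm{Spec}\,K_{0,v}\to Y$, et par fonctorialité de la suite spectrale de Hochschild--Serre vis-à-vis de $s_v$, la classe $s_v^{*}\tau(\alpha)\in H^{d+3}(K_{0,v},\mathbb{Q}/\mathbb{Z}(d+2))$ est le cup-produit de la classe locale $[Y]_v\in H^1(K_{0,v},T)$ avec l'image $\overline{\alpha}_v\in H^{d+2}(K_{0,v},\hat{T}\otimes\mathbb{Q}/\mathbb{Z}(d+1))$ de $\alpha$, pris pour l'accouplement déduit de l'évaluation $\hat{T}\otimes T\to\mathbb{G}_m$; c'est exactement la composante en $v$ de l'accouplement local à partir duquel l'accouplement $PT$ est construit (comme dans la preuve du théorème 3.10 de \cite{Izq1}, cf.\ la remarque \ref{prob}). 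En appliquant $\vartheta$ et en comparant avec le complexe (\ref{flatflat}) et avec la manière dont $PT$ est assemblé à partir du théorème \ref{AVdquelc}, on obtient $\rho_Y(\tau(\alpha))=PT([Y],\alpha)$, à l'ambiguïté de signe habituelle près, qui est sans conséquence ici.

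Le principal obstacle sera le travail de nature homologique de la première étape: rendre suffisamment précis l'isomorphisme $H^1(\overline{Y},\mathbb{Q}/\mathbb{Z}(d+2))_{\text{non}-p}\cong(\hat{T}\otimes\mathbb{Q}/\mathbb{Z}(d+1))_{\text{non}-p}$ et le décalage issu du triangle (de sorte que le morphisme de bas degré de Hochschild--Serre aboutisse réellement en degré $d+3$ et corresponde à $\Sha^{d+3}(K_0,\tilde{T})_{\text{non}-p}$), contrôler les colonnes supérieures de la suite spectrale et la différentielle $d_2$ sur le sous-groupe des classes localement triviales, et enfin vérifier que la formule d'évaluation $s_v^{*}\tau(\alpha)=[Y]_v\cup\overline{\alpha}_v$ est compatible, au signe près, avec les cup-produits de la section \ref{secdualdim2} définissant $PT$. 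Ces points sont toutefois formels une fois explicités, et je les traiterais en suivant pas à pas la proposition 2.5 de \cite{Izq3} et \cite{CTH}, en y remplaçant $\mathbb{G}_m$ par $\mathbb{Q}/\mathbb{Z}(d+2)$.
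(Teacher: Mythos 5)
Votre démarche coïncide pour l'essentiel avec celle du texte: même suite spectrale de Hochschild--Serre, même identification $H^1(\overline{Y},\mathbb{Q}/\mathbb{Z}(d+2))_{\text{non}-p}\cong (\hat{T}\otimes\mathbb{Q}/\mathbb{Z}(d+1))_{\text{non}-p}$, même usage du triangle distingué et de l'annulation des groupes à coefficients dans $\hat{T}\otimes\mathbb{Q}(d+1)$ pour aboutir à $\Sha^{d+3}(K_0,\tilde{T})_{\text{non}-p}$, puis passage aux éléments localement triviaux; la vérification de $\rho_Y\circ\tau=PT([Y],\cdot)$ par évaluation aux points locaux et cup-produit est précisément celle de la partie 5 de \cite{HS1} à laquelle le texte renvoie. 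La preuve est donc correcte et suit la même approche que l'article.
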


\begin{proof}
La preuve est analogue à la partie 5 de \cite{HS1}. On se contente de rappeller brièvement la construction de $\tau$.\\
Notons $\overline{Y} = Y \times_{K_0} K_0^s$. La suite spectrale $$H^p(K_0,H^q(\overline{Y}, \mathbb{Q}/\mathbb{Z}(d+2))) \Rightarrow H^{p+q}(Y,\mathbb{Q}/\mathbb{Z}(d+2))$$ induit un morphisme:
$$H^{d+2}(K_0,H^1(\overline{Y}, \mathbb{Q}/\mathbb{Z}(d+2))) \rightarrow H^{d+3}(Y,\mathbb{Q}/\mathbb{Z}(d+2)).$$
Comme dans le lemme 5.2 de \cite{HS1}, on a un isomorphisme de modules galoisiens $H^1(\overline{Y},\mathbb{Q}/\mathbb{Z}(d+2))_{\text{non}-p} \cong \hat{T} \otimes \mathbb{Q}/\mathbb{Z}(d+1)_{\text{non}-p}$. On obtient donc un morphisme:
$$H^{d+2}(K_0,\hat{T} \otimes \mathbb{Q}/\mathbb{Z}(d+1))_{\text{non}-p} \rightarrow H^{d+3}(Y,\mathbb{Q}/\mathbb{Z}(d+2))_{\text{non}-p}.$$
D'après le lemme 3.1 de \cite{Izq1}, les groupes $H^{d+2}(K_0,\hat{T} \otimes \mathbb{Q}(d+1))$ et $H^{d+3}(K_0,\hat{T} \otimes \mathbb{Q}(d+1))$ sont triviaux. En exploitant le triangle distingué $$\hat{T} \otimes \mathbb{Z}(d+1) \rightarrow \hat{T} \otimes \mathbb{Q}(d+1) \rightarrow \hat{T} \otimes \mathbb{Q}/\mathbb{Z}(d+1) \rightarrow \hat{T} \otimes \mathbb{Z}(d+1)[1],$$
on montre alors que $H^{d+2}(K_0,\hat{T} \otimes \mathbb{Q}/\mathbb{Z}(d+1))_{\text{non}-p} \cong H^{d+3}(K_0,\tilde{T})_{\text{non}-p}$, ce qui fournit un morphisme:
$$ H^{d+3}(K_0,\tilde{T})_{\text{non}-p} \rightarrow H^{d+3}(Y,\mathbb{Q}/\mathbb{Z}(d+2))_{\text{non}-p}.$$
En passant aux éléments localement triviaux, on obtient un morphisme:
$$\Sha^{d+3}(K_0,\tilde{T})_{\text{non}-p} \rightarrow H^{d+3}_{\text{lc}}(Y,\mathbb{Q}/\mathbb{Z}(d+2))_{\text{non}-p}.$$
C'est le morphisme $\tau$.
\end{proof}
 
La proposition précédente entraîne alors le théorème suivant:

\begin{theorem}\label{local-global}
Soient $T$ un tore sur $K_0$ et $Y$ un espace principal homogène sous $T$ tel que $Y(\mathbb{A}_{K_0}) \neq \emptyset$ et qui devient trivial sur une extension finie de $K_0$ de degré non divisible par $p$. Si $\rho_Y$ est trivial, alors $Y(K_0) \neq \emptyset$.
\end{theorem}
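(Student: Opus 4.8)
Le plan est de déduire l'énoncé de la proposition qui précède et de la perfection de l'accouplement de Poitou--Tate de la remarque \ref{prob}, le cœur du travail ayant déjà été fait dans ces deux résultats. On commence par situer la classe $[Y] \in H^1(K_0,T)$: puisque $Y(\mathbb{A}_{K_0}) \neq \emptyset$, l'espace $Y$ a un point dans chaque $K_{0,v}$ pour $v \in X_0^{(1)}$, donc $[Y] \in \Sha^1(K_0,T)$; et comme $Y$ devient trivial sur une extension finie $L/K_0$ de degré $n$ non divisible par $p$, un argument de restriction--corestriction donne $n[Y] = 0$, de sorte que $[Y] \in \Sha^1(K_0,T)_{\text{non}-p}$.

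D'après la proposition précédente, on dispose d'un morphisme $\tau: \Sha^{d+3}(K_0,\tilde{T})_{\text{non}-p} \rightarrow H^{d+3}_{\text{lc}}(Y,\mathbb{Q}/\mathbb{Z}(d+2))_{\text{non}-p}$ vérifiant $\rho_Y \circ \tau = PT([Y],\cdot)$, cette dernière forme linéaire étant définie sur $\overline{\Sha^{d+3}(K_0,\tilde{T})}_{\text{non}-p}$ puis précomposée par la projection canonique. Si $\rho_Y$ est trivial, alors $\rho_Y \circ \tau = 0$; comme la projection $\Sha^{d+3}(K_0,\tilde{T})_{\text{non}-p} \rightarrow \overline{\Sha^{d+3}(K_0,\tilde{T})}_{\text{non}-p}$ est surjective, il vient $PT([Y],\cdot) = 0$. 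Or, d'après la remarque \ref{prob}, l'accouplement $PT: \Sha^1(K_0,T)_{\text{non}-p} \times \overline{\Sha^{d+3}(K_0,\tilde{T})}_{\text{non}-p} \rightarrow \mathbb{Q}/\mathbb{Z}$ est un accouplement parfait de groupes finis, donc le morphisme induit $\Sha^1(K_0,T)_{\text{non}-p} \rightarrow \overline{\Sha^{d+3}(K_0,\tilde{T})}_{\text{non}-p}^D$ est injectif; par conséquent $[Y] = 0$. Un espace principal homogène sous un tore dont la classe est triviale dans $H^1$ est le torseur trivial et possède donc un point rationnel, d'où $Y(K_0) \neq \emptyset$.

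L'argument est ainsi essentiellement formel une fois la proposition en main: tout le contenu géométrique réside dans la construction de $\tau$ (suite spectrale de Leray pour $Y \rightarrow \text{Spec}\, K_0$, identification $H^1(\overline{Y},\mathbb{Q}/\mathbb{Z}(d+2))_{\text{non}-p} \cong \hat{T} \otimes \mathbb{Q}/\mathbb{Z}(d+1)_{\text{non}-p}$, théorie de Bloch--Ogus) et dans la dualité de Poitou--Tate. Le seul point demandant encore un peu d'attention est la gestion des parties non-$p$ et des quotients $\overline{(\cdot)}$ par le sous-groupe divisible maximal: il faut en particulier la finitude des groupes de Tate-Shafarevich en jeu — assurée par la remarque \ref{prob} et le théorème \ref{PTC((x,t))tore} — pour que la perfection de l'accouplement $PT$ fournisse réellement l'injectivité de $[Y] \mapsto PT([Y],\cdot)$, donc la nullité de $[Y]$.
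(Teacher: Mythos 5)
Votre démonstration est correcte et suit exactement la déduction que le papier laisse implicite (le texte se contente d'affirmer que la proposition précédente entraîne le théorème) : localisation de $[Y]$ dans $\Sha^1(K_0,T)_{\text{non}-p}$ par restriction--corestriction, annulation de $PT([Y],\cdot)$ via $\tau$ et la surjectivité de la projection sur le quotient $\overline{(\cdot)}$, puis perfection de l'accouplement de la remarque \ref{prob} pour conclure $[Y]=0$. Rien à redire.
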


\hspace{4ex} Comme dans le paragraphe \ref{k=k^s}, en prenant $d=-1$, cela répond affirmativement à la question de J.-L. Colliot-Thélène, R. Parimala et V. Suresh posée à la toute fin de l'article \cite{CTPS} dans le cas (b) lorsque $k$ est de caractéristique quelconque.\\

\hspace{4ex} Pour terminer ce paragraphe, remarquons qu'en utilisant la remarque \ref{prob} et en procédant comme dans le corollaire 5.7 de \cite{HS1}, on peut obtenir la proposition:

\begin{proposition}\label{stabrat}
Supposons $k$ fini de caractéristique $p$ et considérons un tore stablement rationnel $T$ sur $K_0$. Soit $Y$ un espace principal homogène sous $T$ tel que $Y(\mathbb{A}_{K_0}) \neq \emptyset$ et qui devient trivial sur une extension finie de $K_0$ de degré non divisible par $p$. Alors $Y$ vérifie le principe local-global.
\end{proposition}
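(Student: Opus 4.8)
Le plan est de reprendre la démarche du corollaire 5.7 de \cite{HS1} et de ramener l'existence d'un point rationnel sur $Y$ à l'annulation de groupes de Tate-Shafarevich de $\mathbb{G}_m$ sur les extensions finies de $K_0$, annulation fournie par la remarque \ref{prob}. On observe d'abord que, comme $Y(\mathbb{A}_{K_0}) \neq \emptyset$, la classe $[Y] \in H^1(K_0,T)$ appartient à $\Sha^1(K_0,T)$; et comme $Y$ devient trivial sur une extension finie $K'/K_0$ de degré $m$ non divisible par $p$, un argument de restriction-corestriction donne $m\,[Y]=0$, de sorte que $[Y] \in \Sha^1(K_0,T)_{\text{non}-p}$. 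Il suffit donc d'établir que $\Sha^1(K_0,T)_{\text{non}-p}=0$.

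Pour cela, on exploite la rationalité stable de $T$. On part d'une résolution flasque $1 \to S \to P_1 \to T \to 1$, avec $P_1$ quasi-trivial et $S$ flasque. La classe flasque étant un invariant d'équivalence birationnelle stable et $T$ étant stablement rationnel, cette classe s'annule, de sorte que $S$ est facteur direct d'un tore quasi-trivial; quitte à multiplier la suite exacte par un tore quasi-trivial convenable, on peut donc supposer $S$ lui-même quasi-trivial. On dispose ainsi d'une suite exacte de $K_0$-tores $1 \to P_2 \to P_1 \to T \to 1$ à termes quasi-triviaux, disons $P_2 = \prod_i R_{L_i/K_0}\mathbb{G}_m$ pour certaines extensions finies $L_i/K_0$. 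Comme $H^1(K_0,P_1)=0$ (lemme de Shapiro et théorème de Hilbert 90), le morphisme de bord $\delta: H^1(K_0,T) \to H^2(K_0,P_2)$ est injectif, et il envoie $\Sha^1(K_0,T)$ dans $\Sha^2(K_0,P_2)$ puisqu'il commute à la localisation; d'où une injection $\Sha^1(K_0,T) \hookrightarrow \Sha^2(K_0,P_2)$. Le lemme de Shapiro et la compatibilité des places fournissent alors $\Sha^2(K_0,P_2) \cong \bigoplus_i \Sha^2(L_i,\mathbb{G}_m)$, les groupes de droite étant relatifs aux places de $X_{L_i}^{(1)}$.

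Or chaque $L_i$ est le corps des fractions de la normalisation de $R_0$ dans $L_i$, qui est encore une $k$-algèbre locale, normale, hensélienne, excellente, de dimension $2$ et de corps résiduel fini. La remarque \ref{prob}, dont le point essentiel (l'annulation de $\Sha^2(\mathbb{G}_m)_{\text{non}-p}$ via le théorème 0.11 de \cite{Saito}) vaut pour le corps des fractions d'un tel anneau, donne donc $\Sha^2(L_i,\mathbb{G}_m)_{\text{non}-p}=0$ pour tout $i$. On en déduit $\Sha^1(K_0,T)_{\text{non}-p}=0$, d'où $[Y]=0$, puis $Y(K_0) \neq \emptyset$.

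Le point le plus délicat est le passage de l'hypothèse de rationalité stable à la suite exacte $1 \to P_2 \to P_1 \to T \to 1$ à termes quasi-triviaux: il repose sur la caractérisation de Voskresenskii (et de Colliot-Thélène et Sansuc) des tores stablement rationnels par la nullité de la classe flasque, suivie d'une manipulation élémentaire pour rendre le noyau quasi-trivial. Le reste est formel une fois admise la remarque \ref{prob}, le seul soin à prendre étant de s'assurer que celle-ci (ou du moins l'annulation de $\Sha^2(\mathbb{G}_m)_{\text{non}-p}$) s'applique aux corps $L_i$ et pas seulement à $K_0$, ce qui est immédiat.
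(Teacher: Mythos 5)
Your proof is correct and rests on the same two pillars as the paper's: the Colliot-Thélène--Sansuc characterization of stably rational tori by resolutions with quasi-trivial tori (proposition 6 de \cite{CTS}) and the vanishing of $\Sha^2(\cdot,\mathbb{G}_m)_{\text{non}-p}$ over finite extensions of $K_0$ supplied by the remarque \ref{prob} (i.e.\ Saito's theorem). The packaging differs. The paper resolves the \emph{dual} torus $T'$ as $0 \to T' \to R \to S \to 0$ with $R,S$ quasi-trivial, deduces $\Sha^2(K_0,T')_{\text{non}-p}=0$ from the injection into $\Sha^2(K_0,R)_{\text{non}-p}=0$, and then transfers this back to $\Sha^1(K_0,T)_{\text{non}-p}=0$ via the Poitou--Tate pairing of the théorème \ref{PTC((x,t))tore}(ii) combined with the remarque \ref{prob}. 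You instead resolve $T$ itself, $1\to P_2\to P_1\to T\to 1$, and use the boundary map $H^1(K_0,T)\hookrightarrow H^2(K_0,P_2)$ (injective since $H^1(K_0,P_1)=0$) to embed $\Sha^1(K_0,T)$ directly into $\bigoplus_i\Sha^2(L_i,\mathbb{G}_m)$. The two resolutions are $\mathbb{Z}$-duals of one another, so the inputs are identical, but your route has the small advantage of bypassing the duality theorem \ref{PTC((x,t))tore} altogether, and you make explicit two points the paper leaves implicit: that $[Y]$ lies in $\Sha^1(K_0,T)_{\text{non}-p}$ (by restriction--corestriction), and that the Saito-type vanishing must be applied over the normalizations of $R_0$ in the fields $L_i$, not just over $K_0$.
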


\begin{proof}
Comme $T$ est stablement rationnel, il existe une résolution de $T$:
$$0 \rightarrow T' \rightarrow R \rightarrow S \rightarrow 0$$
où $R$ et $S$ sont des tores quasi-triviaux (proposition 6 de \cite{CTS}). Or la remarque \ref{prob} impose que $\Sha^2(K_0,R)_{\text{non}-p}$ est trivial. On en déduit que $\Sha^2(K_0,T')_{\text{non}-p} = 0$, et le théorème \ref{PTC((x,t))tore} permet de conclure que $\Sha^1(K_0,T)_{\text{non}-p}=0$.
\end{proof}

\subsection{Approximation faible}

\hspace{4ex} On suppose dans ce paragraphe que $p=0$. En particulier, $k$ est de caractéristique nulle. Pour $M$ un $\text{Gal}(K_0^s/K_0)$-module discret, on note $\Sha^{2}_{\omega}(K_0,M)$ l'ensemble des éléments de $H^2(K_0,M)$ dont la restriction à $H^2(K_{0,v}, M)$ est triviale pour presque tout $v \in X_0^{(1)}$. On rappelle la définition 1.1 de \cite{Izq3}:

\begin{definition} \label{def}
Soit $T$ un $K_0$-tore. On dit que $T$ vérifie \textbf{l'approximation faible} si $T(K_0)$ est dense dans $\prod_{v \in X_0^{(1)}} T(K_{0,v})$, où le groupe $T(K_{0,v})$ est muni de la topologie $v$-adique pour chaque $v$. On dit que $T$ vérifie \textbf{l'approximation faible faible} (resp. \textbf{l'approximation faible faible dénombrable}) s'il existe une partie finie (resp. dénombrable) $S_0$ de $X_0^{(1)}$ telle que, pour toute partie finie $S$ de $X_0^{(1)}$ n'intersectant pas $S_0$, le groupe $T(K_0)$ est dense dans $\prod_{v \in S} T(K_{0,v})$. 
\end{definition}

 \hspace{4ex} Exactement comme dans la section 9 de \cite{CTH}, dans la section 3 de \cite{HS2} ou encore dans la section 1 de \cite{Izq3}, on peut comprendre l'obstruction à l'approximation faible:

\begin{theorem} \label{AFC((x,y))}
Soit $T$ un $K_0$-tore de module de caractères $\hat{T}$ et de tore dual $T'$. On note $ \overline{T(K_0)}$ l'adhérence de $T(K_0)$ dans $\prod_{v \in X_0^{(1)}} T(K_{0,v})$. 
\begin{itemize}
\item[(i)] Supposons $d=-1$ (par exemple $k=\mathbb{C}$). On a la suite exacte: $$0 \rightarrow \overline{T(K_0)} \rightarrow \prod_{v \in X_0^{(1)}} T(K_{0,v}) \rightarrow (\Sha^{2}_{\omega}(K_0,\hat{T}))^D \rightarrow (\Sha^{2}(K_0,\hat{T}))^D \rightarrow 0.$$
Le tore $T$ vérifie toujours l'approximation faible faible, et il vérifie l'approximation faible si, et seulement si, $\Sha^{2}(K_0,\hat{T}) = \Sha^{2}_{\omega}(K_0,\hat{T})$.
\item[(ii)] Supposons que $d=0$ (par exemple $k = \mathbb{C}((t))$). On a la suite exacte:
$$0 \rightarrow \overline{T(K_0)} \rightarrow \prod_{v \in X_0^{(1)}} T(K_{0,v}) \rightarrow (\Sha^{2}_{\omega}(K_0,T'))^D \rightarrow (\Sha^{2}(K_0,T'))^D \rightarrow 0.$$
Le tore $T$ vérifie l'approximation faible si, et seulement si, $\Sha^{2}(K_0,T') = \Sha^{2}_{\omega}(K_0,T')$. Le tore $T$ vérifie l'approximation faible faible si, et seulement si, le groupe de torsion $\Sha^{2}_{\omega}(K_0,T')$ est de type cofini.  Le tore $T$ vérifie l'approximation faible faible dénombrable si, et seulement si, le groupe de torsion $\Sha^{2}_{\omega}(K_0,T')$ est dénombrable.
\end{itemize}
\end{theorem}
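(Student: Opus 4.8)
The argument follows the scheme of section 9 of \cite{CTH}, of section 3 of \cite{HS2}, and most closely of section 1 of \cite{Izq3}; I only sketch the steps and indicate where the inputs of the present article enter. I treat the two cases in parallel, writing $M$ for the dual object: $M=\hat{T}$ in case (i) and $M=T'$ in case (ii). The plan rests on two duality ingredients. Locally: for each $v\in X_0^{(1)}$ the field $K_{0,v}$ is a complete discretely valued field whose residue field $k(v)$ is a $(p,d+1)$-bon field, so that, arguing as over the higher local fields of \cite{Izq1}, one gets a pairing
$$T(K_{0,v})^{\wedge}\times H^2(K_{0,v},M)\longrightarrow \mathbb{Q}/\mathbb{Z}$$
which is perfect and continuous for the $v$-adic topology on the left factor and the discrete topology on the right, where $T(K_{0,v})^{\wedge}$ is the relevant completion of $T(K_{0,v})$. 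Globally: the cup products defining these local pairings take values, after pairing $M$ against the appropriate twisted sheaf, in $\operatorname{Br}K_{0,v}$ in case (i) and in $H^{d+3}(K_{0,v},\mathbb{Q}/\mathbb{Z}(d+2))$ in case (ii); hence the Bloch--Ogus complex~(\ref{flat}) resp.~(\ref{flatflat}) plays the role of the reciprocity law, giving $\sum_v\langle P,\xi_v\rangle_v=0$ for every $P\in T(K_0)$ and every $\xi\in H^2(K_0,M)$ that is locally trivial at almost all $v$.

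Next I would set up the adelic pairing
$$\prod_{v\in X_0^{(1)}} T(K_{0,v})\ \times\ \Sha^2_{\omega}(K_0,M)\longrightarrow \mathbb{Q}/\mathbb{Z},\qquad \bigl((P_v)_v,\xi\bigr)\longmapsto \sum_{v}\langle P_v,\xi_v\rangle_v.$$
The sum is finite since $\xi$ is locally trivial outside a finite set and, for $v$ outside a finite set depending on $(P_v)$, the point $P_v$ lies in the image of $\mathcal{T}(\mathcal{O}_v)$ for a chosen smooth model, where the local pairing is trivial. This pairing is continuous in the first variable, it annihilates $T(K_0)$ by the reciprocity statement above, and it annihilates the subgroup $\Sha^2(K_0,M)\subseteq \Sha^2_{\omega}(K_0,M)$ for the trivial reason that such classes vanish at every place; it therefore factors through a map
$$\prod_{v} T(K_{0,v})\big/\overline{T(K_0)}\ \longrightarrow\ \bigl(\Sha^2_{\omega}(K_0,M)/\Sha^2(K_0,M)\bigr)^D.$$

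The heart of the proof is to show that this last map is an isomorphism, which yields the four-term exact sequence of the statement. Exactly as in \cite{Izq3}, I would obtain both injectivity and surjectivity by splicing the local dualities of the first step with the global Poitou--Tate duality of Theorem~\ref{PTC((x,t))tore} --- in case (i) the isomorphism $\Sha^1(K_0,T)\cong\overline{\Sha^2(K_0,\hat{T})}^D$, in case (ii) the isomorphism $\Sha^1(K_0,T)\cong\overline{\Sha^2(K_0,T')}^D$ --- into a Poitou--Tate-type exact sequence relating $\prod_v T(K_{0,v})$, $H^2(K_0,M)^D$ and $\Sha^1(K_0,T)$, and then chasing that sequence, the compatibility of the various pairings with~(\ref{flat}) resp.~(\ref{flatflat}) being checked along the way. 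This is the step I expect to be the main obstacle: no new idea beyond \cite{Izq3} is needed, but the bookkeeping with the two-dimensional local fields $K_{0,v}$ and with the global complex is delicate, one must be careful with the completions $T(K_{0,v})^{\wedge}$, and one must track which torsion subgroups fail to be of cofinite type.

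Finally, the approximation assertions follow formally. The torus $T$ satisfies weak approximation if and only if $\overline{T(K_0)}=\prod_v T(K_{0,v})$, that is, if and only if $\Sha^2_{\omega}(K_0,M)=\Sha^2(K_0,M)$; and $T$ satisfies weak weak approximation (resp.\ countable weak weak approximation) if and only if $\Sha^2_{\omega}(K_0,M)/\Sha^2(K_0,M)$ is finite, resp.\ countable (a finitely supported obstruction can be absorbed into the fixed set $S_0$, and conversely). In case (i), where $\hat{T}$ is a finitely generated Galois module split over a finite extension of $K_0$, the quotient $\Sha^2_{\omega}(K_0,\hat{T})/\Sha^2(K_0,\hat{T})$ is finite --- the analogue for $K_0$ of the classical finiteness over number fields, proved as in \cite{CTH} and \cite{Izq3} by spreading $T$ out over an open $U\subseteq X_0$ and a density argument --- so weak weak approximation always holds. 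In case (ii), since $\Sha^1(K_0,T)_{\text{non}-p}$ is finite by Theorem~\ref{PTC((x,t))tore}(ii) the group $\overline{\Sha^2(K_0,T')}$ is finite, and the stated dichotomies for the torsion group $\Sha^2_{\omega}(K_0,T')$ follow directly from the description of the cokernel.
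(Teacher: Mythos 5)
Your overall plan is the one the paper follows (the scheme of \cite{CTH}, \cite{HS2} and \cite{Izq3}), and your local pairings, adelic pairing and reciprocity via (\ref{flat})/(\ref{flatflat}) are set up correctly; but the two points that carry the real content are respectively deferred and mis-stated. First, the four-term exact sequence is precisely the step you leave open: ``splicing the local dualities with Theorem \ref{PTC((x,t))tore} and chasing'' does not by itself give exactness at $(\Sha^{2}_{\omega}(K_0,M))^D$. The paper's mechanism is concrete: one first proves a Poitou--Tate exact sequence for \emph{finite} modules ($H^1(K_0,F')\rightarrow\mathbb{P}^1(K_0,F')\rightarrow H^1(K_0,F)^D$, then its dual over a finite set $S$ of places), deduces from the case $F=\mu_n$ that $\Sha^2_S(K_0,\mathbb{Z})=\Sha^2(K_0,\mathbb{Z})$ --- hence the theorem for quasi-trivial tori --- and then invokes Ono's lemma to reduce to a torus admitting a resolution $0\rightarrow F\rightarrow R\rightarrow T\rightarrow 0$ with $F$ finite and $R$ quasi-trivial; the sequence for $T$ is extracted from a three-row diagram chase and a passage to the limit over $S$. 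Without this reduction (or an equivalent device) the exactness at $\prod_v T(K_{0,v})$ and at $(\Sha^2_\omega)^D$ is simply not established.

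Second, your translation of the exact sequence into the approximation statements is incorrect in case (ii). Weak weak approximation amounts to the existence of a finite $S_0$ with $\Sha^{2}_{\omega}(K_0,T')=\Sha^{2}_{S_0}(K_0,T')$; this is \emph{not} equivalent to finiteness of $\Sha^{2}_{\omega}/\Sha^{2}$, because for $d=0$ the local groups $H^{2}(K_{0,v},T')$ are only of cofinite type, so a class group supported on a finite set of places can still be infinite --- which is exactly why the theorem's criterion is ``$\Sha^{2}_{\omega}(K_0,T')$ de type cofini'' and not ``le quotient est fini'' (and likewise countable weak weak approximation is governed by countability of $\Sha^{2}_{\omega}$ itself, not of the quotient). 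Your stated equivalences therefore contradict the very statement you are proving, and the bridge between ``obstruction supported on a finite set'' and these group-theoretic conditions is the non-formal part of the argument. Finally, in case (i) the unconditional weak weak approximation is not a ``density argument'': the paper obtains it from a flasque resolution showing $\Sha^{2}_{\omega}(K_0,\hat T)=\Sha^{2}_{S_T}(K_0,\hat T)$ with $S_T$ the finite set of places of bad reduction, combined with the quasi-trivial case. These are the steps where the theorem actually lives, and they are missing from your proposal.
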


\begin{proof} \textit{(Esquisse).}
Voici les grandes étapes pour (i):
\begin{itemize}
\item[1.] Pour $F$ un $\text{Gal}(K_0^s/K_0)$-module fini, on établit une suite exacte:
$$H^{1}(K_0,F') \rightarrow \mathbb{P}^1(K_0,F') \rightarrow H^1(K_0,F)^D$$
où $F' = \text{Hom}(F,\mu_n)$ et $\mathbb{P}^1(K_0,F')$ est le produit restreint des $H^1(K_{0,v},F')$ pour $v \in X^{(1)}$ par rapport aux $H^1(\mathcal{O}_v,F')$.
\item[2.] En dualisant la suite exacte précédente, on montre que, pour $F$ un $\text{Gal}(K_0^s/K_0)$-module fini et $S$ une partie finie de $X_0^{(1)}$, on a une suite exacte:
$$H^1(K_0,F) \rightarrow \prod_{v \in S} H^1(K_{0,v},F) \rightarrow \Sha^1_S(K_0,F')^D \rightarrow \Sha^1(K_0,F')^D \rightarrow 0,$$
où $\Sha^1_S(K_0,M) = \text{Ker}\left( H^1(K_0,M) \rightarrow \prod_{v \in X_0^{(1)} \setminus S} H^1(K_{0,v},M) \right)$ pour chaque module galoisien $M$.
\item[3.] L'exploitation de la suite exacte précédente avec $F=\mu_n$ permet de montrer que $\Sha^2_S(K_0,\mathbb{Z}) = \Sha^2(K_0,\mathbb{Z})$. D'après le théorème \ref{PTC((x,t))tore}, c'est un groupe de torsion de type cofini divisible. De plus, l'égalité $\Sha^2_S(K_0,\mathbb{Z}) = \Sha^2(K_0,\mathbb{Z})$ permet de vérifier que le théorème est vrai pour les tores quasi-triviaux.
\item[4.] Comme le théorème est vrai pour les tores quasi-triviaux, le lemme d'Ono (théorème 1.5.1 de \cite{Ono}) permet de supposer que $T$ admet une résolution:
$$0 \rightarrow F \rightarrow R \rightarrow T \rightarrow 0$$
avec $F$ fini et $R$ tore quasi-trivial. Cela donne un diagramme commutatif:\\
\xymatrix{
& R(K_0) \ar[r] \ar[d] & T(K_0) \ar[r]\ar[d] & H^1(K_0,F) \ar[d]\ar[r] & 0\\
& \prod_{v \in S} R(K_{0,v}) \ar[r]\ar[d] & \prod_{v \in S} T(K_{0,v}) \ar[r]\ar[d]  & \prod_{v \in S} H^1(K_{0,v},F) \ar[d]\ar[r] & 0 \\
0 \ar[r]& \Sha^2_S(K_0,\hat{R})^D \ar[r] & \Sha^2_S(K_0,\hat{T})\ar[r] & \Sha^1_S(K_0,F')\ar[r] & 0.
}\\
Les deux premières lignes sont bien sûr exactes, et on peut montrer que la troisième l'est aussi grâce à l'étape 3. En utilisant encore l'étape 3, une chasse au diagramme permet d'établir une suite exacte:
$$0 \rightarrow \overline{T(K_0)_S} \rightarrow \prod_{v \in S} T(K_{0,v}) \rightarrow (\Sha^2_S(K_0,\hat{T}))^D,$$
où $\overline{T(K_0)_S}$ désigne l'adhérence de $T(K_0)$ dans $\prod_{v \in S} T(K_{0,v})$.
En passant à la limite projective sur $S$, on obtient l'exactitude de:
 $$0 \rightarrow \overline{T(K_0)} \rightarrow \prod_{v \in X_0^{(1)}} T(K_{0,v}) \rightarrow (\Sha^{2}_{\omega}(K_0,\hat{T}))^D .$$
 \item[5.] L'exactitude de $$\prod_{v \in X_0^{(1)}} T(K_{0,v}) \rightarrow (\Sha^{2}_{\omega}(K_0,\hat{T}))^D \rightarrow (\Sha^{2}(K_0,\hat{T}))^D \rightarrow 0$$
 s'obtient en dualisant la suite exacte:
 $$0 \rightarrow \Sha^2(K_0,\hat{T}) \rightarrow \Sha^2_{\omega}(K_0,\hat{T}) \rightarrow \bigoplus_{v \in X_0^{(1)}} H^2(K_{0,v},\hat{T}).$$
 \item[6.] De la suite exacte:
 $$0 \rightarrow \overline{T(K_0)} \rightarrow \prod_{v \in X_0^{(1)}} T(K_{0,v}) \rightarrow (\Sha^{2}_{\omega}(K_0,\hat{T}))^D \rightarrow (\Sha^{2}(K_0,\hat{T}))^D \rightarrow 0$$
 on déduit que $T$ vérifie l'approximation faible si, et seulement si, $\Sha^{2}(K_0,\hat{T}) = \Sha^{2}_{\omega}(K_0,\hat{T})$. et qu'il vérifie l'approximation faible faible si, et seulement si, $\Sha^{2}(K_0,\hat{T})$ est d'indice fini dans $\Sha^{2}_{\omega}(K_0,\hat{T})$. En utilisant une résolution flasque de $T$, on peut montrer que $\Sha^{2}_{\omega}(K_0,\hat{T}) = \Sha^{2}_{S_T}(K_0,\hat{T})$, où $S_T$ désigne l'ensemble des places $v \in X_0^{(1)}$ de mauvaise réduction de $T$. Ainsi, $\Sha^{2}(K_0,\hat{T})$ est toujours d'indice fini dans $\Sha^{2}_{\omega}(K_0,\hat{T})$ et $T$ vérifie l'approximation faible faible.
\end{itemize}
\end{proof}

\begin{remarque}
\begin{itemize}
\item[$\bullet$] La preuve de (ii) est similaire.
\item[$\bullet$] Comme dans la remarque \ref{prob}, on pourrait obtenir un résultat pour l'approximation faible dans le cas où $d$ est quelconque en faisant intervenir les complexes de Bloch.
\end{itemize}
\end{remarque}

\end{document}